\numberwithin{equation}{section}
\theoremstyle{plain}
\newtheorem{theorem}{Theorem}[section]
\newtheorem{corollary}[theorem]{Corollary}
\newtheorem{lemma}[theorem]{Lemma}
\newtheorem{proposition}[theorem]{Proposition}
\newtheorem{conjecture}[theorem]{Conjecture}
\theoremstyle{remark}
\newtheorem{remark}{Remark}[section]
\begin{document}

\date{\today} 

\title[Degenerating sequences of conformal classes]{Degenerating sequences of conformal classes and the conformal Steklov spectrum}


\author{Vladimir Medvedev}

\address{D\'{e}partement de Math\'{e}matiques et de Statistique, Pavillon Andr\'{e}-Aisenstadt, Universit\'{e} de Montr\'{e}al, Montr\'{e}al, QC, H3C 3J7, Canada \newline {\em and}
\newline Peoples' Friendship University of Russia (RUDN University), 6 Miklukho-Maklaya Street, Moscow, 117198, Russian Federation \newline {\em and}
\newline Faculty of Mathematics, National Research University Higher School of Economics, 6 Usacheva Street, Moscow, 119048, Russian Federation}

\email{medvedevv@dms.umontreal.ca}




\begin{abstract}
Let $\Sigma$ be a compact surface with boundary. For a given conformal class $c$ on $\Sigma$ the functional $\sigma_k^*(\Sigma,c)$ is defined as the supremum of the $k-$th normalized Steklov eigenvalue over all metrics in $c$. We consider the behaviour of this functional on the moduli space of conformal classes on $\Sigma$. A precise formula for the limit of $\sigma_k^*(\Sigma,c_n)$ when the sequence $\{c_n\}$ degenerates is obtained. We apply this formula to the study of natural analogs of the Friedlander-Nadirashvili invariants of closed manifolds defined as $\inf_{c}\sigma_k^*(\Sigma,c)$, where the infimum is taken over all conformal classes $c$ on $\Sigma$. We show that these quantities are equal to $2\pi k$ for any surface with boundary. As an application of our techniques we obtain new estimates on the $k-$th normalized Steklov eigenvalue of a non-orientable surface in terms of its genus and the number of boundary components. 
\end{abstract}

\maketitle


\newcommand\cont{\operatorname{cont}}
\newcommand\diff{\operatorname{diff}}

\newcommand{\dvol}{\text{dA}}
\newcommand{\GL}{\operatorname{GL}}
\newcommand{\myO}{\operatorname{O}}
\newcommand{\myP}{\operatorname{P}}
\newcommand{\eye}{\operatorname{Id}}
\newcommand{\myF}{\operatorname{F}}
\newcommand{\Vol}{\operatorname{Vol}}
\newcommand{\odd}{\operatorname{odd}}
\newcommand{\even}{\operatorname{even}}
\newcommand{\ol}{\overline}
\newcommand{\mye}{\operatorname{E}}
\newcommand{\myo}{\operatorname{o}}
\newcommand{\myt}{\operatorname{t}}
\newcommand{\irr}{\operatorname{Irr}}
\newcommand{\mydiv}{\operatorname{div}}
\newcommand{\re}{\operatorname{Re}}
\newcommand{\im}{\operatorname{Im}}
\newcommand{\can}{\operatorname{can}}
\newcommand{\scal}{\operatorname{scal}}
\newcommand{\tr}{\operatorname{trace}}
\newcommand{\sgn}{\operatorname{sgn}}
\newcommand{\SL}{\operatorname{SL}}
\newcommand{\myspan}{\operatorname{span}}
\newcommand{\mydet}{\operatorname{det}}
\newcommand{\SO}{\operatorname{SO}}
\newcommand{\SU}{\operatorname{SU}}
\newcommand{\specl}{\operatorname{spec_{\mathcal{L}}}}
\newcommand{\fix}{\operatorname{Fix}}
\newcommand{\id}{\operatorname{id}}
\newcommand{\grad}{\operatorname{grad}}
\newcommand{\singsup}{\operatorname{singsupp}}
\newcommand{\wave}{\operatorname{wave}}
\newcommand{\ind}{\operatorname{ind}}
\newcommand{\mynull}{\operatorname{null}}
\newcommand{\inj}{\operatorname{inj}}
\newcommand{\arcsinh}{\operatorname{arcsinh}}
\newcommand{\Spec}{\operatorname{Spec}}
\newcommand{\Ind}{\operatorname{Ind}}
\newcommand{\floor}[1]{\left \lfloor #1  \right \rfloor}

\newcommand\restr[2]{{
  \left.\kern-\nulldelimiterspace 
  #1 
  \vphantom{\big|} 
  \right|_{#2} 
  }}


\section{Introduction and main results}

Let $(\Sigma,g)$ be a compact Riemannian surface with boundary. In this paper we always assume that $\Sigma$ is connected and the boundary of $\Sigma$ is non-empty and smooth. Consider \textit{the Steklov problem} defined in the following way
\begin{gather*}
\begin{cases}
\Delta u=0&\text{in $\Sigma$},\\
\frac{\partial u}{\partial n}=\sigma u&\text{on $\partial \Sigma$},
\end{cases}
\end{gather*}
where $\Delta=-\mydiv_g \circ \grad_g$ is the Laplace-Beltrami operator and $\frac{\partial}{\partial n}$ is the outward unit normal vector field along the boundary. The collection of all numbers $\sigma$ for which the Steklov problem admits a solution is called the \textit{Steklov spectrum} of the surface $\Sigma$. The Steklov spectrum is a discrete set of real numbers called Steklov eigenvalues with finite multiplicities satisfying the following condition (see e.g. \cite{girouard2017spectral})

\begin{align*}
0=\sigma_0(g) < \sigma_1(g) \leq \sigma_2(g) \leq\ldots\nearrow +\infty.
\end{align*}

The Steklov spectrum enables us to define the following homothety-invariant functional on the set $\mathcal{R}(\Sigma)$ of Riemannian metrics on $\Sigma$
\begin{align*}
\overline{\sigma}_k(\Sigma,g):=\sigma_k(g)L_g(\partial \Sigma),
\end{align*}
where $L_g(\partial \Sigma)$ stands for the length of the boundary of $\Sigma$ in the metric $g$. The functional $\overline{\sigma}_k(\Sigma,g)$ is called the \textit{$k-$th normalized Steklov eigenvalue}. It was shown in~\cite{colbois2011isoperimetric} (see also~\cite{hassannezhad2011, kokarev2014variational}) that if $\Sigma$ is an orientable surface then the functional $\overline{\sigma}_k(\Sigma,g)$ is bounded from above. Moreover, the following theorem holds


\begin{theorem}[\cite{girouard2012upper}]
\label{Kor}
Let $(\Sigma,g)$ be a compact orientable surface of genus $\gamma$ with $l$ boundary components. Then one has
\begin{gather*}
\overline{\sigma}_k(\Sigma,g) \leq 2\pi k(\gamma+l).
\end{gather*}
\end{theorem}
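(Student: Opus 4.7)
The plan is to exploit the existence of a low-degree holomorphic map from $\Sigma$ to the unit disk $\mathbb{D}\subset\mathbb{C}$, combined with the conformal invariance of the Dirichlet energy in two dimensions and the variational characterization of Steklov eigenvalues. The key external input is a classical theorem of Ahlfors (later sharpened by Gabard): any compact orientable Riemannian surface of genus $\gamma$ with $l$ boundary components admits a proper holomorphic map $\phi\colon\Sigma\to\mathbb{D}$ of degree $d\leq\gamma+l$ sending $\partial\Sigma$ to $\partial\mathbb{D}$. Equipping $\Sigma$ with the conformal structure of $g$ turns $\phi$ into a conformal branched cover; conformal invariance in dimension two then gives $\int_\Sigma|\nabla(u\circ\phi)|_g^2\,dA_g = d\int_{\mathbb{D}}|\nabla u|^2\,dA$ for any $u\in H^1(\mathbb{D})$.

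For the base case $k=1$, I would apply Hersch's renormalization: the Möbius group of $\mathbb{D}$ is three-dimensional, and a Brouwer-type degree argument produces a Möbius transformation $\tau$ of $\mathbb{D}$ such that each coordinate of $\tau\circ\phi\colon\Sigma\to\mathbb{D}\subset\mathbb{R}^2$ has zero mean on $\partial\Sigma$ against the boundary arclength. Both coordinates are then admissible test functions for $\sigma_1$. Inserting them into the min-max characterization, using $x_1^2+x_2^2\leq 1$ on $\mathbb{D}$ to bound the denominator by $L_g(\partial\Sigma)$ and conformal invariance to compute the numerator as $2\pi d$, gives $\overline{\sigma}_1(\Sigma,g)\leq 2\pi(\gamma+l)$.

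For general $k$ the plan is a partition/concentration argument. One selects $k+1$ pairwise disjoint ``caps'' in $\mathbb{D}$, chosen so that each carries a uniformly non-trivial share of the pushforward boundary measure $\phi_*(dL_g)$ on $\partial\mathbb{D}$, and attaches to each cap a cutoff test function supported on its $\phi$-preimage. Disjointness of supports yields pairwise $L^2(\partial\Sigma)$-orthogonality and an additive Dirichlet energy bound; the resulting $(k+1)$-dimensional subspace of $H^1(\Sigma)$, inserted into the min-max, produces $\overline{\sigma}_k(\Sigma,g)\leq 2\pi k(\gamma+l)$, with the factor $k$ arising from the number of caps and the factor $2\pi(\gamma+l)$ from the per-cap conformal energy estimate.

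The principal obstacle is the combinatorial construction of the caps: one must simultaneously arrange disjointness and a uniform lower bound on the boundary mass each cap carries. The standard remedy is a Grigor'yan--Netrusov--Yau-type covering lemma applied to the metric measure space $(\partial\mathbb{D},\phi_*(dL_g))$, which yields the required $k+1$ annular regions with well-separated thickenings. Ramification points of $\phi$ require only local care since they do not contribute to the Dirichlet energy, and quantifying the per-cap energy and summing to $2\pi kd$ is the main technical computation.
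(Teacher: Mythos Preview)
This theorem is not proved in the paper under review; it is quoted from \cite{girouard2012upper} and used only as a black box (for instance in Claim~2 of Lemma~\ref{liminf}). There is therefore no in-paper argument to compare your proposal against.

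On its own merits: your $k=1$ argument is exactly the standard one, and the Ahlfors--Gabard input is correct. The issue is with general $k$. A Grigor'yan--Netrusov--Yau covering argument of the type you sketch will indeed produce $k+1$ disjointly supported test functions and a bound of the shape $\overline\sigma_k(\Sigma,g)\leq C\,k(\gamma+l)$, but the constant $C$ coming out of such a decomposition is a non-explicit universal number, not $2\pi$. The phrase ``the factor $2\pi(\gamma+l)$ from the per-cap conformal energy estimate'' hides the problem: once you localize to a cap by a cutoff, the Dirichlet energy of the cutoff itself contributes, and the pointwise bound $x_1^2+x_2^2\leq 1$ on $\mathbb D$ no longer saturates on a single cap, so the clean $2\pi d$ computation from the $k=1$ case does not survive.

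The proof in \cite{girouard2012upper} avoids this by adapting the Hersch--Payne--Schiffer construction rather than a GNY covering: one places $2k$ points on $\partial\mathbb D$ so that the arcs between them carry equal mass for the pushforward measure $\phi_*(ds_g)$, builds a one-parameter family of ``hinged'' test functions changing sign at consecutive points, and uses a topological (degree/linking) argument to select a member of the family orthogonal to the first $k-1$ eigenfunctions. That construction recovers the exact constant $2\pi k d\leq 2\pi k(\gamma+l)$. If you want the sharp statement as written, you should replace the GNY step by this Hersch--Payne--Schiffer machinery; alternatively, one can bypass it by pushing the metric forward to $\mathbb D^2$ (as the present paper does in Section~\ref{appendix4} for the non-orientable case) and invoking the known equality $\sigma^*_k(\mathbb D^2)=2\pi k$ directly.
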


In this paper we prove that a similar estimate holds for non-orientable surfaces.

\begin{theorem}\label{non-bound}
Let $\Sigma$ be a compact non-orientable surface of genus $\gamma$ with $l$ boundary components. Then one has 
\begin{gather*}
\overline{\sigma}_k(\Sigma,g) \leq 4\pi k(\gamma+2l).
\end{gather*}
\end{theorem}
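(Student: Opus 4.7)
The plan is to reduce to the orientable case via the orientable double cover. Let $\pi:\tilde\Sigma\to\Sigma$ be the orientable double cover with covering involution $\tau$, and set $\tilde g=\pi^*g$. Every boundary circle of $\Sigma$ has an orientable collar, so it lifts to two disjoint circles of $\tilde\Sigma$ swapped by $\tau$; hence $\tilde l=2l$, and combining with $\chi(\tilde\Sigma)=2\chi(\Sigma)=4-2\gamma-2l$ forces the orientable genus to be $\tilde\gamma=\gamma-1$. Consequently $L_{\tilde g}(\partial\tilde\Sigma)=2L_g(\partial\Sigma)$, and the pullback $\pi^*$ identifies the Steklov spectrum of $(\Sigma,g)$ with the spectrum of the Steklov operator of $(\tilde\Sigma,\tilde g)$ restricted to the $\tau$-invariant subspace of $H^1(\tilde\Sigma)$.

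Applying Theorem~\ref{Kor} to the orientable surface $(\tilde\Sigma,\tilde g)$ yields $\overline{\sigma}_j(\tilde\Sigma,\tilde g)\le 2\pi j(\gamma+2l-1)$ for every $j\ge 0$. It therefore suffices to prove the spectral comparison
\[
\sigma_k(\Sigma,g)\le\sigma_{4k}(\tilde\Sigma,\tilde g),
\]
since combining this with the bound above and the length relation $L_{\tilde g}=2L_g$ gives $\overline{\sigma}_k(\Sigma,g)\le\tfrac{1}{2}\overline{\sigma}_{4k}(\tilde\Sigma,\tilde g)\le 4\pi k(\gamma+2l-1)\le 4\pi k(\gamma+2l)$. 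To prove the comparison, let $V_{4k}\subset H^1(\tilde\Sigma)$ be the span of the first $4k+1$ Steklov eigenfunctions of $(\tilde\Sigma,\tilde g)$. Because the Steklov operator commutes with $\tau$, each eigenspace, and hence $V_{4k}$ itself, is a $\tau$-invariant subspace, and it splits orthogonally, with respect to both the Dirichlet form and the boundary $L^2$ form, as $V_{4k}=V_{4k}^+\oplus V_{4k}^-$. On $V_{4k}^+$ the Rayleigh quotient is bounded by $\sigma_{4k}(\tilde g)$, and $V_{4k}^+$ descends via $\pi$ to a test subspace in $H^1(\Sigma)$ of the same dimension. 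The min-max characterisation of $\sigma_k(\Sigma,g)$ then yields the comparison as soon as $\dim V_{4k}^+\ge k+1$.

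The main obstacle is guaranteeing the dimension bound $\dim V_{4k}^+\ge k+1$. A naive pigeonhole on the identity $\dim V_{4k}^+ + \dim V_{4k}^- = 4k+1$ only forces $\max(\dim V_{4k}^+,\dim V_{4k}^-)\ge 2k+1$, and a priori the majority could lie on the $\tau$-anti-invariant side. In the favourable case this pigeonhole already gives $\dim V_{4k}^+\ge 2k+1\ge k+1$. In the unfavourable case $\dim V_{4k}^+\le k$ there are at least $3k+1$ linearly independent $\tau$-anti-invariant Steklov eigenfunctions of $(\tilde\Sigma,\tilde g)$ with eigenvalue $\le\sigma_{4k}(\tilde g)$, and I would construct the missing $\tau$-invariant test functions from them by exploiting that the product of two $\tau$-anti-invariant functions is $\tau$-invariant: taking harmonic extensions of suitably orthogonalised quadratic expressions in these anti-invariant eigenfunctions, controlled via a second-moment argument in the spirit of~\cite{hassannezhad2011, kokarev2014variational}, supplies the required $\tau$-invariant harmonic test functions with Rayleigh quotient bounded by $\sigma_{4k}(\tilde g)$. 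The quadratic cost of pairing anti-invariant modes into invariant ones is precisely what degrades the constant from $2\pi$ (in Theorem~\ref{Kor}) to $4\pi$, and replaces $l$ with $2l$, in the claimed bound.
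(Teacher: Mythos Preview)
Your approach has a genuine gap in the ``unfavourable case''. The spectral comparison $\sigma_k(\Sigma,g)\le\sigma_{4k}(\tilde\Sigma,\tilde g)$ is not a general fact: the $\tau$-even Steklov spectrum of $(\tilde\Sigma,\tilde g)$ is exactly the Steklov spectrum of $(\Sigma,g)$, but there is no a priori reason for $k+1$ of the first $4k+1$ eigenvalues of $\tilde\Sigma$ to be $\tau$-even. The constant function accounts for one, and nothing rules out the remaining $4k$ being $\tau$-odd. Your proposed repair --- forming $\tau$-invariant test functions as products of $\tau$-anti-invariant eigenfunctions --- does not work as stated: products of harmonic functions are not harmonic, and more importantly the Dirichlet energy of $uv$ involves $\int u^2|\nabla v|^2$, which is not controlled by the individual Rayleigh quotients without pointwise bounds you do not have. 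The references you cite do not carry out any such construction; the phrase ``second-moment argument in the spirit of'' is not a proof. There is also a minor convention slip: in this paper the genus of a non-orientable surface is \emph{defined} to be the genus of its orientable cover, so $\tilde\gamma=\gamma$, not $\gamma-1$.

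The paper's argument avoids the even/odd counting problem entirely by working not with eigenfunctions of $\tilde\Sigma$ but with pullbacks from a disc. One takes a proper conformal branched cover $\psi:\tilde\Sigma\to\mathbb{D}^2$ that intertwines $\tau$ with complex conjugation $\iota$ on $\mathbb{D}^2$ (such a map of degree $\le 2(\gamma+2l)$ is built from Gabard's theorem by setting $F(x)=f(x)\overline{f(\tau(x))}$). Pulling back $\iota$-even functions on $\mathbb{D}^2$ yields $\tau$-even functions on $\tilde\Sigma$, hence test functions on $\Sigma$, and one obtains $\overline\sigma_k(\Sigma,h)\le d\,\overline\sigma_k^N(H\mathbb{D}^2,\partial^S H\mathbb{D}^2,g^*)$ for the pushed-forward metric $g^*$. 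The Steklov--Neumann quantity on the half-disc is then bounded by $\sigma_k^*(\mathbb{D}^2)=2\pi k$ via the subdomain monotonicity of Proposition~\ref{subdomain}. The point is that on the disc target one has explicit control of the even spectrum through the half-disc Steklov--Neumann problem, which is exactly what your double-cover argument lacks.
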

Here the genus of a non-orientable surface is defined as the genus of its orientable cover. 

\begin{remark}
The estimate in Theorem~\ref{Kor} has been improved in~\cite{karpukhin2015bounds} by a bound which is linear in $k+\gamma+l$ instead of $k(\gamma+l)$. However, the proof of this result uses orientability in an essential way, see \cite[Section 6]{karpukhin2015bounds}. It would be interesting to obtain a similar improvement in Theorem~\ref{non-bound}.
\end{remark}

Theorems~\ref{Kor} and~\ref{non-bound} enable us to define the following functionals
$$
\sigma^*_k(\Sigma):=\sup_{\mathcal{R}(\Sigma)} \overline{\sigma}_k(\Sigma,g),
$$
and
$$
\sigma^*_k(\Sigma,[g]):=\sup_{[g]} \overline{\sigma}_k(\Sigma,g).
$$

\begin{remark}
Note that we cannot define the functionals $\sigma^*_k(\Sigma)$ and $\sigma^*_k(\Sigma,[g])$ in higher dimensions. Indeed, it was proved in the paper~\cite{colbois2019compact} that if $n=\dim M \geq 3$ then the functional $\overline{\sigma}_k(M,g):=\sigma_k(g)Vol(\partial M, g)^{1/(n-1)}$, where $Vol(\partial M, g)$ denotes the volume of the boundary with respect to the metric $g$, is not bounded from above on the set of Riemannian metrics $\mathcal R(M)$. Moreover, it is not even bounded from above in the conformal class $[g]$.  
\end{remark}

The functional $\sigma^*_k(\Sigma)$ is an object of intensive research during the last decade (see e.g. \cite{fraser2011first, fraser2016sharp, colbois2016steklov, petrides2019maximizing, girouard-lagace, matpet}). 



The functional $\sigma^*_k(\Sigma,[g])$ which is called the \textit{$k-$th conformal Steklov eigenvalue} is less studied. Let us mention some results concerning $\sigma^*_k(\Sigma,[g])$. First since the disc admits the unique conformal structure one can conclude that $\sigma^*_k(\mathbb D^2,[g_{can}])=\sigma^*_k(\mathbb D^2),$ where $g_{can}$ stands for the Euclidean metric on $\mathbb D^2$ with unit boundary length. The value of $\sigma^*_k(\mathbb D^2)$ is known: $\sigma^*_k(\mathbb D^2)=2\pi k$ (see \cite{weinstock1954inequalities} for $k=1$ and \cite{girouard2010hersch} for all $k \geq 1$). Let us also mention the resent paper \cite{fraser2020some}, where the authors particularly obtain new results about the functional $\sigma^*_k(\mathbb D^2)$. 

The functional $\sigma^*_k(\Sigma,[g])$ is the main research object of the paper \cite{petrides2019maximizing}.

\begin{theorem}[\cite{petrides2019maximizing}]\label{petridesmax}
For every Riemannian metric $g$ on a compact surface $\Sigma$ with boundary one has
\begin{align}\label{petya}
\sigma^*_k(\Sigma,[g]) \geq \sigma^*_{k-1}(\Sigma,[g])+\sigma^*_1(\mathbb{D}^2, [g_{can}]),
\end{align}
particularly
\begin{align}\label{el soufi}
\sigma^*_k(\Sigma,[g]) \geq 2\pi k.
\end{align}
Moreover, if the inequality~\eqref{petya} is strict then there exists a Riemannian metric $\tilde g\in [g]$ such that $\overline\sigma_k(\Sigma,\tilde g)=\sigma^*_k(\Sigma,[g])$. 
\end{theorem}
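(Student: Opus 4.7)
The plan is to prove the inequality~(\ref{petya}) by conformally transplanting a near-optimal disc metric onto a near-optimal metric for $\sigma^*_{k-1}$ via a ``bubble'' concentrated near a boundary point, and then to deduce the attainment statement under strict inequality by a concentration-compactness analysis along a maximizing sequence. For the inequality itself, fix $\epsilon>0$ and pick $g_1\in[g]$ with $L_{g_1}(\partial\Sigma)=1$ and $\overline{\sigma}_{k-1}(\Sigma,g_1)\geq\sigma^*_{k-1}(\Sigma,[g])-\epsilon$, together with a metric $h$ on $\mathbb{D}^2$ in $[g_{can}]$ with $L_h(\partial\mathbb{D}^2)=1$ and $\overline{\sigma}_1(\mathbb{D}^2,h)\geq 2\pi-\epsilon$. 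Choose a boundary point $p\in\partial\Sigma$ at which the first $(k-1)$ Steklov eigenfunctions of $g_1$ are well behaved. In half-disc isothermal coordinates at $p$, transplant a conformally rescaled copy of $h$ into a small half-disc of radius $t$, producing a metric $g_t\in[g]$ that agrees essentially with the transplanted $h$ near $p$ and with $g_1$ outside a slightly larger half-disc, smoothly interpolated by a conformal cutoff. Apply the min-max characterization of $\sigma_k$ to the $(k+1)$-dimensional test space spanned by the constants, the $(k-1)$ eigenfunctions of $g_1$ cut off outside the bubble, and the first nontrivial eigenfunction of $h$ transplanted into the bubble; each family contributes its own Rayleigh quotient up to lower-order cross terms, since the transplanted functions have small $H^1$ mass outside their supports and the two families are nearly $L^2(\partial\Sigma,g_t)$-orthogonal. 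Rescaling to unit boundary length and letting $t,\epsilon\to 0$ yields~(\ref{petya}); the bound~(\ref{el soufi}) follows by induction on $k$ using $\sigma^*_0=0$.

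For the attainment statement, take a maximizing sequence $g_n\in[g]$ with $L_{g_n}(\partial\Sigma)=1$ and $\overline{\sigma}_k(\Sigma,g_n)\to\sigma^*_k(\Sigma,[g])$, and extract a weak-$*$ subsequential limit $\mu$ of the boundary measures $ds_{g_n}$ on $\partial\Sigma$. If $\mu$ has no atoms, a Kokarev-type variational argument (available because the conformal structure is fixed, so no modulus is lost in the limit) gives a metric $\tilde g\in[g]$ realizing the supremum. If instead $\mu$ has atoms $\sum_j\alpha_j\delta_{p_j}$ of total mass $\alpha\in(0,1]$, each atom corresponds to a boundary ``bubble'' which, upon rescaling around $p_j$, converges conformally to a metric on $\mathbb{D}^2$; by the sharp Weinstock bound $\sigma^*_1(\mathbb{D}^2,[g_{can}])=2\pi$, each bubble contributes at most $2\pi\alpha_j$ to the normalized spectral mass, while the atom-free residual contributes at most $(1-\alpha)\,\sigma^*_{k-1}(\Sigma,[g])$ after relabeling. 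A careful accounting of how the $k$-th eigenvalue distributes between the residual surface and the bubbles then yields $\sigma^*_k(\Sigma,[g])\leq\sigma^*_{k-1}(\Sigma,[g])+2\pi$, which contradicts the strict inequality hypothesis. Hence concentration cannot occur, and the maximizer $\tilde g$ exists.

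The main obstacle is the concentration-compactness analysis: identifying that the only possible degeneration inside a fixed conformal class is boundary bubbling, proving the sharp bound $2\pi\alpha_j$ on the contribution of each bubble independently of the detailed behavior of the eigenfunctions there, and giving a clean bookkeeping of how the $k$-th eigenvalue splits between the residual surface and the bubbles in the weak limit. This requires uniform $L^\infty$ and $H^1$ control of Steklov eigenfunctions under measure concentration and a delicate matching of min-max test spaces on the residual and bubble sides, which is the technical heart of the argument.
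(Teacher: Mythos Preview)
This theorem is not proved in the present paper; it is quoted from \cite{petrides2019maximizing} and used as a black box throughout (see the statement header and the absence of any proof following it). So there is no ``paper's own proof'' to compare your proposal against here.

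That said, your sketch is a reasonable outline of the strategy actually carried out in \cite{petrides2019maximizing}: the inequality~\eqref{petya} comes from a gluing/bubble construction attaching a near-optimal disc metric at a boundary point, and the existence of a maximizer under strict inequality comes from a concentration-compactness argument on a maximizing sequence, where failure of compactness forces a disc bubble and hence the equality case of~\eqref{petya}. Be aware, however, that your proposal is only a high-level plan; the genuine difficulties you flag at the end (uniform eigenfunction control under measure degeneration, the precise accounting of how the index $k$ splits between residual and bubbles, and the regularity of the limiting maximizer) are exactly where the real work lies, and Petrides' paper handles them via a careful bubble-tree analysis for free-boundary harmonic maps rather than the more schematic ``atom vs.\ atom-free'' dichotomy you describe. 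In particular, your claim that the atom-free residual contributes at most $(1-\alpha)\,\sigma^*_{k-1}(\Sigma,[g])$ is not quite the right bookkeeping: one needs to track which eigenvalue index survives on the residual piece, and this requires the test-function constructions from \cite{petrides2019maximizing} (Claims 16--17 there, referenced in Section~\ref{proofconf&conv} of the present paper).
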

New interesting results about the functional $\sigma^*_k(\Sigma,[g])$ were recently obtained in the paper~\cite{karpukhin-stern}.

\begin{remark}
The result analogous to Theorem~\ref{petridesmax} for the conformal spectrum of the Laplace-Beltrami operator on closed surfaces also holds (see \cite{nadirashvili2015conformal, MR3438833, petrides2014existence, petrides2018existence, karpukhin2020conformally}). For further information concerning the spectrum of the Laplace-Beltrami operator on closed surfaces see the surveys~\cite{MR3203194, MR4017613} and references therein.
\end{remark}

It is easy to see that the connection between the functionals $\sigma^*_k(\Sigma)$ and $\sigma^*_k(\Sigma,[g])$ is expressed by the formula  
\begin{align*}
\sigma^*_k(\Sigma)=\sup_{[g]} \sigma^*_k(\Sigma,[g]).
\end{align*}
One can ask what do we get if we replace $\sup_{[g]}$ by $\inf_{[g]}$ in this formula? In this case we get the following quantity 
\begin{align*}
I^\sigma_k(\Sigma):=\inf_{[g]} \sigma^*_k(\Sigma,[g]),
\end{align*}
It is an analog of the Friedlander-Nadirashvili invariant of closed manifolds. The first Friedlander-Nadirashvili invariant of a closed manifold was introduced in the paper \cite{MR1717641} in 1999. The $k-$th Nadirashvili-Friedlander invariant of a closed surface has been recently studied in the paper \cite{karpukhin2019friedlander}.




\begin{figure}[h!]
  \centering
  \def\svgwidth{\columnwidth}
  \includegraphics[scale=0.5]{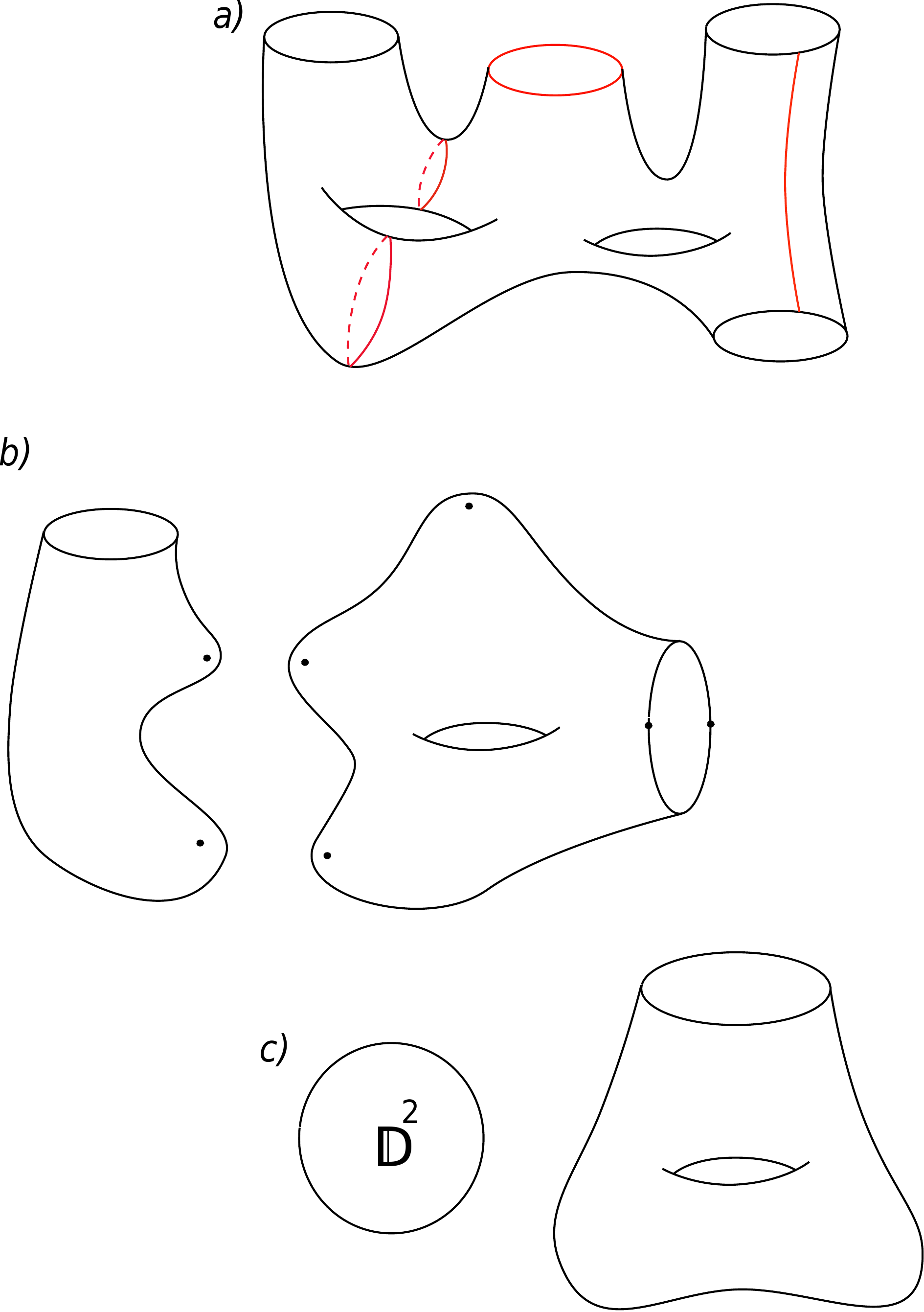}
    \footnotesize
    \caption{
   An example of a degenerating sequence of conformal classes $\{c_n\}$ on a surface $\Sigma$ of genus $2$ with $4$ boundary components. $a)$ The \textit{red} curves correspond to collapsing geodesics for the sequence of metrics of constant Gauss curvature and geodesic boundary $\{h_n\}, ~h_n\in c_n$ corresponding to the degenerating sequence of conformal classes $\{c_n\}$. $b)$ The compactified limiting space $\widehat{\Sigma_\infty}$ (see Section~\ref{geometry}). The black points correspond to the points of compactification. $c)$ The surface $\widehat{\Sigma_\infty}$ is homeomorphic to the disjoint union of a disc and a surface of genus $1$ with $1$ boundary component. 
    }
    \label{F}
\end{figure}

In the study of functionals like $\sigma^*_k(\Sigma)$ and $I^\sigma_k(\Sigma)$ one considers maximizing and minimizing sequences of conformal classes $\{c_n\}$ on the \textit{moduli space of conformal classes on $\Sigma$}, i.e. $\sigma^*_k(\Sigma,c_n) \to \sigma^*_k(\Sigma)$ or $\sigma^*_k(\Sigma,c_n) \to I^\sigma_k(\Sigma)$ as $n\to\infty$. Due to the Uniformization theorem conformal classes on $\Sigma$ are in one-to-one correspondence (up to an isometry) with metrics on $\Sigma$ of constant Gauss curvature and geodesic boundary. Therefore, any sequence of conformal classes $\{c_n\}$ on $\Sigma$ corresponds to a sequence of Riemannian surfaces of constant Gauss curvature and geodesic boundary $\{(\Sigma,h_n)\},~h_n\in c_n$ and we can consider the moduli space of conformal classes on $\Sigma$ as the set of all $(\Sigma,h)$, where $h$ is a metric of constant Gauss curvature and geodesic boundary, endowed with $C^\infty-$topology (see Section~\ref{geometry}). Note that the moduli space of conformal structures is a non-compact topological space. For any sequence $\{c_n\}$ there are two possible scenarios: either this sequence remains in a compact part of the moduli space or it escapes to infinity. Let $(\Sigma_\infty, c_\infty)$ denote the \textit{limiting space}, i.e. $(\Sigma_\infty, c_\infty)=\lim_{n\to\infty}(\Sigma,c_n)$. We compactify $\Sigma_\infty$ if necessary. Let $\widehat{\Sigma_\infty}$ denote the compactified limiting space. It turns out that if the first scenario realizes then we get $\widehat{\Sigma_\infty}=\Sigma$ and $c_\infty$ is a genuine conformal class on $\Sigma$ for which the value $\sigma^*_k(\Sigma)$ or $I^\sigma_k(\Sigma)$ is attained. If the second scenario realizes then we say that the sequence $\{c_n\}$ \textit{degenerates}. It turns out that in this case there exists a finite collection of pairwise disjoint geodesics for the metrics $h_n$ whose lengths in $h_n$ tend to $0$ as $n$ tends to $\infty$. We refer to these geodesics as \textit{pinching} or \textit{collapsing}. They can be of the following three types: the collapsing boundary components, the collapsing geodesics with no self-intersection crossing the boundary $\partial\Sigma$ at two points and the collapsing geodesics with no self-intersection which do not cross $\partial\Sigma$. Note that in this case the topology of $\Sigma$ necessarily changes when we pass to the limit as $n\to\infty$, i.e. the compact surfaces $\widehat\Sigma_\infty$ and $\Sigma$ are of different topological types. In particular, the surface $\widehat\Sigma_\infty$ can be disconnected (see Figure~\ref{F}). We refer to Section~\ref{geometry} for more details. 

The following theorem establishes the correspondence between $\sigma^*_k(\widehat\Sigma_\infty,c_\infty)$ and the limit of $\sigma^*_k(\Sigma,c_n)$ when the sequence of conformal classes $c_n$ degenerates (see Section~\ref{geometry} for the definition). It is an analog of \cite[Theorem 2.8]{karpukhin2019friedlander} for the Steklov setting.

\begin{theorem}\label{conf&conv}
Let $\Sigma$ be a compact surface of genus $\gamma$ with $l>0$ boundary components and let $c_n\to c_\infty$ be a degenerating sequence of conformal classes. Consider the corresponding sequence $\{h_n\}$ of metrics of constant Gauss curvature and geodesic boundary. Suppose that there exist $s_1$ collapsing boundary components and $s_2$ collapsing geodesics with no self-intersection which cross the boundary at two points. Moreover, suppose that $\widehat{\Sigma_{\infty}}$ has $m$ connected components $\Sigma_{\gamma_{i},l_i}$ of genus $\gamma_i$ with $l_i>0$ boundary components, $\gamma_i+l_i<\gamma+l$, $i=1,\ldots,m$. Then one has

\begin{equation}
\label{deg_limit}
\begin{split}
\lim_{n \to \infty} \sigma^*_k (\Sigma, c_n)= \max \Big(\sum^{m}_{i=1} \sigma^*_{k_i}(\Sigma_{\gamma_{i},l_i}, c_\infty)+\sum_{i=1}^{s_1+s_2}\sigma^*_{r_i}(\mathbb{D}^2)\Big),
\end{split}
\end{equation}
where the maximum is taken over all possible combinations of indices such that
 $$
 \sum_{i=1}^{m} k_i + \sum_{i=1}^{s_1+s_2} r_i = k.
 $$
\end{theorem}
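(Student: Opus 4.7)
The strategy is to adapt the argument used for the Laplace spectrum on closed surfaces in \cite{karpukhin2019friedlander} to the Steklov setting on surfaces with boundary. The geometric input is the thick-thin decomposition of the hyperbolic metric $h_n$ corresponding to $c_n$: for $n$ large, the complement of a finite family of thin collars converges smoothly on compact sets to $\widehat{\Sigma_\infty}$ with the conformal class $c_\infty$, while each thin collar has conformal modulus tending to infinity. A collar around a pinching interior geodesic is a long cylinder glued to two smooth limit pieces (possibly the same); a collar around a collapsing boundary component is a long annulus one of whose boundary circles lies on $\partial\Sigma$; and a collar around a pinching arc crossing $\partial\Sigma$ twice is a long ``rectangle'' whose two long sides sit on $\partial\Sigma$. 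In the latter two cases the piece cut off on the pinching side is conformally a disc, which accounts for the $s_1+s_2$ disc summands in \eqref{deg_limit}, while in the interior case both sides contribute to the connected components of $\widehat{\Sigma_\infty}$.

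For the lower bound I fix $\varepsilon>0$ and a partition $k=\sum k_i+\sum r_j$, choose metrics $g_i\in c_\infty|_{\Sigma_{\gamma_i,l_i}}$ with $\overline{\sigma}_{k_i}(\Sigma_{\gamma_i,l_i},g_i)\geq \sigma^*_{k_i}(\Sigma_{\gamma_i,l_i},c_\infty)-\varepsilon$, and disc metrics realizing $\sigma^*_{r_j}(\mathbb D^2)-\varepsilon$, normalized so that $\sum L_{g_i}(\partial)+\sum L_{\mathrm{disc}}(\partial)=1$. I then glue these pieces across the thin collars to form a metric $\tilde g_n\in c_n$. Transplanting the nearly-maximizing eigenfunctions to $\Sigma$ via cutoffs supported away from the necks yields $k+1$ test functions for the Steklov problem on $(\Sigma,\tilde g_n)$ whose Rayleigh quotients are close to the prescribed individual Steklov eigenvalues and which are almost orthogonal in $L^2(\partial\Sigma,\tilde g_n)$. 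The min-max principle then gives $\sigma_k^*(\Sigma,c_n)\geq \sum \sigma^*_{k_i}(\Sigma_{\gamma_i,l_i},c_\infty)+\sum \sigma^*_{r_j}(\mathbb D^2)-(m+s_1+s_2)\varepsilon$; sending $\varepsilon\to 0$ and maximizing over partitions gives the $\geq$ direction.

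For the upper bound I fix a metric $g_n\in c_n$ that almost realizes $\sigma_k^*(\Sigma,c_n)$ together with an $L^2(\partial\Sigma,g_n)$-orthonormal family of Steklov eigenfunctions $u_0^n,\ldots,u_k^n$. The point is that the Dirichlet energy $\int_\Sigma|\nabla u|^2$ is conformally invariant, so on each thin collar of modulus $M_n\to\infty$ one can cut each $u_i^n$ by a logarithmic cutoff supported in the collar and pay only an $O(1/\log M_n)$ loss in Dirichlet energy. After cutting, the functions are supported on the pieces of $\widehat{\Sigma_\infty}$ and on the $s_1+s_2$ disc caps, and on each such piece they are admissible for the corresponding conformal Steklov problem. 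A counting argument in the min-max principle shows that the number of eigenvalues $\sigma_i(g_n)L_{g_n}(\partial\Sigma)$ that can concentrate on a given piece with index budget $k_j$ (respectively $r_j$) without violating the appropriate sub-bound $\sigma^*_{k_j}(\Sigma_{\gamma_j,l_j},c_\infty)$ (respectively $\sigma^*_{r_j}(\mathbb D^2)=2\pi r_j$ from Theorem~\ref{petridesmax} applied to the disc) is at most that budget. Summing and optimizing the partition gives $\leq$.

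The main obstacle is the decoupling step in the upper bound. One must simultaneously preserve near-orthogonality of the $u_i^n$ in $L^2(\partial\Sigma,g_n)$, keep track of the boundary length lost inside the cutoff region so that the normalization by $L_{g_n}(\partial\Sigma)$ passes correctly to the limit, and distinguish the three collar types because only the half-pinch collar has boundary arcs along $\partial\Sigma$ on both sides of the cut. The conformal invariance of the Dirichlet integral is what makes the cutoff estimate $O(1/\log M_n)$ uniform in $g_n\in c_n$; combined with the sharp value $\sigma^*_k(\mathbb D^2)=2\pi k$, this produces a clean upper bound on each piece. The maximum over index combinations then arises automatically from the fact that a priori the eigenfunctions $u_i^n$ may distribute their boundary $L^2$-mass among the pieces in any way compatible with $\sum k_i+\sum r_j=k$.
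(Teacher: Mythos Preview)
Your lower bound sketch is close in spirit to the paper's, which also transplants near-extremal data from the limit pieces and the disc caps back into $\Sigma$; the paper packages this through the Steklov--Neumann machinery (Proposition~\ref{subdomain}, Lemma~\ref{omega_i}, Corollary~\ref{Neumann cor2}) rather than by literally gluing metrics, but the idea is the same and your outline would work once those tools are in place.

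The upper bound, however, has a genuine gap, and it is precisely the obstacle you flag in your last paragraph. For an \emph{arbitrary} near-maximizing metric $g_n\in c_n$ there is no a priori control on where the boundary length measure $ds_{g_n}$ sits: it may concentrate entirely inside a type-2 collar (the long rectangle with both long sides on $\partial\Sigma$), or bubble off at points on $\partial\Sigma$ near the ends of a collar. In that case your logarithmic cutoff destroys essentially all of the $L^2(\partial\Sigma,g_n)$-mass of the eigenfunctions, near-orthogonality is lost, and the ``counting argument'' has nothing to count. Conformal invariance controls only the Dirichlet side of the Rayleigh quotient; it says nothing about the boundary measure, which is exactly what you must track to get the correct normalization by $L_{g_n}(\partial\Sigma)$ on each piece.

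The paper gets around this not by refining your argument but by changing its structure. In Case~1, when $\sigma^*_k(\Sigma,c_n)>\sigma^*_{k-1}(\Sigma,c_n)+2\pi$, Petrides' theorem (Theorem~\ref{petridesmax}) produces an \emph{exact} conformal maximizer $g_n$ that comes from a free boundary harmonic map $\Phi_n\colon\Sigma\to\mathbb B^N$. One can then invoke the bubble convergence theorem for such maps \cite{laurain2017regularity} to see exactly how the boundary measure distributes among the thick parts and the collar necks (Proposition~\ref{sequences}); the non-concentration at cusp points is a consequence of this compactness, not of a soft cutoff estimate. Test functions are then built \emph{on the limit pieces} and pulled back, yielding a contradiction with~\eqref{gap}. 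In Case~2 the gap inequality fails and one closes by induction on $k$ using~\eqref{petya}. Your proposal is missing both the use of extremal metrics/harmonic maps and the dichotomy with the inductive case; without one of these ingredients the decoupling step cannot be made to work.
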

 
\begin{remark}\label{main_remark}
Let $\Sigma$ denote either cylinder or the M\"obius band. Theorem~\ref{conf&conv} particularly implies that if the sequence of conformal classes $\{c_n\}$ on $\Sigma$ degenerates then we necessarily have:
$$
\lim_{n \to \infty} \sigma^*_k (\Sigma, c_n)=2\pi k.
$$
\end{remark} 

\begin{remark}
In Theorem~\ref{conf&conv} the sequence $\{h_n\}$ can also have collapsing geodesics not crossing the boundary of $\Sigma$. Moreover, it can happen that the limiting space $\widehat{\Sigma_{\infty}}$ has \textit{closed} components (see Figure~\ref{F'}). Anyway, in Theorem~\ref{conf&conv} we take only components of $\widehat{\Sigma_{\infty}}$ which have non-empty boundary. 
\end{remark}

\begin{figure}[h!]
  \centering
  \def\svgwidth{\columnwidth}
  \includegraphics[scale=0.45]{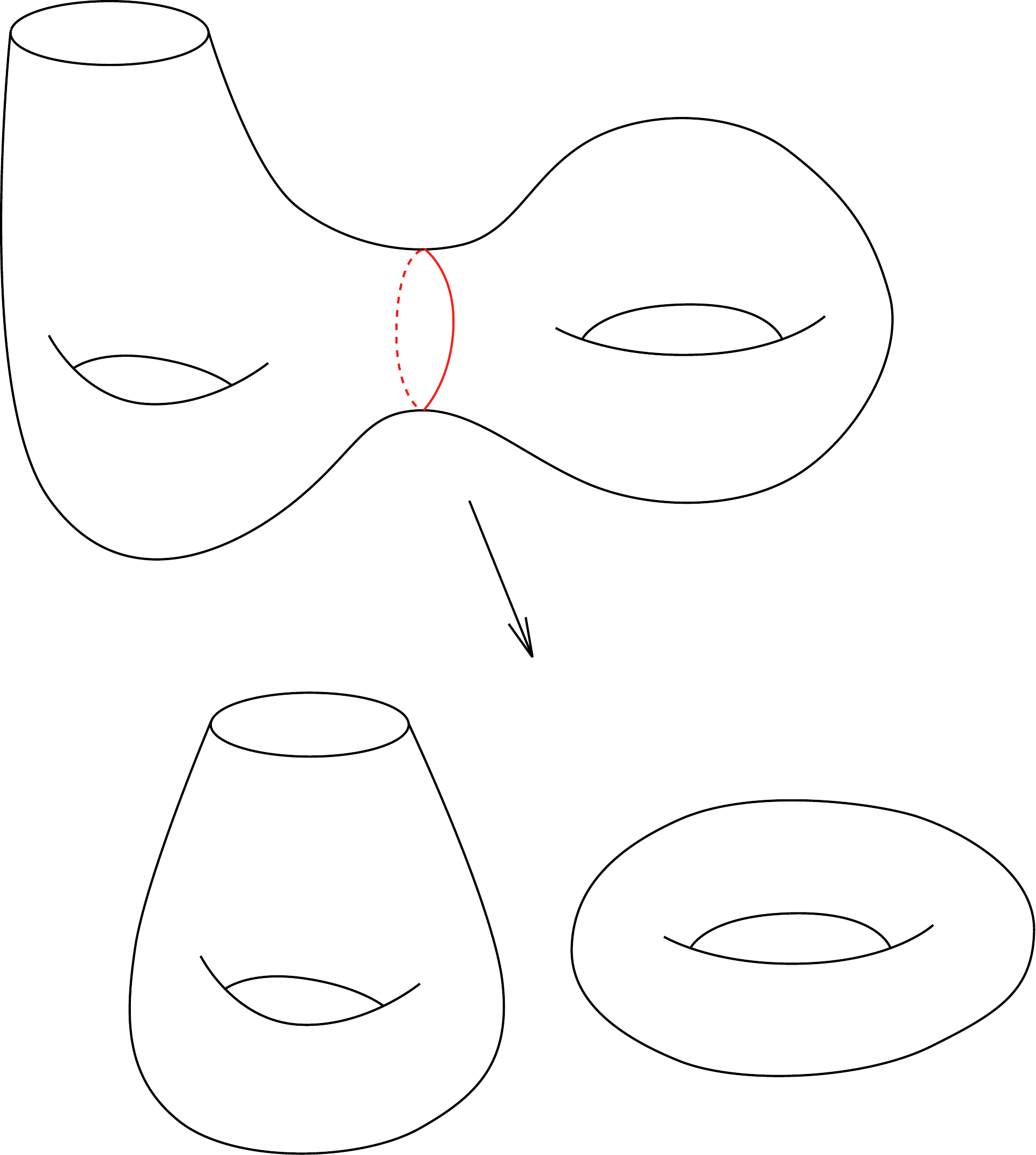}
    \footnotesize
    \caption{
   An example of a degenerating sequence of conformal classes $\{c_n\}$ on a surface of genus $2$ with $1$ boundary components such that the limiting space contains a closed component. In Theorem~\ref{conf&conv} we take only the component on the left which has non-empty boundary. Note that in this case $s_1=s_2=0$.
    }
    \label{F'}
\end{figure}


The main tool that we use in the proof of Theorem~\ref{conf&conv} is the \textit{Steklov-Neumann boundary problem} also known as the \textit{sloshing problem}. Let $\Omega$ be a Lipschitz domain in $(\Sigma,g)$ such that $\overline \Omega \cap \partial \Sigma = \partial^S\Omega \neq \O$. Let $\partial^N\Omega=\partial \Omega \setminus \partial \Sigma$. Then the Steklov-Neumann problem is defined as:
\begin{gather}\label{SN}
\begin{cases}
\Delta_g u=0&\text{in $\Omega$},\\
\frac{\partial u}{\partial n}=0&\text{on $\partial^N\Omega$},\\
\frac{\partial u}{\partial n}=\sigma^Nu&\text{on $\partial^S\Omega$}.
\end{cases}
\end{gather}
The numbers $\sigma^N$ for which the Steklov-Neumann problem admits a solution are called \textit{Steklov-Neumann eigenvalues}. It is known (see \cite{banuelos2010eigenvalue} and references therein) that the set of Steklov-Neumann eigenvalues is not empty and discrete
\begin{align*}
0=\sigma^N_0(g) < \sigma^N_1(g) \leq \sigma^N_2(g) \leq\ldots\nearrow +\infty.
\end{align*}
Every Steklov-Neumann eigenvalue admits the following variational characterization:
\begin{gather}\label{charSN}
\sigma^N_k(g)=\inf_{V_k\subset \mathcal H^1(\Omega)}\sup_{0 \neq u \in V_k}\frac{\int_\Omega|\nabla u|^2dv_g}{\int_{\partial^S\Omega}u^2ds_g},
\end{gather}
where the infimum is taken over all $k-$dimensional subspaces of the space $\mathcal H^1(\Omega)=\{u\in H^1(\Omega,g)~|~\int_{\partial^S\Omega} uds_g=0\}$.

Similarly to the case of the Steklov problem we define normalized Steklov-Neumann eigenvalues as
$$
\overline\sigma^N_k(\Omega, \partial^S\Omega,g):=\sigma^N_k(g)L_g(\partial^S\Omega).
$$
In this notation we always indicate the Steklov part of the boundary at the second place. Sometimes we also use the notation $\sigma^N_k(\Omega, \partial^S\Omega,g)$ for $\sigma^N_k(\Omega,g)$ to emphasize that the Steklov boundary condition is imposed on $\partial^S\Omega$.
\begin{remark}
Consider $\Omega$ as a surface with Lipschitz boundary. It also follows from~\cite[Theorem $A_k$]{kokarev2014variational} that the quantity $\overline\sigma^N_k(\Omega, \partial^S\Omega,g)$ is bounded from above on $[g]$ and we can define the invariant $\sigma^{N*}_k(\Omega, \partial^S\Omega,[g])$ in the same way as the invariant $\sigma^{*}_k(\Sigma,[g])$.
\end{remark}




Theorem~\ref{conf&conv} enables us to establish the value of $I^\sigma_k$.

\begin{theorem}
\label{disproof}
 Let $\Sigma$ be a compact surface with boundary. Then one has $I^\sigma_k(\Sigma)=I^\sigma_k(\mathbb D^2)=2\pi k$. 
  \end{theorem}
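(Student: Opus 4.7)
The lower bound $I^\sigma_k(\Sigma) \geq 2\pi k$ is immediate from inequality~\eqref{el soufi} in Theorem~\ref{petridesmax}. The case $\Sigma = \mathbb D^2$ is also immediate, since the disc admits a unique conformal class and $\sigma^*_k(\mathbb D^2) = 2\pi k$ is recorded in the introduction. Thus the problem reduces to showing $I^\sigma_k(\Sigma) \leq 2\pi k$ for an arbitrary $\Sigma \neq \mathbb D^2$.

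The plan is to exhibit a degenerating sequence of conformal classes $\{c_n\}$ on $\Sigma$ with $\sigma^*_k(\Sigma, c_n) \to 2\pi k$ and then invoke Theorem~\ref{conf&conv}. I choose the associated sequence of hyperbolic metrics $\{h_n\}$ (of constant curvature with geodesic boundary) so that the length of every boundary geodesic tends to zero while the lengths of all interior simple closed geodesics remain bounded below by a positive constant. Such a choice exists whenever the Teichm\"uller space of $\Sigma$ is non-trivial, i.e.\ for every $\Sigma \neq \mathbb D^2$; one obtains it from Fenchel--Nielsen coordinates by driving the boundary length parameters to zero while keeping the interior length parameters fixed. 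For non-orientable $\Sigma$ the construction is carried out equivariantly on the orientable double cover.

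For the resulting degeneration one has $s_1 = l$ collapsing boundary components, $s_2 = 0$ collapsing boundary-crossing arcs, and a compactified limiting space $\widehat{\Sigma_\infty}$ consisting only of closed components (the surface $\Sigma$ with its boundary circles collapsed to cusps and subsequently filled in). Hence the index $m$ in the sum of Theorem~\ref{conf&conv} vanishes, and since $\sigma^*_r(\mathbb D^2) = 2\pi r$ we obtain
\begin{equation*}
\lim_{n \to \infty} \sigma^*_k(\Sigma, c_n) \;=\; \max_{r_1 + \cdots + r_l = k}\sum_{i=1}^l 2\pi r_i \;=\; 2\pi k,
\end{equation*}
giving $I^\sigma_k(\Sigma) \leq 2\pi k$. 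The main obstacle I anticipate is verifying that the chosen Fenchel--Nielsen deformation genuinely produces a degenerating sequence of conformal classes with \emph{precisely} the prescribed collection of collapsing curves (just the boundary components, and nothing in the interior); once this is granted, Theorem~\ref{conf&conv} delivers the conclusion essentially immediately.
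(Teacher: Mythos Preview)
Your argument is correct, and it takes a genuinely different degeneration from the one in the paper. The paper proves Lemma~\ref{metric sequences}, which exhibits for each $\Sigma$ of negative Euler characteristic a specific degenerating sequence (pictured in Figures~\ref{F1}--\ref{F3}) whose compactified limit $\widehat{\Sigma_\infty}$ is a single disc; Theorem~\ref{conf&conv} then gives $\lim_n\sigma^*_k(\Sigma,c_n)=\max_{\sum k_j=k}\sum_j\sigma^*_{k_j}(\mathbb D^2)=2\pi k$. You instead send every boundary length to zero and nothing else, so that $s_1=l$, $s_2=0$, and the limiting space has only \emph{closed} components, whence $m=0$ and the formula of Theorem~\ref{conf&conv} reduces immediately to $\max_{\sum r_i=k}\sum_i 2\pi r_i=2\pi k$. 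This route is arguably cleaner: it avoids the case analysis and pictures of Lemma~\ref{metric sequences}, and it leans directly on the remark following Theorem~\ref{conf&conv} that closed components of $\widehat{\Sigma_\infty}$ contribute nothing (confirmed in the proof by Remark~\ref{useful}, which explicitly treats the case where all boundary components pinch). The paper's choice, on the other hand, yields a connected limiting surface with boundary, which may feel more concrete.

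Your flagged obstacle---that one can choose hyperbolic metrics with only the boundary geodesics collapsing---is real but standard: include the boundary curves in a pants decomposition (equivalently, a symmetric pants decomposition of the Schottky double) and use Fenchel--Nielsen coordinates to drive those lengths to zero while holding the others fixed; the collar lemma then guarantees no interior systole collapses. The paper's Lemma~\ref{metric sequences} relies on the same kind of assertion, so you are not skipping anything the paper supplies in greater detail.
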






\subsection{Discussion}

Let us discuss the estimate obtained in Theorem~\ref{non-bound}. The first estimate on $\overline{\sigma}_1(\Sigma,g)$ where $\Sigma$ is a non-orientable surface of genus $\gamma$ with boundary was obtained in the paper \cite{MR3167132}. It reads
$$
\overline{\sigma}_1(\Sigma,g) \leq 24\pi (\gamma+1),
$$
if $\gamma\geq 1$ and 
$$
\overline{\sigma}_1(\Sigma,g) \leq 12\pi,
$$
if $\gamma=0$. Moreover, it follows from the papers~\cite{kokarev2014variational, MR3579963} that
\begin{gather}\label{Kokarev}
\overline{\sigma}_1(\Sigma,g) \leq 16\pi \Big[\frac{\gamma+3}{2}\Big],
\end{gather}
were $[x]$ stands for the integer part of the number $x$.

Very recently in the paper~\cite{karpukhin-stern} estimate~\eqref{Kokarev} has been improved and extended for $k=2$: consider $\Sigma$ as a domain with smooth boundary on a closed surface $M$, then one has
\begin{gather}\label{Karpukhin}
\overline{\sigma}_k(\Sigma,g) \leq \Lambda_k(M),~k=1,2.
\end{gather}
In this estimate $\Lambda_k(M):=\sup_{g\in \mathcal R(M)} \lambda_k(g)\Vol(M,g)$, where $\lambda_k(g)$ is the $k-$th Laplace eigenvalue of the metric $g$, $\Vol(M,g)$ is the volume of $M$ in the metric $g$ and $\mathcal R(M)$ is the set of Riemannian metrics on $M$. Note that estimate~\eqref{Karpukhin} does not depend on the number of boundary components. Combining estimate~\eqref{Karpukhin} with our estimate we get
$$
\overline{\sigma}_k(\Sigma,g) \leq \min\{\Lambda_k(M), 4\pi k(\gamma+2l)\},~k=1,2.
$$
Particularly, for the M\"obius band one has
$$
\overline{\sigma}_k(\mathbb{MB},g) \leq \min\{\Lambda_k(\mathbb{RP}^2), 8\pi k\},~k=1,2,
$$
since $\mathbb{MB} \subset \mathbb{RP}^2$. The value $\Lambda_k(\mathbb{RP}^2)$ is known for all $k$ (see~\cite{karpukhin2019index}): $\Lambda_k(\mathbb{RP}^2)=4\pi(2k+1)$. Hence
$$
\overline{\sigma}_k(\mathbb{MB},g) \leq \min\{4\pi(2k+1), 8\pi k\}=8\pi k,~k=1,2.
$$
In the paper~\cite{fraser2016sharp} it was shown that $\overline{\sigma}_1(\mathbb{MB},g) \leq 2\pi\sqrt{3}$ which is obviously $\leq 8\pi$. 

We proceed with the discussion of the functional $I^\sigma_k$. Unlike Theorem 1.4 in \cite{karpukhin2019friedlander} Theorem \ref{disproof} says nothing about conformal classes on which the value $I^\sigma_k(\Sigma)$ is attained. We conjecture that

\begin{conjecture}\label{Conj1}
The infimum $I^\sigma_k(\Sigma)$ is attained if and only if $\Sigma$ is diffeomorphic to the disc $\mathbb D^2$. 
\end{conjecture}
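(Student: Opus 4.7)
The ``if'' direction is immediate: $\mathbb D^2$ admits a unique conformal class, and since $\sigma^*_k(\mathbb D^2,[g_{can}]) = \sigma^*_k(\mathbb D^2) = 2\pi k$, Theorem~\ref{disproof} shows $I^\sigma_k(\mathbb D^2) = 2\pi k$ is trivially attained. For the ``only if'' direction the plan is to argue by contradiction: suppose $\sigma^*_k(\Sigma,c) = 2\pi k$ for some conformal class $c$ on a surface $\Sigma \not\cong \mathbb D^2$. The first step is a reduction to $k=1$. Iterating the Petrides bound~\eqref{petya} yields
\begin{align*}
2\pi k \;=\; \sigma^*_k(\Sigma,c) \;\geq\; \sigma^*_{k-1}(\Sigma,c) + 2\pi \;\geq\; \cdots \;\geq\; 2\pi k,
\end{align*}
so equality $\sigma^*_j(\Sigma,c)=2\pi j$ must hold at every step; in particular $\sigma^*_1(\Sigma,c)=2\pi$. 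It therefore suffices to prove the assertion for $k=1$: on any non-disc surface, $\sigma^*_1(\Sigma,c)>2\pi$ for every conformal class.

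To establish this I would split into two cases according to whether the supremum defining $\sigma^*_1(\Sigma,c)$ is attained. If a maximizer $\tilde g \in c$ exists with $\overline\sigma_1(\Sigma,\tilde g)=2\pi$, I would appeal to the Fraser--Schoen framework to realize $(\Sigma,\tilde g)$ (after a conformal change) as a free boundary minimal immersion into a Euclidean ball whose coordinate functions are first Steklov eigenfunctions, and then invoke a Weinstock-type rigidity argument: the equality $\overline\sigma_1=2\pi$ should force the immersion to be the unit disc. If instead no maximizer exists, a maximizing sequence $g_n \in c$ must develop concentration, and I would adapt Petrides' bubble-neck analysis from \cite{petrides2019maximizing} to extract a pinching disc carrying the full $2\pi$ of eigenvalue mass. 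Because $\Sigma \not\cong \mathbb D^2$ has genuine topology outside the bubble, one should then produce a test function supported away from the concentration region, orthogonal to constants on $\partial^S$, with strictly positive Rayleigh quotient, contradicting $\sigma^*_1(\Sigma,c)=2\pi$. Theorem~\ref{conf&conv} and its proof techniques (via the Steklov--Neumann problem on neck regions) should be the right tool to make this quantitative.

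\textbf{Main obstacle.} The decisive difficulty is the rigidity step: showing that $\overline\sigma_1(\Sigma,g)=2\pi$ attained by a genuine metric in a conformal class on a compact surface forces $\Sigma \cong \mathbb D^2$. Classical Weinstock rigidity is established only for simply connected planar domains, and extending it to surfaces with handles or multiple boundary components requires a careful study of the boundary map furnished by first eigenfunctions in the presence of extra harmonic $1$-forms. The non-attainment case is compounded by the fact that Petrides' existence theorem fails at precisely the boundary of its applicability, so the bubble analysis must be refined to the equality regime, where the standard compactness and gap arguments no longer apply without modification.
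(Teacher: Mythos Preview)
The statement you are attempting to prove is labelled \emph{Conjecture} in the paper, and the paper does not prove it; it explicitly says ``We plan to tackle Conjectures~\ref{Conj1} and~\ref{Conj2} in the subsequent papers.'' So there is no proof in the paper to compare against.

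Your reduction is correct and matches exactly what the paper observes: the ``if'' direction is trivial, and your iteration of~\eqref{petya} shows that Conjecture~\ref{Conj1} follows from Conjecture~\ref{Conj2} (the strict inequality $\sigma^*_1(\Sigma,c)>2\pi$ for $\Sigma\not\cong\mathbb D^2$). The paper states this implication in one line without writing out the telescoping argument, so your proposal makes that step explicit.

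Beyond the reduction, however, what you have written is not a proof but a programme, and you are candid about this in your ``Main obstacle'' paragraph. Both branches of your case split are genuinely open. In the attained case, the Fraser--Schoen correspondence does give a free boundary minimal immersion, but no Weinstock-type rigidity result is currently available for surfaces with nontrivial topology; the classical argument uses that the coordinate functions of a conformal balanced map from $\mathbb D^2$ to $\mathbb D^2$ exhaust the first eigenspace, and this breaks down once $H^1(\Sigma;\mathbb R)\ne 0$. In the non-attained case, Petrides' analysis is set up precisely under the hypothesis of strict inequality in~\eqref{petya}, and extracting information at the equality threshold is exactly the missing ingredient. The paper notes that Conjecture~\ref{Conj2} has been verified for the cylinder and the M\"obius band in~\cite{matthiesen2020remark}, which is consistent with your outline but does not resolve the general case.
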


Note that this conjecture would be a corollary of the following one 

\begin{conjecture}\label{Conj2}
Let $\Sigma$ be a compact surface non-diffeomorphic to the disc. Then for every conformal class $c$ on $\Sigma$ one has
$$
\sigma^*_1(\Sigma,c)>\sigma^*_1(\mathbb D^2)=2\pi.
$$ 
\end{conjecture}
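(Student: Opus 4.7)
The natural plan is to argue by contradiction, using the sharp dichotomy of Theorem~\ref{petridesmax}. Fix $\Sigma\not\cong\mathbb{D}^2$ and assume $\sigma^*_1(\Sigma,c)=2\pi$ for some conformal class $c$. Applied with $k=1$, inequality~\eqref{petya} becomes an equality, so the second part of Theorem~\ref{petridesmax} implies that no smooth maximizer of $\overline{\sigma}_1$ exists in $c$. Any maximizing sequence $(g_n)\subset c$ with $\overline{\sigma}_1(\Sigma,g_n)\to 2\pi$ must therefore concentrate. Following the bubble analysis of \cite{petrides2019maximizing, karpukhin-stern}, one expects that after extraction the boundary length measure of $g_n$ converges to a Dirac mass at a single point $p\in\partial\Sigma$, and that the first Steklov eigenfunctions rescale near $p$ to the standard coordinate eigenfunctions of the flat half-plane model of $\mathbb{D}^2$.

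The assumption $\Sigma\not\cong\mathbb{D}^2$ provides non-trivial topology away from $p$: a second boundary component, a handle, or a cross-cap. I would exploit this to build a Steklov--Neumann-type competitor in the sense of~\eqref{charSN}. Concretely, choose a Lipschitz domain $\Omega\subset\Sigma$ disjoint from a fixed neighborhood of $p$ and capturing the extra topology (for example, a collar of a second boundary component, or a tubular neighborhood of a non-separating curve meeting $\partial\Sigma$ in a short arc). On such an $\Omega$ one can write down a Steklov--Neumann test function $v$ with Rayleigh quotient strictly below $2\pi/L_{g_n}(\partial^S\Omega)$, using precisely the non-trivial topology of $\Omega$. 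Combining $v$ with a cut-off of the bubble eigenfunction near $p$, and orthogonalizing against constants on $\partial\Sigma$, yields a two-dimensional test space for the Rayleigh quotient of $(\Sigma,g_n)$ which forces $\sigma_2(g_n)L_{g_n}(\partial\Sigma)<4\pi+o(1)$. Applying Theorem~\ref{petridesmax} at level $k=2$ to the conformal class $c$ then contradicts the equality $\sigma^*_1(\Sigma,c)=2\pi$, since it would demand either a second bubble or strict inequality, both of which preclude the concentration picture we started with.

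The principal obstacle, and the reason the statement is only a conjecture, is to quantify the interaction between the bubble at $p$ and the fixed region $\Omega$ along the sequence. Because the boundary length measure of $g_n$ concentrates at $p$, the length $L_{g_n}(\partial^S\Omega)$ may tend to zero along the sequence, and the ratio of the Dirichlet energy of $v$ to its Steklov $L^2$-norm must be controlled against this shrinking denominator. A uniform conformal capacity-type lower bound for the harmonic measure of $\partial^S\Omega$ in $(\Sigma,g_n)$ is needed, in the spirit of the estimates used in~\cite{girouard2012upper, karpukhin2015bounds}, and one must further exclude the formation of a second concentration point inside $\Omega$, which would activate the $k=2$ case of Theorem~\ref{petridesmax} directly. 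For the borderline cases $\Sigma=$ cylinder or M\"obius band, where Remark~\ref{main_remark} shows $\sigma^*_1(\Sigma,c_n)\to 2\pi$ along any degenerating sequence, a supplementary continuity argument on the moduli space of conformal classes, combined with a Fraser--Schoen-type sharp bound at the critical modulus, would be required to exclude equality in the interior.
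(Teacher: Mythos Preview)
The statement you were asked to prove is labelled a \emph{conjecture} in the paper, and the paper does not prove it. The text explicitly says ``We plan to tackle Conjectures~\ref{Conj1} and~\ref{Conj2} in the subsequent papers,'' and only records that the special cases of the cylinder and the M\"obius band have been settled elsewhere (reference~\cite{matthiesen2020remark}). There is therefore no ``paper's own proof'' to compare your proposal against.

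As for the proposal itself, the overall architecture --- assume equality, invoke the non-existence of a smooth maximizer from Theorem~\ref{petridesmax}, run bubble analysis on a maximizing sequence, and try to use the residual topology of $\Sigma\setminus\{p\}$ to manufacture a strict improvement --- is indeed the shape one expects by analogy with Petrides' rigidity theorem for the first conformal Laplace eigenvalue. You correctly identify the main analytic difficulty: once the boundary length measure concentrates at $p$, any competitor supported away from $p$ sees its Steklov $L^2$-norm collapse, and without a uniform capacity/harmonic-measure lower bound the Rayleigh quotient of your function $v$ need not stay finite.

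However, the specific contradiction you sketch at the end does not close. Bounding $\overline\sigma_2(\Sigma,g_n)$ from above by $4\pi+o(1)$ for one particular sequence $g_n$ says nothing about $\sigma^*_2(\Sigma,c)$, since $g_n$ was chosen to maximize $\overline\sigma_1$, not $\overline\sigma_2$; and Theorem~\ref{petridesmax} at level $k=2$ only gives $\sigma^*_2(\Sigma,c)\geq 4\pi$, which is compatible with, not contradicted by, your inequality. The contradiction in Petrides-type arguments runs the other way: one must show that the extra topology allows a strict \emph{improvement} of $\overline\sigma_1$ along the maximizing sequence itself, i.e.\ produce a single test function (orthogonal to constants) whose Rayleigh quotient for $g_n$ is strictly below $2\pi/L_{g_n}(\partial\Sigma)$, contradicting $\overline\sigma_1(\Sigma,g_n)\to 2\pi$. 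Rerouting through $\sigma_2$ is a detour that does not land.
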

This conjecture is an analog of the Petrides rigidity theorem for the first conformal Laplace eigenvalue \cite[Theorem 1]{petrides2014existence}. Recently this conjecture has been confirmed in the case of the cylinder and the M\"obius band (see \cite{matthiesen2020remark}).  We plan to tackle Conjectures~\ref{Conj1} and~\ref{Conj2} in the subsequent papers.

Let us discuss the analogy between the quantity $I^\sigma_k$ and the Friedlander-Nadirashvili invariant of closed surfaces $I_k$. In the paper~\cite{karpukhin2019friedlander} it was conjectured that $I_k$ are invariants of cobordisms of closed surfaces (see Conjecture 1.8). Similarly, one can see that $I^\sigma_k$ are invariants of cobordisms of compact surfaces with boundary. Let us recall that two compact surfaces with boundary $(\Sigma_1,\partial\Sigma_1)$ and $(\Sigma_2,\partial\Sigma_2)$ are called cobordant if there exists a 3-dimensional \textit{manifold with corners} $\Omega$ whose boundary is $\Sigma_1\cup_{\partial\Sigma_1} W \cup_{\partial\Sigma_2}\Sigma_2$, where $W$ is a cobordism of $\partial\Sigma_1$ and $\partial\Sigma_2$ (i.e. $W$ is a surface with boundary $\partial\Sigma_1\sqcup\partial\Sigma_2$). Following~\cite{borodzik2016morse} we denote a cobordism of two surfaces $(\Sigma_1,\partial\Sigma_1)$ and $(\Sigma_2,\partial\Sigma_2)$ by $(\Omega;\Sigma_1,\Sigma_2,W;\partial\Sigma_1,\partial\Sigma_2)$. One can easily see that the cobordisms of surfaces with boundary are trivial. Indeed, we can construct the following cobordism of a surface $(\Sigma,\partial\Sigma)$ and $(\O, \O)$: $(\Sigma\times [0,1];\Sigma\times \{0\}, \O,\partial\Sigma\times [0,1]\cup\Sigma\times \{1\};\partial\Sigma, \O)$. A fundamental fact about cobordisms of surfaces with boundary is \textit{Theorem about splitting cobordisms} (see~\cite[Theorem 4.18]{borodzik2016morse}) which says that every cobordism of compact surfaces with boundary can be split into a sequence of cobordisms given by a handle attachment and cobordisms given by a \textit{half-handle} attachment. We refer to~\cite{borodzik2016morse} for definitions and further information about cobordisms of compact manifolds with boundary. Analysing the proof of Theorem~\ref{disproof} one can remark that the value of $I^\sigma_k$ does not change under handle and half-handle attachments. Since by this procedure any surface $\Sigma$ can be reduced to the disc, we get $I^\sigma_k(\Sigma)=I^\sigma_k(\mathbb D^2)=2\pi k$.


\subsection*{Plan of the paper.} The paper is organized in the following way. 
In Section~\ref{analysis} we collect all the analytic facts which are necessary for the proof of Theorem~\ref{conf&conv}. The main result here is Proposition~\ref{subdomain}. In Section~\ref{appendix4} we prove Theorem~\ref{non-bound} using the techniques developed in the previous section. Section~\ref{geometry} represents the geometric part of the paper. Here we describe convergence on the moduli space of conformal structures on a surface with boundary. Section~\ref{proofconf&conv} is devoted to the proof of Theorem~\ref{conf&conv}. In Section~\ref{main theorem proof} we deduce Theorem~\ref{disproof} from Theorem~\ref{conf&conv}. Finally, Section~\ref{appendix} contains some auxiliary technical results. 

\subsection*{Acknowledgements.} The author would like to express his gratitude to Iosif Poltero-vich, Mikhail Karpukhin, Alexandre Girouard and Bruno Colbois for stimulating discussions and useful remarks during the preparation of the paper. The author is also thankful to the reviewers for valuable remarks and helpful suggestions. This research is a part of author's PhD thesis at the Universit\'e de Montr\'eal under the supervision of Iosif Polterovich. This work is supported by the Ministry of Science and Higher Education of the Russian Federation: agreement no. 075-03-2020-223/3 (FSSF-2020-0018).

\medskip

\section{Analytic background}\label{analysis}
Here we provide a necessary analytic background that we will use in the proof of Theorem~\ref{conf&conv} in Section~\ref{proofconf&conv}. The propositions in this section are analogs of the propositions in \cite[Section 4]{karpukhin2019friedlander}. We postpone the proof of a proposition to Section \ref{appendix2} every time when it follows the exactly same way as the proof of an analogous proposition in \cite[Section 4]{karpukhin2019friedlander}.

\subsection{Convergence of Steklov-Neumann spectrum} \label{convergence} 

We start with the following convergence result.

\begin{lemma}\label{Neumann conv}
Let $(M, g)$ be a compact Riemannian manifold with boundary. Consider a finite collection $\{ B_\epsilon(p_i) \}_{i=1}^l$ of geodesic balls of radius $\epsilon$ centred at some points $p_1,\ldots,p_l \in M$. Then the spectrum of the Steklov-Neumann problem
\begin{gather}
\label{use}
\begin{cases}
\Delta_gu=0&\text{in $M\setminus \cup^l_{i=1}B_\epsilon(p_i)$},\\
\frac{\partial u}{\partial n}=0&\text{on $\cup^l_{i=1} \partial B_\epsilon(p_i) \setminus \partial M$},\\
\frac{\partial u}{\partial n}=\lambda^N_k(M \setminus \cup^l_{i=1}B_\epsilon(p_i), g) u&\text{on $\partial M \setminus \cup^l_{i=1} \partial B_\epsilon(p_i)$}
\end{cases}
\end{gather}
converges to the Steklov spectrum of $(M,g)$ as $\epsilon \to 0$.
\end{lemma}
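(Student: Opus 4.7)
The plan is to establish matching upper and lower bounds on $\sigma^N_k(M_\epsilon,g)$ by using the variational characterisation~\eqref{charSN} and the analogous min-max formula for $\sigma_k(g)$, where $M_\epsilon:=M\setminus\bigcup_{i=1}^l B_\epsilon(p_i)$ and $\partial^S M_\epsilon=\partial M\setminus\bigcup_i\partial B_\epsilon(p_i)$. The intuition is that both the ``bulk'' perturbation $M\setminus M_\epsilon$ and the boundary perturbation $\partial M\setminus\partial^S M_\epsilon$ have vanishing measure as $\epsilon\to 0$, so the associated quadratic forms should converge. For the upper bound $\limsup_{\epsilon\to 0}\sigma^N_k(M_\epsilon,g)\le\sigma_k(g)$, I would take the first $k$ non-constant Steklov eigenfunctions $\phi_1,\dots,\phi_k$ on $M$, restrict them to $M_\epsilon$, and subtract the mean of each restriction over $\partial^S M_\epsilon$. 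Since $\int_{\partial M}\phi_j=0$ and the $(n-1)$-volume of $\partial M\setminus\partial^S M_\epsilon$ vanishes, these means are $o(1)$, and dominated convergence gives $\int_{M_\epsilon}|\nabla\phi_j|^2\to\int_M|\nabla\phi_j|^2$ and $\int_{\partial^S M_\epsilon}\phi_j^2\to\int_{\partial M}\phi_j^2$. Feeding this $k$-dimensional admissible subspace into~\eqref{charSN} then delivers the upper bound.

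For the lower bound $\liminf_{\epsilon\to 0}\sigma^N_k(M_\epsilon,g)\ge\sigma_k(g)$, let $u^N_1,\dots,u^N_k$ denote the first $k$ non-constant Steklov-Neumann eigenfunctions on $M_\epsilon$, normalised by $\int_{\partial^S M_\epsilon}(u^N_j)^2=1$. The upper bound just established yields a uniform-in-$\epsilon$ Dirichlet energy bound $\int_{M_\epsilon}|\nabla u^N_j|^2\le C$. I would then extend each $u^N_j$ to a function $\tilde u^N_j\in H^1(M)$ by harmonic extension inside every $B_\epsilon(p_i)$ (with the suitable mixed Steklov/Dirichlet data when $p_i\in\partial M$), and subtract a vanishing constant to enforce $\int_{\partial M}\tilde u^N_j=0$. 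The critical estimate is $\int_{B_\epsilon(p_i)}|\nabla\tilde u^N_j|^2\to 0$: because the harmonic extension minimises Dirichlet energy, it is enough to produce any competitor with vanishing energy, and a rescaling-plus-trace argument reduces this to the absolute-continuity fact $\int_{B_{2\epsilon}(p_i)\setminus B_\epsilon(p_i)}|\nabla u^N_j|^2\to 0$, which is immediate from the uniform $H^1$-bound. Once this is in hand, the span $\{\tilde u^N_j-\text{mean}\}_{j=1}^k$ is a $k$-dimensional admissible subspace for the Rayleigh quotient of $\sigma_k(g)$ whose worst Rayleigh quotient is $\sigma^N_k(M_\epsilon,g)+o(1)$, closing the min-max comparison.

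The main obstacle is the extension estimate in the lower bound, especially in the case $p_i\in\partial M$: there the geometry near $p_i$ is a half-ball and the boundary data are mixed, so the harmonic extension must respect both the Steklov condition on the arc $\partial B_\epsilon(p_i)\cap\partial M$ and the Neumann condition that was imposed on $\partial B_\epsilon(p_i)\setminus\partial M$. I would handle this by localising to a conformal chart around $p_i$ and applying a reflection across $\partial M$ so as to reduce to a standard interior harmonic extension on a full ball. A secondary, much milder, technicality is verifying that mean-zero projection does not collapse the dimension of the test subspaces on either side of the comparison; this follows because the subtracted constants are $o(1)$ while the original eigenfunctions are, by construction, linearly independent modulo the constants. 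With these pieces in place, the two-sided bound $\sigma^N_k(M_\epsilon,g)=\sigma_k(g)+o(1)$ follows and yields the asserted spectral convergence.
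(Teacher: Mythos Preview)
Your upper bound argument (restricting Steklov eigenfunctions of $M$ to $M_\epsilon$ and recentering) is essentially the paper's Claim~2, and is fine.

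The lower bound contains a genuine gap. You assert that
\[
\int_{B_{2\epsilon}(p_i)\setminus B_\epsilon(p_i)}|\nabla u^N_j|^2\,dv_g\to 0
\]
``is immediate from the uniform $H^1$-bound''. This is false as stated: the eigenfunctions $u^N_j=u^N_{j,\epsilon}$ vary with $\epsilon$, so a uniform bound $\int_{M_\epsilon}|\nabla u^N_{j,\epsilon}|^2\le C$ does \emph{not} force the energy on a shrinking annulus to vanish---there is no absolute-continuity argument available for a moving sequence of integrands. Your ``rescaling-plus-trace'' reduction is correct in spirit (in dimension two the Dirichlet energy of a harmonic extension into $B_\epsilon$ is controlled by $\|\nabla u\|_{L^2}^2$ on an exterior annulus), but the conclusion that this annular energy tends to zero needs an additional idea. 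The standard fix is a good-radius (pigeonhole) argument: among the $\sim\log(1/\epsilon)$ dyadic annuli $B_{2^{k+1}\epsilon}\setminus B_{2^k\epsilon}$ contained in a fixed ball, at least one carries energy $O(1/\log(1/\epsilon))$, and one extends across \emph{that} annulus rather than the innermost one. Without something like this, the lower bound does not close.

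The paper sidesteps this entirely. It only proves that the extension operator $\mathcal E$ is \emph{uniformly bounded} in $H^1$ (its Claim~1), not that the extension energy vanishes. It then runs a compactness argument: the extended eigenfunctions $\tilde u_\epsilon$ are bounded in $H^1(M,g)$, so along a subsequence $\tilde u_\epsilon\rightharpoonup u$ in $H^1$; elliptic estimates give $C^\infty_{\mathrm{loc}}$ convergence on $M\setminus\{p_i\}$, whence $u$ is a Steklov eigenfunction with eigenvalue equal to the subsequential limit $\sigma$ of $\sigma^N_k(M_\epsilon,g)$. To pin down $\sigma=\sigma_k(M,g)$ (rather than some lower eigenvalue), the paper argues by induction on $k$, using trace-compactness to pass the $L^2(\partial^S M_\epsilon)$-orthogonality of $u^N_{k,\epsilon}$ to the lower eigenfunctions through the limit. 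This approach trades your quantitative extension estimate for a soft compactness statement plus an induction, and is what actually appears in the paper.
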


\begin{proof}
For the sake of simplicity we only consider the case of one ball that we denote by $B_\epsilon$ centred at $p \in M$. First we consider the case when $B_\epsilon \cap \partial M \neq \varnothing$, i.e. $p\in \partial M$. 

Let $\mathcal E(u)$ denote the extension of the function $u$ by the unique solution of the problem 
\begin{gather*}
\begin{cases}
\Delta_g\mathcal E(u)=0&\text{in $B_\epsilon$},\\
\frac{\partial \mathcal E(u)}{\partial n}=0&\text{on $\partial M \cap \partial B_\epsilon$},\\
\mathcal E(u)= u&\text{on $\partial B_\epsilon \setminus \partial M$}.
\end{cases}
\end{gather*}

{\bf Claim 1.} The operator $\mathcal E(u)$ is uniformly bounded.

\begin{proof} The proof is similar to the proof of uniform boundedness of the harmonic continuation operator into small geodesic balls \cite[Example 1]{rauch1975potential}.
Fix $0<r<\epsilon$ and let $B_r$ denote a geodesic ball of radius $r$ with the same center as $B_\epsilon$. One has
\begin{gather}\label{first}
||\mathcal E(u)||^2_{L^2(B_r,g)} \leq C||u||^2_{L^2(M\setminus B_r,g)}+C||\nabla u||^2_{L^2(M\setminus B_r,g)}
\end{gather}
and
\begin{gather}\label{second}
||\nabla \mathcal E(u)||^2_{L^2(B_r,g)} \leq C||\nabla u||^2_{L^2(M\setminus B_r,g)}.
\end{gather}
Inequality~\eqref{first} follows from estimate \eqref{finally} and the trace inequality
$$
||\mathcal E(u)||^2_{L^2(B_r,g)} \leq ||\mathcal E(u)||^2_{H^1(B_r,g)} \leq C||u||^2_{H^{1/2}(\partial B_r \setminus \partial M,g)} \leq C||u||^2_{H^1(M\setminus B_r,g)}.
$$
Suppose that inequality~\eqref{second} was false. Then there exists a sequence of functions $\{u_n\}$ in $H^1(M\setminus B_r,g)$ such that
$$
||\nabla u_n||_{L^2(M\setminus B_r,g)} \leq 1/n
$$
and
$$
||\mathcal E(u_n)||_{L^2(B_r,g)} \geq 1.
$$
Consider $\alpha_n=\frac{1}{Vol(M\setminus B_r,g)}\int_{M\setminus B_r}u_ndv_g$. We show that
$$
||u_n-\alpha_n||_{H^1(M\setminus B_r,g)} \leq C/n.
$$
Indeed, by the generalized Poincar\'e inequality one has
$$
||u_n-\alpha_n||_{L^2(M\setminus B_r,g)} \leq C||\nabla u_n||_{L^2(M\setminus B_r,g)} \leq C/n
$$
moreover 
$$
||\nabla (u_n-\alpha_n)||_{L^2(M\setminus B_r,g)}=||\nabla u_n||_{L^2(M\setminus B_r,g)} \leq 1/n.
$$
Note that $\mathcal E(u_n-\alpha_n)=\mathcal E(u_n)-\alpha_n$. Then we can prove inequality~\eqref{second}
\begin{gather*}
||\nabla \mathcal E(u_n)||_{L^2(B_r,g)}=||\nabla \mathcal E(u_n-\alpha_n)||_{L^2(B_r,g)} \leq || \mathcal E(u_n-\alpha_n)||_{H^1(B_r,g)} \leq \\ \leq ||u_n-\alpha_n||_{H^{1/2}(\partial B_r \setminus \partial M,g)} \leq C||u_n-\alpha_n||_{H^{1}(M\setminus B_r,g)} \leq C/n,
\end{gather*}
where in the second and third inequalities we have used in order estimate \eqref{finally} and the trace inequality. We got a contradiction. Hence inequality~\eqref{second} is true. 

Note that for any $\rho r<\epsilon$ the first inequality scales as
$$
||\mathcal E(u)||^2_{L^2(B_{\rho r},g)} \leq C||u||^2_{L^2(M\setminus B_{\rho r},g)}+C\rho^2||\nabla u||^2_{L^2(M\setminus B_{\rho r},g)},
$$
while the second inequality scales as
$$
||\nabla \mathcal E(u)||^2_{L^2(B_{\rho r},g)} \leq C||\nabla u||^2_{L^2(M\setminus B_{\rho r},g)}.
$$
Therefore, $||\mathcal E(u)||^2_{H^1(B_{\rho r},g)} \leq C||u||^2_{L^2(M\setminus B_{\rho r},g)}+C||\nabla u||^2_{L^2(M\setminus B_{\rho r},g)}$ for $\epsilon$ small enough. 
\end{proof}

{\bf Claim 2.} One has
$$
\limsup_{\epsilon \to 0}\sigma^N_k(M\setminus B_\epsilon,g) \leq \sigma_k(M,g).
$$
\begin{proof}
We only consider the case of $B_\epsilon \cap \partial M\neq \o$. The case of $B_\epsilon \cap \partial M=\o$ is easier and follows the exactly same arguments. The proof is similar to the proof of \cite[Theorem 3.5]{bogosel2017steklov}.

Let $V_k$ be a $k-$dimensional subspace of $H^1(M,g)$ and $v\in V_k$ such that
$$
\sigma_k(M,g)=\max_{u\in V_k\setminus \{0\}}\frac{\int_M|\nabla u|^2dv_g}{\int_{\partial M}u^2ds_g}.
$$
Let $u_1,\ldots,u_k$ be an orthonormal basis in $V_k$. We modify the functions $u_i, i=1,\ldots,k$ as
$$
u_{i,\epsilon}=u_i-\frac{1}{L(\partial M \setminus \partial B_\epsilon)}\int_{\partial M\setminus \partial B_\epsilon}u_ids_g.
$$
Then $\int_{\partial M\setminus \partial B_\epsilon}u_{i,\epsilon}ds_g=0$. Consider the space $V_{k,\epsilon}:=span(u_{1,\epsilon},\ldots,u_{k,\epsilon})$. Since $\dim V_{k,\epsilon}=k$ one has
$$
\sigma^N_k(M\setminus B_\epsilon,g) \leq \max_{u_\epsilon \in V_{k,\epsilon}\setminus\{0\}}\frac{\int_{M\setminus B_\epsilon}|\nabla u_\epsilon|^2dv_g}{\int_{\partial M\setminus \partial B_\epsilon}u_\epsilon^2ds_g}.
$$
Moreover, since the dimension of $V_{k,\epsilon}$ is finite then there exists a function $v_\epsilon\in V_{k,\epsilon}$ such that
\begin{gather}\label{forclaim}
\sigma^N_k(M\setminus B_\epsilon,g) \leq \frac{\int_{M\setminus B_\epsilon}|\nabla v_\epsilon|^2dv_g}{\int_{\partial M\setminus \partial B_\epsilon}v_\epsilon^2ds_g}.
\end{gather}
Let $v_\epsilon=\sum^k_{i=1}c_iu_{i,\epsilon}$. We build the following function $v=\sum^k_{i=1}c_iu_{i} \in V_k\subset H^1(M,g)$. Note that $\nabla v_\epsilon=\sum^k_{i=1}c_i\nabla u_{i,\epsilon}=\sum^k_{i=1}c_i\nabla u_{i}=\nabla v$ on $M\setminus B_\epsilon$. Thus $\int_{M\setminus B_\epsilon}|\nabla v_\epsilon|^2dv_g=\int_{M\setminus B_\epsilon}|\nabla v|^2dv_g\to \int_{M}|\nabla v|^2dv_g$ as $\epsilon\to 0$. Moreover, it is easy to see that
\begin{gather*}
\int_{\partial M\setminus \partial B_\epsilon}v_\epsilon^2ds_g=\sum_ic^2_i\Big(\int_{\partial M\setminus\partial B_\epsilon}u^2_idv_g-\frac{1}{L(\partial M\setminus \partial B_\epsilon,g)}\Big(\int_{\partial M\setminus \partial B_\epsilon}u_ids_g\Big)^2\Big)+\\+\sum_{i\neq j}2c_ic_j\Big(\int_{\partial M\setminus \partial B_\epsilon}u_iu_jds_g-\frac{1}{L(\partial M\setminus \partial B_\epsilon,g)}\int_{\partial M\setminus \partial B_\epsilon}u_ids_g\int_{\partial M\setminus \partial B_\epsilon}u_jds_g\Big),
\end{gather*}
which converges to $\int_{\partial M}v^2ds_g$ as $\epsilon \to 0$. Then \eqref{forclaim} implies
$$
\limsup_{\epsilon \to 0}\sigma^N_k(M\setminus B_\epsilon,g) \leq \limsup_{\epsilon \to 0} \frac{\int_{M\setminus B_\epsilon}|\nabla v_\epsilon|^2dv_g}{\int_{\partial M\setminus \partial B_\epsilon}v_\epsilon^2ds_g}= \frac{\int_{M}|\nabla v|^2dv_g}{\int_{\partial M}v^2ds_g} \leq \sigma_k(M,g).
$$
\end{proof}

Now we are ready to prove the Lemma. The proof is similar to the proof of \cite[Lemma 3.2]{matthiesen2020sharp}. Let $u_\epsilon$ be a normalized $\sigma^N_k-$eigenfunction. By Claim 2 $u_\epsilon$ are uniformly bounded. If $B_\epsilon \cap \partial M=\O$ then we take the harmonic continuation into $B_\epsilon$. It is known that the operators of harmonic continuation into $B_\epsilon$ are uniformly bounded (see \cite[Example 1]{rauch1975potential}). Otherwise we extend $u_\epsilon$ into $B_\epsilon$ by $\mathcal E(u_\epsilon)$. By Claim 1 these operators are also uniformly bounded. Therefore, we get a uniformly bounded in $H^1(M,g)$ sequence $\{\tilde u_\epsilon\}$. Then there exists $\epsilon_l\to 0$ such that $\tilde u_{\epsilon_l} \rightharpoonup u$ in $H^1(M,g)$. Thus, $\tilde u_{\epsilon_l} \to u$ in $L^2(M,g)$ by the Rellich-Kondrachov embedding theorem. The standard elliptic estimates imply $u_{\epsilon_l} \to u$ in $C^\infty_{loc}(M\setminus\{p\})$. Consider a function $\varphi \in C^\infty_c(M\setminus\{p\})$ such that $supp(\varphi) \subset M\setminus B_R$ for a ball $B_R$ centred at $p$ with $R$ fixed. Extracting a subsequence by Claim 2 one can assume that $\sigma^N_k(M\setminus B_{\epsilon_l},g)\to \sigma$. Then we have
\begin{gather*}
\int_M\langle \nabla u, \nabla \varphi \rangle dv_g= \lim_{l\to 0}\int_{M\setminus B_R}\langle \nabla u_{\epsilon_l}, \nabla \varphi \rangle dv_g=\\=\lim_{l\to 0}\sigma^N_k(M\setminus B_{\epsilon_l},g)\int_{M\setminus B_R} u_{\epsilon_l} \varphi dv_g=\sigma\int_Mu\varphi dv_g.
\end{gather*}
Hence $u$ is an eigenfunction with eigenvalue $\sigma$. Thus all accumulation points of $\{\sigma^N_k(M\setminus B_{\epsilon_l},g)\}$ are in the Steklov spectrum of $M$. Our aim now is to show that $\sigma=\sigma_k(M,g)$. We will do this by showing that the $u$ is orthogonal in $L^2(\partial M,g)$ to the first $k-1$ Steklov eigenfunctions of $(M,g)$. We use the proof by induction.

Let $u_\epsilon$ be a first Steklov-Neumann eigenfunction of $(M\setminus B_\epsilon,g)$. We have already shown that $\tilde u_\epsilon \rightharpoonup u$ in $H^1(M,g)$ then by the trace embedding theorem one has $\tilde u_\epsilon \to u$ in $H^{1/2}(\partial M,g)$ and hence in $L^2(\partial M,g)$. In particular, one has $||u_\epsilon-u||_{L^2(\partial M\setminus\partial B_\epsilon,g)}\to 0$ as $\epsilon\to 0$. Then 
\begin{gather*}
|\int_{\partial M\setminus\partial B_\epsilon}(u_\epsilon -u)ds_g| \leq \int_{\partial M\setminus\partial B_\epsilon}|u_\epsilon -u|ds_g \leq \\ \leq L(\partial M\setminus \partial B_\epsilon,g)^{1/2}||u_\epsilon-u||_{L^2(\partial M\setminus\partial B_\epsilon,g)}^{1/2},
\end{gather*}
which converges tp $0$ as $\epsilon \to 0$. Since $\int_{\partial M\setminus\partial B_\epsilon}u_\epsilon ds_g=0$ one then has that $$\lim_{\epsilon\to 0}\int_{\partial M\setminus\partial B_\epsilon}uds_g=\int_{\partial M}uds_g=0.$$ Therefore, $u$ cannot be a constant and since by claim 2 $\limsup_{\epsilon\to 0}\sigma^N_1(M\setminus B_\epsilon,g)=\sigma \leq \sigma_1(M,g)$ and $\sigma$ belongs to the Steklov spectrum of $(M,g)$ we conclude that $u$ is a first Steklov eigenfunction of $(M,g)$ and $\sigma=\sigma_1(M,g)$.

Now suppose that $\limsup_{\epsilon\to 0}\sigma^N_i(M\setminus B_\epsilon,g)= \sigma_i(M,g)$ for any $i<k$. Let $u_\epsilon$ be a $k-$th Steklov-Neumann eigenfucntion of $(M\setminus B_\epsilon,g)$. Since $\tilde u_\epsilon \rightharpoonup u$ in $H^1(M,g)$ then the trace embedding theorem implies that $\tilde u_\epsilon \to u$ in $H^{1/2}(\partial M,g)$ in particular $\tilde u_\epsilon \to u$ in $L^{2}(\partial M,g)$ whence $||u_\epsilon-u||_{L^2(\partial M\setminus \partial B_\epsilon,g)}\to 0$. Let $v_\epsilon$ be an $i-$th Steklov-Neumann eigenfunction of $(M\setminus B_\epsilon,g)$ with $i<k$. Then $\int_{\partial M\setminus \partial B_\epsilon}u_\epsilon v_\epsilon ds_g=0$ moreover we have supposed that $v$ is an $i-$th Steklov eigenfunction of $(M,g)$. One has
\begin{gather*}
|\int_{\partial M\setminus\partial B_\epsilon}(u_\epsilon v_\epsilon -uv)ds_g| \leq \\ \leq \int_{\partial M\setminus\partial B_\epsilon}|u_\epsilon v_\epsilon -uv|ds_g= \int_{\partial M\setminus\partial B_\epsilon}|u_\epsilon v_\epsilon -u_\epsilon v+u_\epsilon v-uv|ds_g \leq \\ \leq \int_{\partial M\setminus\partial B_\epsilon}|u_\epsilon (v_\epsilon -v)|ds_g+\int_{\partial M\setminus\partial B_\epsilon}|v (u_\epsilon -u)|ds_g \leq \\ \leq \Big(\int_{\partial M\setminus\partial B_\epsilon}u^2_\epsilon ds_g \Big)^{1/2}\Big(\int_{\partial M\setminus\partial B_\epsilon}(v_\epsilon-v)^2 ds_g \Big)^{1/2}+\\+\Big(\int_{\partial M\setminus\partial B_\epsilon}v^2_\epsilon ds_g \Big)^{1/2}\Big(\int_{\partial M\setminus\partial B_\epsilon}(u_\epsilon-u)^2 ds_g \Big)^{1/2}\to 0~\text{as $\epsilon \to 0$}.
\end{gather*}
Hence $\int_{\partial M\setminus\partial B_\epsilon}u_\epsilon v_\epsilon ds_g\to \int_{\partial M}u v ds_g$ as $\epsilon \to 0$. But $\int_{\partial M\setminus\partial B_\epsilon}u_\epsilon v_\epsilon ds_g=0$ for all $\epsilon$. Thus $\int_{\partial M}u v ds_g=0$. We conclude that $u$ is orthogonal in $L^2(\partial M,g)$ to the first $k-1$ Steklov eigenfunctions. Thus $\sigma=\sigma^N_k(M,g)$.

\end{proof}

We endow the set of Riemannian metrics on $\Sigma$ with the $C^\infty-$topology. Then the following "continuity" result holds. 

\begin{proposition}\label{N-cont}
Let $\Sigma$ be a surface with boundary and $\Omega\subset \Sigma$ be a Lipschitz domain. Let the sequence of Riemannian metrics $g_m$ on $\Sigma$ converge in $C^\infty-$topology to the metric $g$. Then $\sigma^*_k(\Sigma,[g_m])\to\sigma^*_k(\Sigma,[g])$. Similarly, if $h_m|_{\overline\Omega}$ converge to $g|_{\overline\Omega}$ in $C^\infty$-topology, then $\sigma^{N*}_k(\Omega, \partial^S\Omega, [h_m|_{\overline\Omega}])\to\sigma^{N*}_k(\Omega,\partial^S\Omega, [g|_{\overline\Omega}])$. 
\end{proposition}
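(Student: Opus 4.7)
\emph{Proof proposal.} The strategy is to set up a natural bijection between the conformal classes $[g]$ and $[g_m]$ and to show that the normalized Rayleigh quotients on paired metrics are within a multiplicative factor $1+o(1)$ of each other, \emph{uniformly in the conformal factor}. Writing every metric in $[g]$ as $\tilde g = e^{2\phi}g$ for $\phi\in C^\infty(\Sigma)$, we pair it with $\tilde g_m = e^{2\phi}g_m\in[g_m]$.

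Fix $\phi\in C^\infty(\Sigma)$. Since the Dirichlet energy is conformally invariant on surfaces, $\int_\Sigma|\nabla u|^2_{\tilde g}\,dv_{\tilde g}=\int_\Sigma|\nabla u|^2_{g}\,dv_g$ and analogously for $g_m$. The hypothesis $g_m\to g$ in $C^\infty$ gives in particular uniform $C^0$-convergence $g_m^{ij}\to g^{ij}$ and $\sqrt{\det g_m}\to\sqrt{\det g}$ on $\Sigma$, which for every $\varepsilon>0$ and $m$ large enough yields
\begin{equation*}
(1-\varepsilon)\int_\Sigma|\nabla u|^2_{\tilde g}\,dv_{\tilde g}\le\int_\Sigma|\nabla u|^2_{\tilde g_m}\,dv_{\tilde g_m}\le(1+\varepsilon)\int_\Sigma|\nabla u|^2_{\tilde g}\,dv_{\tilde g}
\end{equation*}
for every $u\in H^1(\Sigma)$, uniformly in $\phi$ (the conformal factor cancels at the invariance step). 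Similarly, $ds_{g_m}/ds_g\to 1$ uniformly on $\partial\Sigma$, and since the weight $e^\phi$ appears in both $ds_{\tilde g_m}=e^\phi ds_{g_m}$ and $ds_{\tilde g}=e^\phi ds_g$, we get
\begin{equation*}
(1-\varepsilon)\int_{\partial\Sigma}u^2\,ds_{\tilde g}\le\int_{\partial\Sigma}u^2\,ds_{\tilde g_m}\le(1+\varepsilon)\int_{\partial\Sigma}u^2\,ds_{\tilde g},
\end{equation*}
as well as $(1-\varepsilon)L_{\tilde g}(\partial\Sigma)\le L_{\tilde g_m}(\partial\Sigma)\le(1+\varepsilon)L_{\tilde g}(\partial\Sigma)$, all uniformly in $\phi$.

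Inserting these three uniform comparisons into the min-max characterization of $\sigma_k$, preferably in the variance-type form with denominator $\int_{\partial\Sigma}(u-\bar u)^2\,ds$ so that the trial space is metric-independent, yields $\overline\sigma_k(\Sigma,\tilde g_m)=(1+\delta_m(\phi))\overline\sigma_k(\Sigma,\tilde g)$ with $\sup_\phi|\delta_m(\phi)|\to 0$. Taking the supremum over $\phi$ on both sides along the bijection $[g]\leftrightarrow[g_m]$ gives $\sigma^*_k(\Sigma,[g_m])\to\sigma^*_k(\Sigma,[g])$.

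The Steklov--Neumann statement follows by repeating the same argument on $\Omega$, using the Rayleigh quotient \eqref{charSN}. The only point requiring a little extra care is that the admissible space $\mathcal H^1(\Omega)$ in \eqref{charSN} depends on the metric through the mean-zero constraint $\int_{\partial^S\Omega}u\,ds=0$; this is handled either by the variance-type reformulation mentioned above, or by noting that the $H^1$-orthogonal projection from $\mathcal H^1(\Omega,g)$ onto $\mathcal H^1(\Omega,h_m|_{\overline\Omega})$ converges to the identity as $h_m\to g$ in $C^\infty$ and can therefore be absorbed into the $1+o(1)$ factor. I expect the only substantive issue in the proof to be precisely this verification that every approximation is uniform in $\phi$, which is made possible by the multiplicative appearance of $e^\phi$ on both sides of each relevant integral.
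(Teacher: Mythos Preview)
Your proposal is correct and follows essentially the same approach as the paper: pair $e^{2\phi}g\in[g]$ with $e^{2\phi}g_m\in[g_m]$ and use that the multiplicative comparison between the relevant integrals is uniform in $\phi$ because the conformal factor appears identically on both sides. The paper packages the eigenvalue comparison by invoking a quasi-isometry bound on Steklov eigenvalues from \cite{colbois2016steklov} rather than comparing Rayleigh quotients by hand, but the substance is the same; your explicit treatment of the mean-zero constraint in the Steklov--Neumann case is in fact more careful than the paper, which simply asserts that the argument is identical.
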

\begin{proof}
We provide a proof for the functional $\sigma^*_k(\Sigma,[g])$. The proof for the functional $\sigma^{N*}_k(\Omega,[g|_{\overline\Omega}])$ follows the exactly same arguments.

Choose any $\varepsilon>0$ and consider $m$ large enough. One has 
$$
\frac{1}{1+\varepsilon} f g_m(v,v) \leq f g(v,v) \leq (1+\varepsilon) f g_m(v,v),\quad \forall v \in \Gamma(TM\setminus\{0\}),
$$
where $f$ is any positive smooth function on $\Sigma$.
Then by \cite[Proposition 32]{colbois2016steklov} one has
$$
\frac{1}{(1+\varepsilon)^{6}}\bar \sigma_k(\Sigma,fg_m) \leq\bar \sigma_k(\Sigma,fg) \leq (1+\varepsilon)^{6}\bar \sigma_k(\Sigma,fg_m).
$$
Taking the supremum over all $f$ yields
$$
\frac{1}{(1+\varepsilon)^{6}}\sigma^*_k(\Sigma,[g_m]) \leq\sigma^*_k(\Sigma,[g]) \leq (1+\varepsilon)^{6}\sigma^*_k(\Sigma,[g_m]),
$$
which completes the proof since this inequality holds for any $\varepsilon>0$.
\end{proof}
\subsection{Discontinuous metrics}\label{main lemma proof}

Let $\Sigma$ be a compact surface with boundary. Consider a set of pairwise disjoint Lipschitz domains $\{\Omega_i\}^s_{i=1}$ in $\Sigma$ such that $\Sigma=\bigcup^s_{i=1} \overline\Omega_i$. Let $C^{\infty}_+(\Sigma,\{\Omega_i\})$ denote a set of functions on $\bigcup^s_{i=1} \overline\Omega_i$ such that $\rho\in C^{\infty}_+(\Sigma,\{\Omega_i\})$ means that $\rho|_{\Omega_i} = \rho_i\in C^\infty(\overline\Omega_i)$ are positive for every $i$. Similarly, $C^{\infty}(\Sigma,\{\Omega_i\})$ denotes a set of ''smooth'' functions on $\bigcup^s_{i=1} \overline\Omega_i$. We introduce discontinuous metrics on $\Sigma$ defined as $\rho g\in[g]$, where $\rho\in C^{\infty}_+(\Sigma,\{\Omega_i\})$ and $g$ is a genuine Riemannian metric. The set $C^{k}(\Sigma,\{\Omega_i\})$ of functions which are of class $C^k$ in every $\overline\Omega_i$ is defined in a similar way. The Steklov spectrum of the metric $\rho g$ is defined as the set of critical values of the Rayleigh quotient
$$
R_{\rho g}[\varphi]=\frac{\displaystyle\int_\Sigma | \nabla_g \varphi|^2_g dv_g}{\displaystyle\int_{\partial \Sigma} \rho^{\frac{1}{2}}  \varphi^2 ds_g}.
$$ 
This is the Rayleigh quotient of the \textit{Steklov problem with density $\rho$.} The Steklov spectrum with density $\rho$ is well-defined for any non-negative $\rho \in L^\infty(\Sigma,g)$ (see~\cite[Proposition 1.3]{kokarev2014variational}). Elliptic regularity implies that the eigenfunctions are at least $1/2-$H\"older continuous on $\partial\Sigma$. Therefore, Steklov eigenvalues of the metric $\rho g$ admit the following variational characterization
$$
\sigma_k(\Sigma,\rho g) = \inf_{E_{k+1}} \sup_{\varphi\in E_{k+1}} R_{\rho g}[\varphi],
$$
where $E_{k+1}$ ranges over all $(k+1)$-dimensional subspaces of $C^0(\Sigma)$.


We introduce the following notation
\begin{align*}
\sigma^*_k(\Sigma,\{\Omega_i\},[g])=\sup \{ \bar\sigma_k(\rho g)~\vert~ \rho \in  C^{\infty}_+(\Sigma,\{\Omega_i\})\},
\end{align*}
where $\bar\sigma_k(\rho g)$ is the normalized $k$-th eigenvalue given by
$$
\bar\sigma_k(\rho g) = \sigma_k(\rho g) L_{\rho g}(\partial \Sigma).
$$

The following lemma particularly asserts that the quantity $\sigma^*_k(\Sigma,\{\Omega_i\},[g])$ is well-defined.

\begin{lemma}\label{identity}
Let $(\Sigma,g)$ be a Riemannian surface with boundary. Consider a set of pairwise disjoint Lipschitz domains $\Omega_i$ such that $\Sigma=\bigcup^s_{i=1} \overline\Omega_i$. Then one has
\begin{align*}
\sigma^*_k(\Sigma,\{\Omega_i\},[g])=\sigma^*_k(\Sigma,[g])
\end{align*}
\end{lemma}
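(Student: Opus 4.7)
The plan is to prove the two inequalities separately. The inequality $\sigma^*_k(\Sigma,[g]) \leq \sigma^*_k(\Sigma,\{\Omega_i\},[g])$ is trivial because $C^\infty_+(\Sigma) \subset C^\infty_+(\Sigma,\{\Omega_i\})$: every globally smooth positive density restricts to a smooth positive density on each piece $\overline{\Omega_i}$, so the supremum over the larger class is at least as big. The content of the lemma is therefore the reverse inequality.

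For the reverse inequality I would take an arbitrary $\rho \in C^\infty_+(\Sigma,\{\Omega_i\})$ and approximate it by genuine smooth positive densities. Set $S := \bigcup_i \partial \Omega_i$ and let $N_n$ be a $(1/n)$-tubular $g$-neighbourhood of $S$. Define $\rho_n$ by leaving $\rho$ unchanged outside $N_n$ and replacing it inside $N_n$ by a mollification (via a finite atlas and a partition of unity) that is pinched between the positive global bounds $\inf_\Sigma \rho$ and $\sup_\Sigma \rho$. Then $\rho_n \in C^\infty_+(\Sigma)$, $\rho_n \to \rho$ a.e.\ on $\Sigma$ and on $\partial\Sigma$, and the sequence is uniformly bounded away from $0$ and $\infty$. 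Dominated convergence then gives $L_{\rho_n g}(\partial \Sigma) \to L_{\rho g}(\partial \Sigma)$, and more generally $\rho_n^{1/2} \to \rho^{1/2}$ in $L^p(\partial \Sigma, g)$ for every finite $p$.

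The heart of the argument is to show that $\sigma_k(\rho_n g) \to \sigma_k(\rho g)$, so that combined with the length convergence we obtain $\bar\sigma_k(\rho_n g) \to \bar\sigma_k(\rho g)$. Since the Dirichlet energy in dimension two is conformally invariant, the numerator of the Rayleigh quotient is independent of the density; only the denominator $\int_{\partial \Sigma} \rho^{1/2} \varphi^2 \, ds_g$ is affected. For the $\limsup$ bound I would fix $\delta > 0$, choose a $(k+1)$-dimensional subspace $E \subset C^0(\Sigma)$ that is nearly optimal for $\sigma_k(\rho g)$ in the variational characterization, and observe that on the $L^2(\rho\,ds_g)$-unit sphere of $E$ the traces $\varphi|_{\partial\Sigma}$ are uniformly bounded in $L^{2p'}(\partial \Sigma)$ for every $p' \in (1,\infty)$ (finite dimension plus the trace embedding). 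H\"older's inequality against $\rho_n^{1/2}-\rho^{1/2} \to 0$ in $L^{p}(\partial \Sigma)$ then makes the Rayleigh quotient for $\rho_n g$ uniformly close to that for $\rho g$ on $E$, yielding $\limsup_n \sigma_k(\rho_n g) \leq \sigma_k(\rho g) + \delta$. The $\liminf$ bound follows by the symmetric argument: choose nearly optimal $(k+1)$-dimensional spaces $E_n$ for $\rho_n g$, use the uniform bound on $\sigma_k(\rho_n g)$ from the same H\"older estimate to extract a weak limit, and pass to the limit in the variational inequality using the lower semicontinuity of the Dirichlet energy and the $L^p$ convergence of the densities.

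The main technical obstacle is this uniform convergence of Rayleigh quotients on finite-dimensional subspaces: the densities $\rho_n^{1/2}$ do \emph{not} converge uniformly to $\rho^{1/2}$ along $\partial \Sigma$, since $\rho$ genuinely jumps across the singular locus $S \cap \partial \Sigma$. The remedy is to work in $L^p$ for large finite $p$ and exploit the higher integrability of traces of $H^1(\Sigma)$-functions restricted to a fixed finite-dimensional subspace. Once $\bar\sigma_k(\rho_n g) \to \bar\sigma_k(\rho g)$ is established, then $\bar\sigma_k(\rho g) \leq \sigma^*_k(\Sigma,[g])$ because each $\rho_n \in C^\infty_+(\Sigma)$, and taking the supremum over $\rho \in C^\infty_+(\Sigma,\{\Omega_i\})$ gives the desired inequality and completes the proof.
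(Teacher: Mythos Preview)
Your overall strategy---smooth out $\rho$ near the interface, use $L^p$ convergence of $\rho_n^{1/2}$ on $\partial\Sigma$, and exploit the higher integrability of traces---is exactly the mechanism the paper uses, and your trivial inequality is the same. The difference is in the execution of the hard direction, and there your argument is incomplete.

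You aim to prove full convergence $\sigma_k(\rho_n g)\to\sigma_k(\rho g)$. Note first that only the inequality $\liminf_n\sigma_k(\rho_n g)\ge\sigma_k(\rho g)$ is needed for the lemma (together with length convergence it gives $\bar\sigma_k(\rho g)\le\sigma_k^*(\Sigma,[g])$), so your carefully explained $\limsup$ bound is actually the unnecessary direction. For the $\liminf$ bound you propose to take nearly optimal $(k{+}1)$-dimensional spaces $E_n$ for $\rho_n g$ and extract weak limits. Two issues: (i) to extract weak $H^1$ limits you need a uniform $L^2(\Sigma)$ bound on the normalized test functions, not just a Dirichlet energy bound; this requires a Poincar\'e-type inequality driven by the boundary constraint (orthogonality to constants in $L^2(\partial\Sigma,\rho_n^{1/2})$), which you do not mention and which must be shown uniform in $n$; (ii) even with weak limits of a basis of $E_n$, the limiting space may drop below dimension $k{+}1$, so one cannot directly test $\sigma_k(\rho g)$ with it. Both problems are surmountable but nontrivial.

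The paper avoids these difficulties by following Friedlander--Nadirashvili: it never proves convergence of eigenvalues, only the one-sided estimate $\sigma_k(\rho_\delta g)\ge\sigma_k(\rho g)-\varepsilon$. It fixes the set $S$ of functions $L^2(\partial\Sigma,\rho^{1/2})$-orthogonal to the first $k$ eigenspaces of the \emph{fixed} density $\rho$, and shows the functional $\mathcal F_{\rho_\delta}[u]=\int_\Sigma|\nabla u|^2-(\sigma_k(\rho g)-\varepsilon)\int_{\partial\Sigma}\rho_\delta^{1/2}u^2$ is nonnegative on $S$. The key device is a split $S=S_1\cup S_2$ by the size of the Dirichlet energy: on $S_1$ (large energy) the crude pointwise bound $\rho_\delta\le (b/a)\rho$ suffices; on $S_2$ (bounded energy) the boundary orthogonality $\int_{\partial\Sigma}u\rho^{1/2}=0$ yields, via a generalized Poincar\'e inequality, a uniform $H^1$ bound and hence a uniform $L^p(\partial\Sigma)$ trace bound, against which the H\"older estimate with $\rho_\delta^{1/2}-\rho^{1/2}\to0$ in $L^q$ does the work. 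This $S_1/S_2$ split is precisely what supplies the uniform control your weak-limit argument is missing.
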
 
\begin{proof}
The proof follows the same steps as the proof of Lemma 2 in the paper \cite{MR1717641}. We provide it here.

Since the set of discontinuous metrics is larger than the set of continuous ones, we have $\sigma^*_k(\Sigma,\{\Omega_i\},[g])) \geq \sigma^*_k(\Sigma,[g])$. Therefore, we have to prove that  
\begin{gather*}
\sigma^*_k(\Sigma,\{\Omega_i\},[g])) \leq \sigma^*_k(\Sigma,[g]),
\end{gather*}
which is equivalent to
\begin{align}\label{lambda_inequality}
\sigma_k(\Sigma,\rho g) \leq \sigma^*_k(\Sigma,[g]), 
\end{align}
where $\rho \in C^{\infty}_{+}(\Sigma,\{\Omega_i\})$ and $ \int_{\partial\Sigma} \rho^{1/2}ds_g =1$. 



Let $E_k$ be the eigenspace corresponding to the $k$-th Steklov eigenvalue of the metric $\rho g$. We put

\begin{align*}
S=\{u\in H^1(\Sigma,\rho g)~|~u \perp_{L^2(\partial\Sigma, \rho g)} E_0,\dots,E_{k-1}, \int_{\partial\Sigma} \rho^{1/2}u^2 ds_g =1\}
\end{align*}
For any $\varepsilon>0$ we consider the functional
\begin{align*}
\mathcal{F}_\rho [u]:=\int_{\Sigma}|\nabla_gu|^2dv_g-(\sigma_k(\Sigma,\rho g)-\varepsilon)\int_{\partial\Sigma}\rho^{1/2}u^2ds_g.
\end{align*}
It immediately follows that $\mathcal{F}_\rho[u] \geq\varepsilon, \forall u\in S$. 

Let $0<a:=\min_{\cup\{\Omega_i\}}\rho$ and $\max_{\cup\{\Omega_i\}}=:b<\infty$. We define a smooth non-decreasing function $\chi(t)$ on $\mathbb{R}_+$ that equals zero if $t<1/2$ and equals 1 when $t>1$ and define the following parametrized family of functions:
\begin{equation*}
\rho_\delta(x) = 
 \begin{cases}
   \rho(x) &\text{if $x \notin U$}\\
   \rho(x)\chi \Big(\frac{d^2(x)}{\delta}\Big)+b\Big(1-\chi \Big(\frac{d^2(x)}{\delta}\Big)\Big) &\text{if $x \in U$}
 \end{cases}
\end{equation*}
where $d$ is the distance function from a point $x\in \Sigma$ to $\cup\{\partial\Omega_i\cap \partial \Omega_j\},~i\neq j$ and $U$ is a sufficiently small tubular neighborhood of $\cup\{\partial\Omega_i\cap \partial \Omega_j\},~i\neq j$ where $d^2$ is smooth. We have:

\begin{enumerate}[(i)]
\item $\Big(\frac{a}{b}\Big)\rho \leq \rho_\delta  \leq \Big(\frac{b}{a}\Big) \rho$;
\item $\lim_{\delta \to 0} \int_{\partial\Sigma} \rho^{1/2}_\delta  ds_g=1$;
\item $\lim_{\delta \to 0} \int_{\partial\Sigma} |\rho^{1/2}_\delta - \rho^{1/2}|^q ds_g=0, \forall q<\infty$.
\end{enumerate} 
We want to prove that $\mathcal{F}_{\rho_\delta}[u] \geq 0, \forall u \in S$.

Consider $T=(\sigma_k(\Sigma,\rho g) -\varepsilon)\sqrt\frac{b}{a}$ and divide the set $S$ into two parts $S_1$ and $S_2$:

\begin{gather*}
S_1:=\{u \in S | \int_{\Sigma} |\nabla_g u|^2 dv_g \geq T\}, \\
S_2:=S \setminus S_1= \{u \in S | \int_{\Sigma} |\nabla_g u|^2 dv_g < T\}.
\end{gather*} 

If $u \in S_1$ then
\begin{gather*}
\mathcal{F}_{\rho_\delta}[u]=\int_{\Sigma}|\nabla_gu|^2dv_g-(\sigma_k(\Sigma,\rho g)-\varepsilon)\int_{\partial\Sigma}\rho_\delta^{1/2}u^2ds_g \geq\\ \geq (\sigma_k(\Sigma,\rho g)-\varepsilon)\Big(\sqrt\frac{b}{a}-\int_{\partial\Sigma}\rho_\delta^{1/2}u^2ds_g\Big) \geq \\ \geq(\sigma_k(\Sigma,\rho g)-\varepsilon)\sqrt\frac{b}{a}(1-\int_{\partial\Sigma}\rho^{1/2}u^2ds_g)=0.
\end{gather*}

Let us show that $||u||_{L^p(\partial \Sigma,g)}$ with $p\geq 2$ is bounded for any $u \in S_2$. We consider the following operator $L[u]:=\int_{\partial \Sigma}u\rho^{1/2}ds_g$. For this operator one has
 $$
 |L[u]|\leq C\int_{\partial\Sigma}|u|ds_g\leq C||u||_{L^2(\partial\Sigma,g)}\leq C||u||_{H^1(\Sigma,g)},
 $$
which implies that $L\in H^{-1}(\Sigma,g)$. Here we used in order the boundedness of $\rho$, the Cauchy-Schwarz and the trace inequalities. We also used the convention that $C$ denotes any positive constant depending only on $\Sigma$. \cite[Lemma 8.3.1]{MR1411441} applied to the operator $L$ implies that there exists a constant $C>0$ depending only on $\Sigma$ such that
$$
||u||^2_{L^2(\Sigma,g)}\leq C||\nabla u||^2_{L^2(\Sigma,g)}<CT,
$$
where we used the fact that $L[u]=0~\forall u\in S$. By the trace theorem one then has
$$
||u||^2_{H^{1/2}(\partial\Sigma,g)}\leq C'||u||^2_{H^{1}(\Sigma,g)}<C'',
$$
where $C''=C'(CT+T)$. Finally by the Sobolev embedding theorem (see for instance \cite[Theorem 6.9]{MR2944369}) we get
$$
||u||_{L^p(\partial \Sigma,g)}\leq C'''||u||_{H^{1/2}(\partial\Sigma,g)}<\tilde C~\forall 2\leq p<\infty,
$$
where $\tilde C=C'''\sqrt{C''}$. 
Therefore, if $u \in S_2$ then
\begin{gather*}
\mathcal{F}_{\rho_\delta}[u]=\int_{\Sigma}|\nabla_gu|^2dv_g-(\sigma_k(\Sigma,\rho g)-\varepsilon)\int_{\partial\Sigma}\rho_\delta^{1/2}u^2ds_g =\\=\int_{\Sigma}|\nabla_gu|^2dv_g- (\sigma_k(\Sigma,\rho g)-\varepsilon)-(\sigma_k(\Sigma,\rho g)-\varepsilon)\int_{\partial\Sigma}(\rho_\delta^{1/2}-\rho^{1/2})u^2ds_g \geq \\ \geq \varepsilon-(\sigma_k(\Sigma,\rho g)-\varepsilon)\Big(\int_{\partial\Sigma}(\rho_\delta^{1/2}-\rho^{1/2})^qds_g\Big)^{1/q}\Big(\int_{\partial\Sigma}|u|^pds_g\Big)^{2/p} \geq\\ \geq \varepsilon-(\sigma_k(\Sigma,\rho g)-\varepsilon)\frac{\varepsilon}{\sigma_k(\Sigma,\rho g)-\varepsilon}=0.
\end{gather*}
In the last inequality we put 
$$\Big(\int_{\partial\Sigma}(\rho_\delta^{1/2}-\rho^{1/2})^qds_g\Big)^{1/q}\Big(\int_{\partial\Sigma}|u|^pds_g\Big)^{2/p}=\frac{\varepsilon}{\sigma_k(\Sigma,\rho g)-\varepsilon}
$$
 since $\int_{\partial\Sigma}(\rho_\delta^{1/2}-\rho^{1/2})^qds_g \to 0$ as $\delta \to 0$ and $\int_{\partial\Sigma}|u|^pds_g<+\infty$. 


Hence, $\mathcal{F}_{\rho_\delta}[u] \geq 0, \forall u \in S$ which implies $\sigma_k(\Sigma,\rho_\delta g) \geq \sigma_k (\Sigma, \rho g) -\varepsilon$. We then have
\begin{gather*}
\bar\sigma_k(\Sigma,\rho_\delta g)=\sigma_k(\Sigma,\rho_\delta g)L_{\rho_\delta g}(\partial \Sigma)  \geq \sigma_k (\Sigma, \rho g) L_{\rho_\delta g}(\partial \Sigma) -\varepsilon L_{\rho_\delta g}(\partial \Sigma).
\end{gather*}



Therefore, $\sigma^*_k (\Sigma, [g]) \geq \sigma_k (\Sigma, \rho g) L_{\rho_\delta g}(\partial \Sigma) -\varepsilon L_{\rho_\delta g}(\partial \Sigma)$. Letting $\delta \to 0$ one then gets  $\sigma^*_k (\Sigma, [g]) \geq \sigma_k (\Sigma, \rho g)-\varepsilon$ that implies \eqref{lambda_inequality} since $\varepsilon$ is arbitrary small.
\end{proof}

Lemma \ref{identity} implies the following lemma whose proof is postponed to Section \ref{appendix2}.

\begin{lemma}
\label{identity2}
Let $(\Sigma,g)$ be a Riemannian surface with boundary. Consider a set of pairwise disjoint domains $\Omega_i$ such that $\Sigma=\bigcup^s_{i=1} \overline\Omega_i$ and $\Omega_i\cap \partial\Sigma=\partial^S\Omega_i$. Let $(\Omega,h) = \sqcup(\overline\Omega_i,g|_{\overline\Omega_i})$ and $\partial^S\Omega=\sqcup \partial^S\Omega_i$. Then for all $k \geq 0$ one has
$$
\sigma^*_k(\Sigma,[g]) \geq \sigma^{N*}_k(\Omega,\partial^S\Omega, [h]).
$$
\end{lemma}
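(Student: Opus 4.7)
The plan is to realize the right-hand side supremum by discontinuous metrics on $\Sigma$ that are smooth on each piece $\overline{\Omega}_i$, and then compare Rayleigh quotients directly, using Lemma~\ref{identity} to pass between continuous and piecewise smooth densities.

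\textbf{Step 1 (Transfer of densities).} Since $\Omega=\sqcup\overline{\Omega}_i$ is a disjoint union, a smooth positive density $\rho\in C^\infty_+(\Omega)$ is literally the same data as an element of $C^\infty_+(\Sigma,\{\Omega_i\})$. Fix such a $\rho$. The Dirichlet energy is conformally invariant in dimension two, so
\[
\int_\Sigma|\nabla u|^2\,dv_g=\sum_i\int_{\Omega_i}|\nabla u|^2\,dv_g=\int_\Omega|\nabla u|^2\,dv_h,
\]
and for the boundary integral $\int_{\partial\Sigma}\rho^{1/2}u^2\,ds_g=\sum_i\int_{\partial^S\Omega_i}\rho^{1/2}u^2\,ds_h=\int_{\partial^S\Omega}\rho^{1/2}u^2\,ds_h$, since $\partial\Sigma=\sqcup\partial^S\Omega_i$ by hypothesis. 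In particular the Rayleigh quotients on $(\Sigma,\rho g)$ and $(\Omega,\rho h)$ coincide on functions that are continuous across the interior interfaces $\partial^N\Omega_i$, and the boundary lengths satisfy $L_{\rho g}(\partial\Sigma)=L_{\rho h}(\partial^S\Omega)$.

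\textbf{Step 2 (Inclusion of test spaces).} Restriction provides an injection $H^1(\Sigma)\hookrightarrow H^1(\Omega)=\bigoplus_i H^1(\Omega_i)$: a function in $H^1(\Sigma)$ has no jump across $\partial^N\Omega_i$, so it lies in $H^1$ of each piece. The condition $\int_{\partial\Sigma}\rho^{1/2}u\,ds_g=0$ is identical to $\int_{\partial^S\Omega}\rho^{1/2}u\,ds_h=0$, so this inclusion respects the admissible subspaces appearing in the variational characterization~\eqref{charSN}. Applying the min-max formula for both spectra — observing that taking the infimum over a \emph{smaller} class of $(k+1)$-dimensional subspaces can only \emph{increase} the value — we obtain
\[
\sigma_k(\Sigma,\rho g)\;\geq\;\sigma^N_k(\Omega,\partial^S\Omega,\rho h).
\]

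\textbf{Step 3 (Normalization and supremum).} Multiplying the inequality by the common boundary length from Step~1 yields
\[
\overline{\sigma}_k(\Sigma,\rho g)\;\geq\;\overline{\sigma}^N_k(\Omega,\partial^S\Omega,\rho h).
\]
Taking the supremum over all $\rho\in C^\infty_+(\Sigma,\{\Omega_i\})=C^\infty_+(\Omega)$ gives
\[
\sigma^*_k(\Sigma,\{\Omega_i\},[g])\;\geq\;\sigma^{N*}_k(\Omega,\partial^S\Omega,[h]),
\]
and Lemma~\ref{identity} identifies the left-hand side with $\sigma^*_k(\Sigma,[g])$, completing the proof.

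There is no substantive obstacle: the entire argument rests on the observation that the Steklov problem on $\Sigma$ with a discontinuous metric has the same Rayleigh quotient as the Steklov–Neumann problem on the disjoint union of the pieces, but enforces the extra continuity constraint across the interior interfaces; the role of Lemma~\ref{identity} is precisely to guarantee that allowing discontinuous densities does not enlarge the supremum on the Steklov side.
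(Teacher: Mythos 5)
Your proposal is correct and follows essentially the same route as the paper: you glue metrics of the conformal class $[h]$ (equivalently, piecewise-smooth densities) into discontinuous metrics on $\Sigma$, observe that the continuous test functions for the Steklov problem on $(\Sigma,\rho g)$ form a subset of the piecewise test functions for the Steklov--Neumann problem on $(\Omega,\rho h)$, so the min-max gives $\sigma_k(\Sigma,\rho g)\geq\sigma^N_k(\Omega,\partial^S\Omega,\rho h)$ with equal boundary lengths, and then conclude with Lemma~\ref{identity}. The only cosmetic difference is that you take the supremum over densities directly where the paper runs a maximizing sequence $h^m\in[h]$ and passes to the limit (and your comparison is cleanest via the $(k+1)$-dimensional-subspace min-max rather than the mean-zero form of~\eqref{charSN}, which is what you in fact use).
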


\subsection{Steklov-Neumann spectrum of a subdomain.} 
This section is devoted to the following technical lemma 
\begin{lemma}\label{liminf}
 Let $\rho_\delta \in C^\infty_+(\Sigma,\{\Omega,\Sigma\setminus\Omega\})$ such that $\rho_\delta|_{\Omega}\equiv 1$ and $\rho_\delta|_{\Sigma\setminus\Omega}\equiv\delta$. Then one has $$\liminf_{\delta \to 0}\sigma_k(\rho_\delta g) \geq \sigma^{N}_k(\Omega,\partial^S\Omega, g),$$ where $\sigma^{N*}_k(\Omega,\partial^S\Omega, g)$ is the $k$-th Steklov Neumann eigenvalue of the domain $(\Omega,g)$ and $\partial^S\Omega=\partial\Sigma\cap\Omega \neq\O$. 
\end{lemma}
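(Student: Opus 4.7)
The plan is to use the variational characterization~\eqref{charSN} of $\sigma^N_k(\Omega,\partial^S\Omega,g)$ and to construct an admissible $k$-dimensional test space in $\mathcal H^1(\Omega)$ out of Steklov eigenfunctions for $\rho_{\delta}g$ by passing to the limit $\delta\to 0$. Choose a subsequence $\delta_n\to 0$ along which $\sigma_k(\rho_{\delta_n}g)$ realizes the liminf, call this limit $\sigma^\ast$. Let $u_0^n,u_1^n,\dots,u_k^n$ be the first $k+1$ eigenfunctions of the Steklov problem with density $\rho_{\delta_n}$, normalized by $\int_{\partial\Sigma}\rho_{\delta_n}^{1/2}u_i^nu_j^n\,ds_g=\delta_{ij}$. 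Testing the weak eigenvalue equation against $u_i^n$ gives the Dirichlet-orthogonality $\int_\Sigma\langle\nabla u_i^n,\nabla u_j^n\rangle\,dv_g=\sigma_j(\rho_{\delta_n}g)\delta_{ij}$. Since $\rho_{\delta_n}^{1/2}\equiv 1$ on $\partial^S\Omega$ and $\equiv\delta_n^{1/2}$ on $\partial\Sigma\setminus\partial^S\Omega$, we read off $\|u_j^n\|_{L^2(\partial^S\Omega,g)}\leq 1$.

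Next I would establish uniform $H^1(\Sigma,g)$-boundedness of $\{u_j^n\}$. The gradient bound is immediate from $\|\nabla u_j^n\|_{L^2}^2=\sigma_j(\rho_{\delta_n}g)\leq\sigma_k(\rho_{\delta_n}g)$. For the $L^2$ bound, decompose $u_j^n=c_j^n+v_j^n$ where $c_j^n$ is the average of $u_j^n$ over $\Sigma$; Poincar\'e--Wirtinger controls $\|v_j^n\|_{L^2(\Sigma)}$ and the trace inequality controls $\|v_j^n\|_{L^2(\partial^S\Omega)}$, whence $\|u_j^n\|_{L^2(\partial^S\Omega)}\leq 1$ forces $|c_j^n|$ to remain bounded. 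Passing to a further subsequence, $u_j^n\rightharpoonup u_j^\infty$ weakly in $H^1(\Sigma,g)$, and compactness of the trace operator into $L^2(\partial\Sigma,g)$ gives strong boundary convergence. Combining this with the vanishing factor $\delta_n^{1/2}$ on $\partial\Sigma\setminus\partial^S\Omega$ one passes to the limit in the orthonormality relations to obtain
\begin{equation*}
\int_{\partial^S\Omega}u_i^\infty u_j^\infty\,ds_g=\delta_{ij},\qquad \int_{\partial^S\Omega}u_j^\infty\,ds_g=0\quad(j\geq 1),
\end{equation*}
where the mean-zero condition comes from orthogonality to $u_0^n$, itself a constant that converges to a finite positive number.

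With these limits in hand, set $W:=\mathrm{span}(u_1^\infty|_\Omega,\dots,u_k^\infty|_\Omega)$, which is a $k$-dimensional subspace of $\mathcal H^1(\Omega)$ by the $L^2(\partial^S\Omega)$-orthonormality. For any $u=\sum_{j=1}^k a_j u_j^\infty\in W$ with $\sum_j a_j^2=1$ we have $\int_{\partial^S\Omega}u^2\,ds_g=1$, and setting $u^n:=\sum_j a_j u_j^n$, weak lower semicontinuity of the Dirichlet energy together with the Dirichlet-orthogonality yields
\begin{equation*}
\int_\Omega|\nabla u|^2\,dv_g\leq\liminf_n\int_\Sigma|\nabla u^n|^2\,dv_g=\liminf_n\sum_{j=1}^k a_j^2\,\sigma_j(\rho_{\delta_n}g)\leq\sigma^\ast.
\end{equation*}
The min-max principle~\eqref{charSN} now gives $\sigma^N_k(\Omega,\partial^S\Omega,g)\leq\sup_{0\neq u\in W}\frac{\int_\Omega|\nabla u|^2\,dv_g}{\int_{\partial^S\Omega}u^2\,ds_g}\leq\sigma^\ast$, which is the desired inequality.

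The key technical hurdle is the $H^1(\Sigma,g)$-boundedness step and, more precisely, verifying that the weighted boundary mass of the eigenfunctions does not leak to $\partial\Sigma\setminus\partial^S\Omega$ in the limit; this is exactly where the hypothesis $\rho_\delta|_{\Sigma\setminus\Omega}\equiv\delta\to 0$ is essential, since it is the $\delta_n^{1/2}$ factor that absorbs the outer piece of the orthonormality integrals and thereby guarantees that the traces of $u_j^\infty$ on $\partial^S\Omega$ form a genuinely $k$-dimensional admissible test family for the Steklov--Neumann problem on $\Omega$.
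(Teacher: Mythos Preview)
Your argument is correct and genuinely different from the paper's proof. A few remarks are in order.

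\textbf{What you do.} You pass to weak $H^1(\Sigma,g)$ limits of the first $k+1$ eigenfunctions along a subsequence realizing the $\liminf$, use compactness of the trace into $L^2(\partial\Sigma,g)$ to upgrade boundary convergence, and then verify that the limiting functions restricted to $\Omega$ form an admissible $k$-dimensional test space for the variational characterization~\eqref{charSN}. The concern you flag in your last paragraph is not a real gap: once the uniform $H^1(\Sigma,g)$ bound is in place (and your Poincar\'e--Wirtinger argument for $|c_j^n|$ is fine), compact trace forces $\|u_j^n\|_{L^2(\partial\Sigma,g)}$ to stay bounded, so $\delta_n^{1/2}\int_{\partial\Sigma\setminus\partial^S\Omega}u_i^n u_j^n\,ds_g\to 0$ and no mass can leak. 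You should also note that boundedness of $\sigma_k(\rho_{\delta_n}g)$ along the chosen subsequence is automatic: if the $\liminf$ equals $+\infty$ there is nothing to prove.

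\textbf{What the paper does.} The paper never passes eigenfunctions to the limit. Instead, for each fixed $\delta$ it decomposes every test function $\varphi$ on $\partial\Sigma$ as $\varphi_1+\varphi_2$, where $\hat\varphi_1$ solves a mixed problem with Neumann data on $\partial^S\Omega^c$ and $\varphi_2$ vanishes on $\partial^S\Omega$. The cross term vanishes by Green's identity, and the first Steklov--Dirichlet eigenvalue of $\Omega^c$ is used to show that $\|\varphi_2\|^2_{L^2(\partial^S\Omega^c,g_\delta)}\leq C\sqrt\delta\,\|\varphi\|^2_{L^2(\partial\Sigma,g_\delta)}$. After several further estimates one obtains the quantitative inequality
\[
R_\delta[\varphi]\;\geq\;\frac{1}{(1+C\delta^{1/4})(1+C\delta^{1/2})}\,R^N_{(\Omega,\partial^S\Omega,g)}[\varphi|_\Omega],
\]
valid on the span of the first $k+1$ eigenfunctions; the lemma follows by taking $\liminf$.

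\textbf{Comparison.} Your route is shorter and uses only soft compactness, but yields only the limiting inequality. The paper's decomposition is more laborious yet produces an explicit $\delta$-dependent lower bound and identifies the mechanism (the Steklov--Dirichlet gap on the complement) that suppresses the contribution of $\partial^S\Omega^c$.
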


\begin{proof}
The idea of the proof comes from the proof of~\cite[Section 2, Step 2]{enciso2015}.

{\bf Case I.} First we consider the case when $\Omega^c \cap \partial \Sigma \neq \O$. Let $\Omega^c$ denotes $int(\Sigma\setminus\Omega)$ and $\partial^S\Omega^c=\partial \Omega^c \cap \partial \Sigma$. Since by elliptic regularity eigenfunctions of the Steklov problem with bounded density are in $H^1$ on the boundary we can restrict ourselves to the space $H^1(\partial\Sigma,g)$. More precisely, let $\psi$ be an eigenfunction with eigenvalue $\sigma$ then by elliptic regularity:
$$
||\psi||^2_{H^1(\partial\Sigma,\rho_\delta g)} \leq C(||\sigma\psi||^2_{L^2(\partial\Sigma,\rho_\delta g)}+||\psi||^2_{L^2(\partial\Sigma,\rho_\delta g)}) \leq C(\sigma^2+1)||\psi||^2_{L^2(\partial\Sigma,\rho_\delta g)}
$$
for some positive constant $C$. This implies
$$
\frac{||\nabla \psi||^2_{L^2(\partial\Sigma,\rho_\delta g)}}{||\psi||^2_{L^2(\partial\Sigma,\rho_\delta g)}} \leq C(\sigma^2+1)-1.
$$
More generally we see that if $\varphi \in \myspan\langle \psi_0,\ldots,\psi_k \rangle$, where $\psi_i$ is in the $i$-th eigenspace of $(\Sigma,g_\delta)$ then there exists a constant $C_k>0$ such that 
$$
\frac{||\nabla \varphi ||^2_{L^2(\partial\Sigma,\rho_\delta g)}}{|| \varphi ||^2_{L^2(\partial\Sigma,\rho_\delta g)}} \leq C_k.
$$
Therefore, we set
$$
\mathcal H:=\{\varphi \in H^1(\partial\Sigma,g)~|~\frac{||\nabla \varphi ||^2_{L^2(\partial\Sigma,\rho_\delta g)}}{|| \varphi ||^2_{L^2(\partial\Sigma,\rho_\delta g)}} \leq C_k\},
$$
$$
\mathcal{H}_1:=\{\varphi \in \mathcal H ~|~\frac{\partial\hat\varphi}{\partial n}=0~\text{on $\partial^S\Omega^c$}\},
$$
where $\hat \varphi$ stands for the harmonic continuation of $\varphi$ into $\Sigma$ and
$$
\mathcal{H}_2:=\{\varphi \in \mathcal H ~|~\varphi \in H^1_0(\partial^S\Omega^c,g),~\varphi_{|_\Omega}=0\}.
$$
{\bf Claim 1.} One has 
$$
\int_\Sigma\langle\nabla \hat\varphi, \nabla \hat\psi \rangle_{\tilde g} dv_{\tilde g}=0, \forall \varphi \in \mathcal{H}_1, \psi \in \mathcal{H}_2,
$$
for any metric $\widetilde g\in[g]$.
\begin{proof} 
\begin{gather*}
\int_\Sigma\langle\nabla \hat\varphi, \nabla \hat\psi \rangle_{\tilde g} dv_{\tilde g}=\int_\Sigma \Delta_{\tilde g} \hat\varphi \hat\psi dv_{\tilde g}+\int_{\partial \Sigma}\frac{\partial\hat\varphi}{\partial \tilde n}\psi ds_{\tilde g}=\\=\int_{\partial^S\Omega^c}\frac{\partial\hat\varphi}{\partial \tilde n}\psi ds_{\tilde g}+\int_{\partial^S\Omega}\frac{\partial\hat\varphi}{\partial \tilde n}\psi ds_{\tilde g}=0.
\end{gather*}
\end{proof}

 For the sake of simplicity we use the symbols $\sigma^\delta_k$ for $\sigma_k(\rho_\delta g)$, $g_\delta$ for $\rho_\delta g$ and $R_\delta$ for the Rayleigh quotient 
$$
R_\delta[\varphi]=\frac{\displaystyle\int_\Sigma|\nabla \hat\varphi|^2_{g_\delta}dv_{g_\delta}}{\displaystyle\int_{\partial \Sigma} \varphi^2 ds_{g_\delta}}.
$$


{\bf Claim 2.} \label{bound}
There exists a constant that we also denote by $C_k>0$ such that $\sigma^\delta_k \leq C_k$.

\begin{proof}
Theorem \ref{Kor} implies that there exists a constant $C(k)>0$ such that
$$
\sigma^*_k(\Sigma, [g]) \leq C(k).
$$
By Lemma~\ref{identity} for every $\delta$ one has
$$
\sigma^\delta_k L_{g_\delta}(\partial \Sigma) \leq \sigma^*_k(\Sigma, [g]) \leq C(k).
$$
Therefore,
$$
\sigma^\delta_k \leq \frac{C(k)}{L_{g_\delta}(\partial \Sigma)}=\frac{C(k)}{L_{g}(\partial^S\Omega)+ \delta ^{1/2}L_{g}(\partial^S \Omega^c)} \leq \frac{C(k)}{L_{g}(\partial^S \Omega)}=C_k.
$$
\end{proof}

Let $W_k$ be the set of $k+1$-dimensional subspaces of $\mathcal H$ satisfying the condition that ${R_\delta}|_{W_k} \leq C_k$. Claim 2 particularly implies that the space spanned by the first $k+1$ eigenfunctions is in $W_k$, i.e. $W_k\ne\O$.

Consider the operator $\mathcal E$ defined in section \ref{convergence} by 
\begin{gather*}
\begin{cases}
\Delta_{g} \mathcal E(u)=0&\text{in $\Sigma$},\\
\frac{\partial \mathcal E(u)}{\partial n}=0&\text{on $\partial^S\Omega^c$},\\
\mathcal E(u)=u&\text{on $\partial^S\Omega$}.
\end{cases}
\end{gather*}

 For a function $\varphi \in H^1(\partial \Sigma,g)$ we fix its decomposition $\varphi_1+\varphi_2$ with
\begin{align*}
\varphi_1=
 \begin{cases}
 \varphi&\text{on $\partial^S\Omega$},\\
\mathcal E(\varphi)&\text{on $\partial^S\Omega^c$}
 \end{cases}
\end{align*}
and $\varphi_2=\varphi_1-\varphi$. Note that $\hat \varphi_1=\mathcal E(\varphi_1).$

{\bf Claim 3.} 
For every $\varphi \in V \in W_k$ there exists a constant $C>0$ such that
$$
\displaystyle\int_{\partial^S \Omega^c}\varphi^2_2~ds_{g_\delta} \leq C\sqrt{\delta} \displaystyle\int_{\partial \Sigma} \varphi^2 dv_{g_\delta}. 
$$

\begin{proof}
By Claim 1 one has
 $$
 \int_\Sigma\langle\nabla \hat\varphi_1,\nabla \hat\varphi_2 \rangle_{g} dv_{g}=0.
 $$ 
Further, since $\varphi \in V \in W_k$ we have 
\begin{gather*}
C_k \geq R_\delta[\varphi]=\frac{\displaystyle\int_\Sigma|\nabla \hat\varphi|^2_{g}dv_{g}}{\displaystyle\int_{\partial \Sigma} \varphi^2 ds_{g_\delta}}=\frac{\displaystyle\int_\Sigma|\nabla \hat\varphi_1|^2dv_{g}+\displaystyle\int_{\Sigma}|\nabla \hat\varphi_2|^2_{g}dv_{g}}{\displaystyle\int_{\partial \Sigma} \varphi^2 ds_{g_\delta}} \geq \\ \geq \frac{\displaystyle\int_{\Omega^c}|\nabla \hat\varphi_2|^2_{g}dv_{g}}{\displaystyle\int_{\partial \Sigma} \varphi^2 ds_{g_\delta}}=\frac{1}{\delta^{1/2}}\frac{\displaystyle\int_{\Omega^c}|\nabla \hat\varphi_2|^2_{g}dv_{g}}{\displaystyle\int_{\partial^S\Omega^c} \varphi^2_2 ds_{g}}\frac{||\varphi_2||^2_{L^2(\partial^S\Omega^c, g_\delta)}}{||\varphi||^2_{L^2(\partial \Sigma, g_\delta)}} \geq \\ \geq \frac{\sigma^D_1(\Omega^c,\partial^S\Omega^c, g)}{\sqrt{\delta}} \frac{||\varphi_2||^2_{L^2(\partial^S\Omega^c, g_\delta)}}{||\varphi||^2_{L^2(\partial \Sigma, g_\delta)}},
\end{gather*}
where $\sigma^D_1(\Omega^c,\partial^S\Omega^c, g)$ is the first non-zero Steklov-Dirichlet eigenvalue of $(\Omega^c,g)$ (see \cite{banuelos2010eigenvalue}).
\end{proof}

{\bf Claim 4.} 
For every $\varphi \in V \in W_k$ and for every sufficiently small $\delta$ there exists a constant $C>0$ such that
$$
\int_{\partial \Sigma}\varphi^2~ds_{g_\delta} \leq (1+C \delta^{1/4}) \int_{\partial \Sigma} \varphi^2_1 ds_{g_\delta}. 
$$

\begin{proof}
One has
\begin{align*}
||\varphi||^2_{L^2(\partial \Sigma, g_\delta)}=\int_{\partial^S\Omega^c}(\varphi_1+\varphi_2)^2dv_{s_\delta}+\int_{\partial^S\Omega}\varphi^2_1ds_{g_\delta} \leq\\ \leq \Big(1+\frac{1}{\varepsilon}\Big) \int_{\partial \Sigma}\varphi_2^2ds_{g_\delta}+(1+\varepsilon) \int_{\partial \Sigma}\varphi^2_1ds_{g_\delta},
\end{align*}
for every $\varepsilon>0$. Applying Claim 3 we obtain
\begin{align*}
||\varphi||^2_{L^2(\partial \Sigma, g_\delta)} \leq C\sqrt{\delta}\Big(1+\frac{1}{\varepsilon}\Big) \int_{\partial \Sigma}\varphi^2ds_{g_\delta}+(1+\varepsilon) \int_{\partial \Sigma}\varphi^2_1ds_{g_\delta},
\end{align*}
and hence,
\begin{align*}
\Big(1-C\sqrt{\delta} \Big(1+\frac{1}{\varepsilon}\Big)\Big)||\varphi||^2_{L^2(\partial \Sigma, g_\delta)} \leq (1+\varepsilon) ||\varphi_1||^2_{L^2(\partial \Sigma, g_\delta)}.
\end{align*}
Choosing $\varepsilon=\delta^{1/4}$ completes the proof.  
\end{proof}

{\bf Claim 5.} \label{C3}
For every $\varphi \in V \in W_k$ and for every sufficiently small $\delta$ there exists a constant $C>0$ such that
$$
\int_{\partial^S\Omega^c}\varphi^2_1~ds_{g} \leq C\int_{\partial^S\Omega} \varphi^2_1 ds_{g}. 
$$

\begin{proof}
\begin{gather*}
C_k \geq \frac{\displaystyle\int_{\partial\Sigma}|\nabla \varphi|^2_{g_\delta}dv_{g_\delta}}{\displaystyle\int_{\partial \Sigma} \varphi^2 ds_{g_\delta}} \geq \frac{\displaystyle\int_{\partial^S\Omega}|\nabla \varphi|^2_{g}ds_{g}}{\displaystyle\int_{\partial \Sigma} \varphi^2 ds_{g_\delta}}=\frac{\displaystyle\int_{\partial^S\Omega}|\nabla \varphi_1|^2_{g}ds_{g}}{\displaystyle\int_{\partial \Sigma} \varphi^2 ds_{g_\delta}},
\end{gather*}
since $\varphi=\varphi_1$ on $\partial^S\Omega$. Then by Claim 4 one has
\begin{gather*}
C_k \geq\frac{\displaystyle\int_{\partial^S\Omega}|\nabla \varphi_1|^2_{g}ds_{g}}{\displaystyle\int_{\partial \Sigma} \varphi^2 ds_{g_\delta}} \geq \frac{1}{1+C\delta^{1/4}}\frac{\displaystyle\int_{\partial^S\Omega}|\nabla \varphi_1|^2_{g}ds_{g}}{\displaystyle\int_{\partial \Sigma} \varphi_1^2 ds_{g_\delta}},
\end{gather*}
which implies
\begin{equation}\label{thatsit}
\begin{split}
\int_{\partial^S\Omega}|\nabla \varphi_1|^2_{g}ds_{g} \leq C_k(1+C\delta^{1/4})\int_{\partial \Sigma} \varphi_1^2 ds_{g_\delta}=\\=C_k(1+C\delta^{1/4})\Big(\int_{\partial^S\Omega} \varphi_1^2 ds_{g}+\delta^{1/2}\int_{\partial^S\Omega^c} \varphi_1^2 ds_{g}\Big).
\end{split}
\end{equation}
For the rest of the proof $C$ stands for any positive constant depending possibly on $\Sigma$ and $g$ but not on $\delta$ or $\varphi$. 

Note that $\partial^s\Omega$ has positive capacity (see \cite[pp.102-105]{MR3791463}). Applying in order the trace inequality, estimate \eqref{finally}, the Sobolev embedding and inequality \eqref{thatsit} yield
\begin{gather*}
||\varphi_1||^2_{L^2(\partial^S\Omega^c,g)} \leq C||\hat \varphi_1||^2_{H^1(\Sigma,g)} \leq C||\varphi_1||^2_{H^{1/2}(\partial^S\Omega,g)} \leq \\ \leq C||\varphi_1||^2_{H^{1}(\partial^S\Omega,g)}=C(||\varphi_1||^2_{L^{2}(\partial^S\Omega,g)}+||\nabla \varphi_1||^2_{L^{2}(\partial^S\Omega,g)}) \leq \\ \leq C(1+C\delta^{1/4})\Big(||\varphi_1||^2_{L^{2}(\partial^S\Omega,g)}+\delta^{1/2}||\varphi_1||^2_{L^{2}(\partial^S\Omega^c,g)}\Big),
\end{gather*}
which implies the required inequality for $\delta$ small enough.
\end{proof}

Further by  the fact that $\int_\Sigma\langle\nabla \hat\varphi_1, \nabla \hat\varphi_2 \rangle_{g} dv_{g}=0$ and by claim 4 for every $\varphi \in V \in W_k$ and one has
\begin{gather*}
R_\delta[\varphi]=\frac{\displaystyle\int_\Sigma|\nabla \hat\varphi|^2_{g}dv_{g}}{\displaystyle\int_{\partial \Sigma} \varphi^2 ds_{g_\delta}}=\frac{\displaystyle\int_\Sigma|\nabla \hat\varphi_1|^2_{g}dv_{g}+\displaystyle\int_{\Sigma}|\nabla \hat\varphi_2|^2_{g}dv_{g}}{\displaystyle\int_{\partial \Sigma} \varphi^2 ds_{g_\delta}} \geq \\ \geq \frac{1}{1+C\delta^{1/4}} \frac{\displaystyle\int_\Sigma|\nabla \hat\varphi_1|^2_{g}dv_{g}+\int_{\Sigma}|\nabla \hat\varphi_2|^2_{g}dv_{g}}{\displaystyle\int_{\partial \Sigma} \varphi_1^2 ds_{g_\delta}} \geq \\ \geq \frac{1}{1+C\delta^{1/4}} \frac{\displaystyle\int_\Sigma|\nabla \hat\varphi_1|^2_{g}dv_{g}}{\displaystyle\int_{\partial \Sigma} \varphi_1^2 ds_{g_\delta}}=\frac{1}{1+C\delta^{1/4}} \frac{\displaystyle\int_\Sigma|\nabla \hat\varphi_1|^2_{g}dv_{g}}{\displaystyle\int_{\partial^S\Omega} \varphi_1^2 dv_{g}+\delta^{1/2}\int_{\partial^S\Omega^c}\varphi_1^2 dv_{g}} 
\end{gather*}
and by claim 5 we get
\begin{gather*}
R_\delta[\varphi] \geq\frac{1}{(1+C\delta^{1/4})(1+\delta^{1/2}C)} \frac{\displaystyle\int_\Sigma|\nabla \hat\varphi_1|^2_{g}dv_{g}}{\displaystyle\int_{\partial^S\Omega} \varphi_1^2 ds_{g}} \geq\\ \geq \frac{1}{(1+C\delta^{1/4})(1+\delta^{1/2}C)} \frac{\displaystyle\int_\Omega|\nabla \hat\varphi_1|^2_{g}dv_{g}}{\displaystyle\int_{\partial^S\Omega} \varphi_1^2 ds_{g}} \geq \\ \geq\frac{1}{(1+C\delta^{1/4})(1+\delta^{1/2}C)} R^N_{(\Omega,\partial^S\Omega, g)}[\varphi_{|_\Omega}].
\end{gather*}
where $R^N_{(\Omega,\partial^S\Omega, g)}$ denotes the Rayleigh quotient for the Steklov-Neumann problem in the domain $(\Omega,g)$. 

Let $V=\myspan\langle \psi_0,\ldots,\psi_k \rangle$, where $\psi_i$ is in the $i$-th eigenspace of $(\Sigma,g_\delta)$. Then
\begin{equation}\label{before}
\begin{split}
\sigma^\delta_k=\max_{\varphi \in V}R_\delta[\varphi] \geq \frac{1}{(1+C\delta^{1/4})(1+\delta^{1/2}C)} \max_{\varphi \in V}R^N_{(\Omega,\partial^S\Omega, g)}[\varphi_{|_\Omega}] \geq \\ \geq \frac{1}{(1+C\delta^{1/4})(1+\delta^{1/2}C)}\sigma^N_k(\Omega,\partial^S\Omega, g),
\end{split}
\end{equation}
since the restriction to $\Omega$ of the functions $\psi_i$ form the space of the same dimension by unique continuation. Finally, passing to the $\liminf$ as $\delta \to 0$ in~\eqref{before} yields the lemma.

{\bf Case II.} The case when $\Omega^c \cap \partial \Sigma=\O$ is trivial. Indeed, in this case we have $\partial^S\Omega=\partial \Sigma$. Then for any function $\varphi$ one has
\begin{gather*}
R_\delta[\varphi]=\frac{\displaystyle\int_\Sigma|\nabla \hat\varphi|^2_{g}dv_{g}}{\displaystyle\int_{\partial \Sigma} \varphi^2 ds_{g_\delta}} \geq \frac{\displaystyle\int_\Omega|\nabla \hat\varphi|^2_{g}dv_{g}}{\displaystyle\int_{\partial^S\Omega} \varphi^2 ds_{g}}=R^N_{(\Omega,\partial^S\Omega, g)}[\varphi_{|_\Omega}].
\end{gather*}
Therefore, considering $V=\myspan\langle \psi_0,\ldots,\psi_k \rangle$, where $\psi_i$ is in the $i$-th eigenspace of $(\Sigma,g_\delta)$ yields 
\begin{gather*}
\sigma^\delta_k=\max_{\varphi \in V}R_\delta[\varphi] \geq  \max_{\varphi \in V}R^N_{(\Omega,\partial^S\Omega, g)}[\varphi_{|_\Omega}] \geq \sigma^N_k(\Omega,\partial^S\Omega, g).
\end{gather*}
Taking $\liminf$ as $\delta \to 0$ completes the proof.
\end{proof}

Lemma \ref{liminf} is the key ingredient in the proof of the following proposition. We postpone the proof to Section \ref{appendix2}.

\begin{proposition}
\label{subdomain}
Let $(\Sigma,g)$ be a Riemannian surface with boundary, $\Omega\subset \Sigma$ a Lipschitz domain and $\partial^S\Omega=\partial\Sigma\cap\Omega\neq\O$. Then for all $k$ one has
$$
\sigma^*_k(\Sigma,[g]) \geq \sigma^{N*}_k(\Omega,\partial^S\Omega, [g|_{\overline\Omega}]).
$$
Similarly, let $(\Sigma,g)$ be a Riemannian surface whose boundary. Let $\partial^S\Sigma$ denote all boundary components with the Steklov boundary condition and $\Omega\subset \Sigma$ be a Lipschitz domain such that $\partial^S\Omega \subset \partial^S\Sigma$. Then for all $k$ one has
$$
\sigma^{N*}_k(\Sigma,\partial^S\Sigma, [g]) \geq \sigma^{N*}_k(\Omega,\partial^S\Omega, [g|_{\overline\Omega}]).
$$
\end{proposition}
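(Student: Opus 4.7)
The strategy is to promote a near-optimizer of $\sigma_k^{N*}$ on $\Omega$ to an \emph{almost}-conformal representative on all of $\Sigma$ by inserting a vanishing conformal factor on $\Sigma\setminus\Omega$. The discontinuous metric obtained in this way is legal thanks to Lemma~\ref{identity}, while Lemma~\ref{liminf} guarantees that its Steklov spectrum converges from above to the Steklov--Neumann spectrum on $\Omega$. The bounds on boundary length and on the first Steklov eigenvalue then combine to give the desired inequality after taking a supremum.

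More precisely, fix $\varepsilon>0$ and choose $f\in C^\infty_+(\overline\Omega)$ with $\bar\sigma_k^N(\Omega,\partial^S\Omega, fg|_{\overline\Omega})\ge \sigma^{N*}_k(\Omega,\partial^S\Omega,[g|_{\overline\Omega}])-\varepsilon$. Extend $f$ to a smooth positive function $\tilde f\in C^\infty_+(\Sigma)$ (this uses only the Lipschitz regularity of $\partial\Omega$). For $\delta\in(0,1)$ define $\rho_\delta\in C^\infty_+(\Sigma,\{\Omega,\Sigma\setminus\Omega\})$ by $\rho_\delta|_\Omega\equiv 1$ and $\rho_\delta|_{\Sigma\setminus\Omega}\equiv\delta$, and consider the discontinuous metric $\tilde g_\delta := \rho_\delta\,\tilde f g$. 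Applying Lemma~\ref{liminf} with background smooth metric $\tilde f g$ yields
\[
\liminf_{\delta\to 0}\sigma_k(\tilde g_\delta)\ \ge\ \sigma_k^N(\Omega,\partial^S\Omega, \tilde f g|_{\overline\Omega})\ =\ \sigma_k^N(\Omega,\partial^S\Omega, f g|_{\overline\Omega}).
\]
On the other hand $L_{\tilde g_\delta}(\partial\Sigma)=\int_{\partial^S\Omega}\tilde f^{1/2}ds_g+\delta^{1/2}\int_{\partial\Sigma\setminus\partial^S\Omega}\tilde f^{1/2}ds_g\to L_{fg}(\partial^S\Omega)$ as $\delta\to 0$. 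Lemma~\ref{identity} gives $\bar\sigma_k(\tilde g_\delta)\le \sigma^*_k(\Sigma,[\tilde f g])=\sigma^*_k(\Sigma,[g])$ for every $\delta$, so passing to the $\liminf$ produces
\[
\sigma^*_k(\Sigma,[g])\ \ge\ \bar\sigma_k^N(\Omega,\partial^S\Omega,fg|_{\overline\Omega})\ \ge\ \sigma^{N*}_k(\Omega,\partial^S\Omega,[g|_{\overline\Omega}])-\varepsilon,
\]
and letting $\varepsilon\to 0$ yields the first inequality.

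The second assertion follows by the same argument applied with the Steklov condition replaced by the mixed Steklov--Neumann condition. The Neumann part $\partial\Sigma\setminus\partial^S\Sigma$ is preserved throughout the construction; only the Steklov portions of the boundary contribute to the normalizing length, and the cut-off of the conformal factor still takes place on $\Sigma\setminus\Omega$. Thus the analogue of Lemma~\ref{identity} for Steklov--Neumann problems (which comes from the variational characterization~\eqref{charSN} together with the conformal perturbation argument used in the proof of Lemma~\ref{identity}) together with the Steklov--Neumann analogue of Lemma~\ref{liminf} yields the required bound.

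The step I expect to require the most care is the invocation of Lemma~\ref{liminf} with background metric $\tilde f g$ rather than $g$: one must verify that its proof really uses only the smoothness of the background metric and the elliptic estimates it provides, so that replacing $g$ by the conformally related smooth metric $\tilde f g$ is harmless. Beyond this, the second assertion will require transferring Lemmas~\ref{identity}, \ref{identity2}, and~\ref{liminf} from the pure Steklov setting to the mixed Steklov--Neumann setting; this is a routine adaptation since the Neumann boundary condition only adds a trivial (zero) term in the Rayleigh quotient manipulations, but it should be spelled out to be safe.
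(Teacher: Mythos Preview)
Your proof is correct and follows essentially the same route as the paper: pick a near-maximizing conformal factor on $\Omega$, extend it smoothly to $\Sigma$, degenerate it on $\Sigma\setminus\Omega$ via $\rho_\delta$, then combine Lemma~\ref{liminf} (applied with the smooth background $\tilde f g$) with Lemma~\ref{identity} and pass to the limit. The paper phrases this with a maximizing sequence $h_i=f_ig|_\Omega$ rather than a single $\varepsilon$-approximant, and disposes of the second assertion with the same remark you make---that the Steklov--Neumann analogues of Lemmas~\ref{identity} and~\ref{liminf} follow by identical arguments.
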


As a corollary of Proposition~\ref{subdomain} we get
\begin{corollary}\label{Neumann cor2}
Let $(M, g)$ be a compact Riemannian surface with boundary. Consider a sequence $\{ K_n \}$ of smooth domains $K_n \subset M$ such that
\begin{itemize}
\item $K_r \subset K_s$ $\forall r>s$;
\item $\cap_n K_n=\{p_1,\ldots,p_l\}$ for some points $p_1,\ldots,p_l \in M$.
\end{itemize} 
Then one has
$$
\lim_{n \to \infty}\sigma^{N*}_k(M \setminus K_n, \partial M \setminus \partial K_n, [g])= \sigma^*_k(M, [g]).
$$ 
\end{corollary}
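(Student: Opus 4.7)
The plan is to sandwich $\sigma^{N*}_k(M\setminus K_n,\partial M\setminus\partial K_n,[g])$ between quantities both of which converge to $\sigma^*_k(M,[g])$, using Proposition~\ref{subdomain} for monotonicity and Lemma~\ref{Neumann conv} for the pointwise-in-the-metric spectral convergence as small balls around the $p_i$ are filled in.

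For the upper bound, I apply the first inequality of Proposition~\ref{subdomain} with $\Sigma=M$ and $\Omega=M\setminus K_n$, whose Steklov boundary is $\partial M\cap(M\setminus K_n)=\partial M\setminus\partial K_n$. This yields $\sigma^*_k(M,[g])\geq\sigma^{N*}_k(M\setminus K_n,\partial M\setminus\partial K_n,[g])$ for every $n$, so $\limsup_n\sigma^{N*}_k(M\setminus K_n,\partial M\setminus\partial K_n,[g])\leq\sigma^*_k(M,[g])$.

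For the reverse inequality I first pass to geodesic balls. Since the $K_n$ are decreasing with $\bigcap_n K_n=\{p_1,\ldots,p_l\}$, for every $\varepsilon>0$ one has $K_n\subset\bigcup_i B_\varepsilon(p_i)$ for all sufficiently large $n$. The inclusion $M\setminus\bigcup_i B_\varepsilon(p_i)\subset M\setminus K_n$, together with the corresponding inclusion of Steklov boundaries $\partial M\setminus\bigcup_i\partial B_\varepsilon(p_i)\subset\partial M\setminus\partial K_n$, permits invoking the second inequality of Proposition~\ref{subdomain} to get
\begin{equation*}
\sigma^{N*}_k(M\setminus K_n,\partial M\setminus\partial K_n,[g])\geq\sigma^{N*}_k\Bigl(M\setminus\bigcup\nolimits_i B_\varepsilon(p_i),\,\partial M\setminus\bigcup\nolimits_i\partial B_\varepsilon(p_i),\,[g]\Bigr).
\end{equation*}
Now fix $\eta>0$ and pick $\tilde g=fg\in[g]$ with $\overline{\sigma}_k(M,\tilde g)\geq\sigma^*_k(M,[g])-\eta$. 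Lemma~\ref{Neumann conv} applied to $(M,\tilde g)$ gives $\sigma^N_k(M\setminus\bigcup_i B_\varepsilon(p_i),\ldots,\tilde g)\to\sigma_k(M,\tilde g)$ as $\varepsilon\to 0$, while $L_{\tilde g}(\partial M\setminus\bigcup_i\partial B_\varepsilon(p_i))\to L_{\tilde g}(\partial M)$ since the discarded arcs have total length $O(\varepsilon)$. Hence the normalized Steklov--Neumann eigenvalue $\overline{\sigma}^N_k$ in the metric $\tilde g$ converges to $\overline{\sigma}_k(M,\tilde g)\geq\sigma^*_k(M,[g])-\eta$, and since $\sigma^{N*}_k$ dominates this value by definition, one obtains $\sigma^{N*}_k(M\setminus\bigcup_i B_\varepsilon(p_i),\ldots,[g])\geq\sigma^*_k(M,[g])-2\eta$ for $\varepsilon$ small enough. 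Plugging this into the displayed inequality and then letting $\eta\to 0$ gives $\liminf_n\sigma^{N*}_k(M\setminus K_n,\partial M\setminus\partial K_n,[g])\geq\sigma^*_k(M,[g])$, which combined with the upper bound finishes the proof.

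The only subtlety is that Lemma~\ref{Neumann conv} is pointwise in the metric whereas $\sigma^{N*}_k$ is a supremum over an entire conformal class. This is bridged in the standard way: the supremum is bounded from below by its value at a nearly maximizing $\tilde g$, the spectral convergence is applied pointwise at $\tilde g$, and the $\eta$-deficit is sent to zero only after the spectral limit has been taken.
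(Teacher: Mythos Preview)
Your proof is correct and follows essentially the same route as the paper: Proposition~\ref{subdomain} provides both the upper bound and the monotonicity step that reduces $M\setminus K_n$ to $M\setminus\bigcup_i B_\varepsilon(p_i)$, and Lemma~\ref{Neumann conv} applied at a nearly-maximizing $\tilde g\in[g]$ yields the lower bound. The only cosmetic difference is the order of quantifiers (the paper fixes the maximizing metric first and chooses a sequence $\varepsilon_n\to 0$ with $K_n\subset\bigcup_i B_{\varepsilon_n}(p_i)$, whereas you fix $\varepsilon$ and take $n$ large), but the argument is the same in substance.
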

The proof is postponed to Section \ref{appendix2}.

\subsection{Disconnected surfaces.}
The proofs of two lemmas below follow the exactly same arguments as the proofs of Lemma 4.9 and Lemma 4.10 in \cite{karpukhin2019friedlander}. Their proofs are postponed to Section \ref{appendix2}.
\begin{lemma}
\label{disconnected}
Let $(\Omega,g) = \sqcup_{i=1}^s(\Omega_i,g_i)$ be a disjoint union of Riemannian surfaces with Lipschitz boundary. Set $\partial^S\Omega=\sqcup_{i=1}^s\partial^S\Omega_i$. Then for all $k>0$ one has
$$
\sigma^{N*}_k(\Omega,\partial^S\Omega, [g]) = \max_{\sum\limits_{i=1}^s k_i=k,\,\,\,k_i>0}\,\,\sum_{i=1}^s\sigma^{N*}_{k_i}(\Omega_i,\partial^S\Omega_i, [g_i]).
$$
\end{lemma}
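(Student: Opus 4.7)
My plan is to prove the two inequalities separately, exploiting that the Steklov--Neumann spectrum of the disjoint union $(\Omega,g)$ is the sorted multiset union of the componentwise spectra.

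For the inequality $\geq$, I fix a partition $k = k_1 + \cdots + k_s$ with each $k_i > 0$ and $\varepsilon > 0$. On each component I select $g_i \in [g_i]$ with $\bar\sigma^N_{k_i}(\Omega_i,\partial^S\Omega_i,g_i) > \sigma^{N*}_{k_i}(\Omega_i,\partial^S\Omega_i,[g_i]) - \varepsilon/s$. Because $\bar\sigma^N_k$ is scale invariant while $\sigma^N_k$ scales as $c^{-1}$ under $g \mapsto c^2 g$, I rescale each $g_i$ to arrange $\sigma^N_{k_i}(\Omega_i,g_i) = \lambda$ for a common value $\lambda > 0$. On $(\Omega, g = \sqcup_i g_i)$ a direct count of eigenvalues strictly below $\lambda$ yields $s + \sum_i(k_i-1) = k$ (the $s$ zero modes of the disjoint union together with $k_i - 1$ positive eigenvalues strictly below $\lambda$ from each $\Omega_i$), so $\sigma^N_k(\Omega,g) \geq \lambda$. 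Multiplying by $L_g(\partial^S\Omega) = \sum_i L_{g_i}(\partial^S\Omega_i)$ gives
\[
\bar\sigma^N_k(\Omega, \partial^S\Omega, g) \geq \lambda\sum_i L_{g_i}(\partial^S\Omega_i) = \sum_i \bar\sigma^N_{k_i}(\Omega_i, \partial^S\Omega_i, g_i) > \sum_i \sigma^{N*}_{k_i}(\Omega_i, \partial^S\Omega_i, [g_i]) - \varepsilon,
\]
and sending $\varepsilon \to 0$ and maximizing over partitions completes this direction.

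For the inequality $\leq$, I fix any $h \in [g]$, set $\nu = \sigma^N_k(\Omega,\partial^S\Omega,h)$ and $h_i = h|_{\Omega_i}$, and define $k_i$ to be the smallest positive integer with $\sigma^N_{k_i}(\Omega_i,\partial^S\Omega_i,h_i) \geq \nu$; such an index exists because each componentwise spectrum diverges. In each component the eigenvalues strictly less than $\nu$ are $\sigma^N_0 = 0, \sigma^N_1(\Omega_i,h_i), \ldots, \sigma^N_{k_i-1}(\Omega_i,h_i)$, a total of $k_i$, and summing these counts across the disjoint union and comparing with the defining property of $\nu$ forces $\sum_i k_i = k$. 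Since $\sigma^N_{k_i}(\Omega_i,h_i) \geq \nu$ by construction,
\[
\bar\sigma^N_k(\Omega, h) = \nu \sum_i L_{h_i}(\partial^S\Omega_i) \leq \sum_i \sigma^N_{k_i}(\Omega_i, h_i) L_{h_i}(\partial^S\Omega_i) = \sum_i \bar\sigma^N_{k_i}(\Omega_i, h_i) \leq \sum_i \sigma^{N*}_{k_i}(\Omega_i, [g_i]).
\]
Taking the maximum over the finitely many admissible partitions and the supremum over $h$ closes the bound.

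The main technical issue I expect to navigate is the possibility of eigenvalue coincidences at the threshold $\lambda$ or $\nu$: if two componentwise spectra meet there, or if $\sigma^N_{k-1}(\Omega,h) = \nu$, then the counting argument can yield $\sum_i k_i \neq k$ and the defining property of $k_i$ becomes ambiguous. I would handle this with a small generic perturbation of each metric within its conformal class, which separates the spectra into simple, pairwise disjoint values where the counting is clean, and then extend the identity to all metrics using the $C^\infty$-continuity of $\sigma^{N*}_k$ established in Proposition~\ref{N-cont}. A minor remark is that the formula is only nonvacuous for $k \geq s$, since $\sum k_i = k$ with $k_i > 0$ is unrealizable when $k < s$; for such $k$ both sides of the identity vanish.
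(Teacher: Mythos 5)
Your proposal is sound and rests on the same core fact as the paper's proof (the Steklov--Neumann spectrum of a disjoint union is the union of the componentwise spectra), but it is organized differently in the upper bound. For $\geq$ you do essentially what the paper does: take near-maximizers on the components, rescale so the $k_i$-th eigenvalues agree, glue, and multiply by the total boundary length; the paper normalizes to the (unknown) value $\sigma^{N*}_k(\Omega,\partial^S\Omega,[g])$ and uses the definition of the supremum to bound the total length by $1$, whereas your common-$\lambda$ normalization plus the lower bound $\sigma^N_k(\Omega,g)\geq\lambda$ is a touch more direct. For $\leq$ the paper argues by contradiction along a maximizing sequence, with the indices $d_i$ defined through $\limsup$ conditions; your argument is a direct per-metric counting bound followed by a supremum over $h\in[g]$, which avoids that bookkeeping and is, in my view, cleaner. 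What each approach buys: yours gives an explicit partition for every metric and no contradiction scheme; the paper's formulation is tailored to the way maximizing sequences reappear elsewhere in Section 5.

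The one place your write-up needs tightening is the threshold-coincidence case, and your proposed repair is both heavier than necessary and not actually justified: generic simplicity (and mutual disjointness) of Steklov--Neumann spectra under conformal perturbations is a nontrivial statement you do not prove, and Proposition~\ref{N-cont} concerns the conformal suprema $\sigma^{N*}_k$, not the continuity of an individual eigenvalue of a fixed metric, which is what that limiting argument would really require. Fortunately no perturbation is needed at all. In the $\geq$ direction you only need that the number of eigenvalues of the union strictly below $\lambda$ is at most $\sum_i k_i=k$ (it comes only from positions $0,\dots,k_i-1$ in each component), which already gives $\sigma^N_k(\Omega,g)\geq\lambda$ whether or not some $\sigma^N_j(\Omega_i,g_i)$ with $j<k_i$ equals $\lambda$. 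In the $\leq$ direction your counting gives $\sum_i k_i\leq k$ in general (equality exactly when $\sigma^N_{k-1}(\Omega,h)<\nu$); when the sum falls short, simply enlarge some of the $k_i$ to reach total $k$ and use that $j\mapsto\sigma^{N*}_j(\Omega_i,\partial^S\Omega_i,[g_i])$ is nondecreasing, so the bound $\bar\sigma^N_k(\Omega,h)\leq\sum_i\sigma^{N*}_{k_i}(\Omega_i,\partial^S\Omega_i,[g_i])$ only improves toward the maximum over admissible partitions. With that one-line fix in place of the genericity argument, your proof is complete.
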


\begin{lemma}\label{omega_i}
Let $(\Sigma,g)$ be a Riemannian surface with boundary. Consider a set of pairwise disjoint Lipschitz domains $\{\Omega_i\}^s_{i=1}$ in $\Sigma$ such that $\Sigma=\bigcup^s_{i=1} \overline\Omega_i$ and $\Omega_i\cap \partial\Sigma=\partial^S\Omega_i \neq \O$ for $1 \leq i \leq s'$. Then one has
$$
\sigma^{*}_k(\Sigma, [g]) \geq \max_{\sum_{i=1}^{s'} k_i=k,\,\,\,k_i \geq 0} \sum^{s'}_{i=1} \sigma^{N*}_{k_i}(\Omega_i,\partial^S\Omega_i, [g]).
$$
\end{lemma}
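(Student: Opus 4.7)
The plan is to combine three tools from the preceding development: Lemma \ref{identity2} to pass from $\Sigma$ to a disjoint union of its pieces, the second part of Proposition \ref{subdomain} to discard the pieces that do not contribute to a given partition, and Lemma \ref{disconnected} to split the Steklov--Neumann eigenvalue on the disjoint union into a sum over components.

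Fix any admissible partition $k_1+\cdots+k_{s'}=k$ with $k_i\geq 0$ and set $I=\{i\leq s':k_i>0\}$. First, form $\Omega=\sqcup_{i=1}^s \overline{\Omega_i}$ with $\partial^S\Omega=\sqcup_{i=1}^{s'}\partial^S\Omega_i$ (the pieces with $i>s'$ contribute nothing to the Steklov boundary since $\Omega_i\cap\partial\Sigma=\varnothing$). Lemma \ref{identity2} gives
$$
\sigma^*_k(\Sigma,[g])\geq \sigma^{N*}_k(\Omega,\partial^S\Omega,[g|_\Omega]).
$$
Next, apply the second part of Proposition \ref{subdomain} to the sub-union $\Omega_I:=\sqcup_{i\in I}\overline{\Omega_i}$, whose Steklov boundary $\partial^S\Omega_I=\sqcup_{i\in I}\partial^S\Omega_i$ is contained in $\partial^S\Omega$, to obtain
$$
\sigma^{N*}_k(\Omega,\partial^S\Omega,[g|_\Omega])\geq \sigma^{N*}_k(\Omega_I,\partial^S\Omega_I,[g|_{\Omega_I}]).
$$
Finally, since every component of $\Omega_I$ has non-empty Steklov boundary and the partition $\{k_i\}_{i\in I}$ satisfies $k_i>0$ with $\sum_{i\in I}k_i=k$, Lemma \ref{disconnected} yields
$$
\sigma^{N*}_k(\Omega_I,\partial^S\Omega_I,[g|_{\Omega_I}])\geq \sum_{i\in I}\sigma^{N*}_{k_i}(\Omega_i,\partial^S\Omega_i,[g|_{\Omega_i}])=\sum_{i=1}^{s'}\sigma^{N*}_{k_i}(\Omega_i,\partial^S\Omega_i,[g|_{\Omega_i}]),
$$
the last equality because $\sigma^{N*}_0=0$. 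Taking the maximum over all admissible partitions gives the claim.

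The main subtlety I expect is that Lemma \ref{disconnected} is stated only for partitions with all $k_i>0$, whereas the present lemma allows $k_i=0$. The intermediate restriction to $\Omega_I$ via Proposition \ref{subdomain} is precisely what lets us drop the vanishing terms before invoking the disjoint-union splitting. Modulo this bookkeeping, the argument is a direct concatenation of the three preceding results.
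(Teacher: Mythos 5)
Your argument is correct and uses exactly the same three ingredients as the paper's proof --- Proposition \ref{subdomain}, Lemma \ref{identity2} and Lemma \ref{disconnected} --- only in the opposite order: the paper first applies Proposition \ref{subdomain} inside $\Sigma$ to the subdomain $\cup_{i\in I}\overline\Omega_i$ (thereby discarding the components with $k_i=0$ and those not meeting $\partial\Sigma$ while the ambient space is still the connected surface $\Sigma$), and only afterwards disjointifies via (the Steklov--Neumann version of) Lemma \ref{identity2} before invoking Lemma \ref{disconnected}. Your ordering --- disjointify everything first, then discard --- yields the same chain of inequalities, and your remark on why the restriction to $I$ must precede Lemma \ref{disconnected} (its maximum runs over strictly positive partitions) is exactly the role played by $\sigma^{N*}_0=0$ in the paper. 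The one caveat with your ordering is that the intermediate objects stray slightly from the stated hypotheses: the disjoint union $\sqcup_{i=1}^s\overline\Omega_i$ contains components with $\partial^S\Omega_i=\emptyset$, for which the Steklov--Neumann problem is degenerate (a constant on such a component solves the system for every $\sigma$), and the second part of Proposition \ref{subdomain} is then invoked with a disconnected ambient surface, whereas surfaces in the paper are assumed connected with non-empty Steklov part. This is harmless --- components carrying no Steklov boundary affect neither the spectrum nor $L(\partial^S\Omega)$, so you can simply drop them before (or while) applying Lemma \ref{identity2} --- but the paper's ordering sidesteps the issue entirely, which is presumably why it restricts to $\cup_{i\in I}\overline\Omega_i$ first.
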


\section{Proof of Theorem~\ref{non-bound}.} \label{appendix4}
The proof is inspired by the methods of the papers~\cite{MR577325,girouard2012upper, MR3579963}. Let $\Sigma$ be a non-orientable compact surface of genus $\gamma$ and $l$ boundary components. We pass to its orientable cover $\pi\colon\widetilde\Sigma \to \Sigma$. Note that $\Sigma$ is of genus $\gamma$ and has $2l$ boundary components. Let $\tau$ denote the involution exchanging the sheets of $\pi$. If $h$ is a metric on $\Sigma$ then $g:=\pi^*h$ is a metric on $\widetilde\Sigma$ invariant with respect to $\tau$, i.e. $\tau$ is an isometry of $g$. Let $\mathcal D_{\widetilde\Sigma}$ be the Dirichlet-to-Neumann map acting on functions on $\widetilde\Sigma$. Then $\tau\circ\mathcal D_{\widetilde\Sigma}=\mathcal D_{\widetilde\Sigma}\circ\tau$ and hence Steklov eigenfunctions are divided into $\tau-$odd and $\tau-$even ones. The corresponding Steklov eigenvalues are also divided into odd and even ones. Let $\sigma^\tau_k(\widetilde\Sigma,g)$ the $k-$th $\tau-$even Steklov eigenvalue. Then $\sigma^\tau_k(\widetilde\Sigma,g)=\sigma_k(\Sigma,h)$. 

By a well-known theorem of Ahlfors~\cite{ahlfors1950open} there exists a proper conformal branched cover $\psi\colon(\widetilde\Sigma,g) \to (\mathbb D^2,g_{can})$. The word "proper" means $\psi(\partial\widetilde\Sigma)=\mathbb S^1$. Let $d$ be its degree. Define the following pushed-forward metric $g^*$ on $\mathbb D^2$: consider a neighbourhood $U$ of a non-branching point $p\in\mathbb D^2$. Its pre-image is a collection of $d$ neighbourhoods $U_i, i=1,\ldots,d$ on $\widetilde\Sigma$. Moreover, $\psi_i:=\psi_{|_{U_i}}\colon U_i\to U$ is a diffeomorphism. Then the metric $g^*$ is defined on $U$ as $\sum(\psi^{-1}_i)^*g$. The metric $g^*$ is a metric on $\mathbb D^2$ with isolated conical singularities at branching points of $\psi$. The following lemma is trivial

\begin{lemma}\label{Yau}
For any function $u\in C^\infty(\mathbb D^2)$ one has 
$$
\int_{\mathbb S^1}udv_{g^*}=\int_{\partial\widetilde\Sigma}(\psi^*u)dv_{g}
$$
and
$$
d\int_{\mathbb D^2}|\nabla_{g^*} u|^2dv_{g^*}=\int_{\widetilde\Sigma}|\nabla_{g} (\psi^*u)|^2dv_{g}.
$$
\end{lemma}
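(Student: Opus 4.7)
The plan is to reduce both identities to two standard facts: the change-of-variable formula for the branched cover $\psi$ away from its branch locus, and the conformal invariance of the Dirichlet energy in dimension two. Since the branch locus is a finite set of points in $\widetilde\Sigma$ (with finitely many images in $\mathbb D^2$), it has measure zero for every integral in question, so I will first work on its complement and then close up by a routine exhaustion at the end.

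For the boundary identity, fix a non-branching point $p\in\mathbb S^1$ and a small neighborhood $U\ni p$ in $\mathbb D^2$ so that $\psi^{-1}(U)=\bigsqcup_{i=1}^d V_i$, with each $\psi_i=\psi|_{V_i}\colon V_i\to U$ a diffeomorphism sending $V_i\cap\partial\widetilde\Sigma$ onto $U\cap\mathbb S^1$. From the definition $g^*|_U=\sum_i(\psi_i^{-1})^*g$ and the change of variable on each sheet, I expect the local identity
\[
\int_{U\cap\mathbb S^1} u\,dv_{g^*}=\sum_{i=1}^d \int_{V_i\cap\partial\widetilde\Sigma}(\psi^* u)\,dv_g,
\]
after which a partition of unity on $\mathbb S^1$ (minus branch values) assembles this into the claimed global formula.

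For the gradient identity, the plan is to chain
\begin{align*}
\int_{\widetilde\Sigma}|\nabla_g(\psi^* u)|^2 dv_g
&= \int_{\widetilde\Sigma}|\nabla_{\psi^*g_{can}}(\psi^* u)|^2 dv_{\psi^*g_{can}} \\
&= d\int_{\mathbb D^2}|\nabla_{g_{can}} u|^2 dv_{g_{can}} \\
&= d\int_{\mathbb D^2}|\nabla_{g^*} u|^2 dv_{g^*}.
\end{align*}
The first equality is the 2D conformal invariance of the Dirichlet energy applied to the conformal pair $g$ and $\psi^* g_{can}$ on $\widetilde\Sigma$ (they are conformal because $\psi$ is a conformal map). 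The middle equality is the change of variable for the degree-$d$ branched cover $\psi$, which produces the factor of $d$. The last equality uses 2D conformal invariance once more on $\mathbb D^2$, together with the fact that $g^*$ is locally a sum of metrics each conformal to $g_{can}$, and so lies in the conformal class $[g_{can}]$ away from the branch points.

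The only real obstacle I anticipate is the branch locus of $\psi$, where $g^*$ has conical singularities. I would handle this by a standard exhaustion: excise $\varepsilon$-balls around the finitely many branch points in both $\widetilde\Sigma$ and $\mathbb D^2$, run the arguments above on the complement, and let $\varepsilon\to 0$; since $u\in C^\infty(\mathbb D^2)$ and the conical singularities of $g^*$ are integrable for both length and area, the contributions from the excised neighborhoods to the boundary length and Dirichlet energy integrals vanish in the limit.
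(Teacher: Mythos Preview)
Your chain for the Dirichlet identity is correct: the Dirichlet integral in dimension two is conformally invariant, so replacing $g$ by $\psi^*g_{can}$ on $\widetilde\Sigma$ and $g^*$ by $g_{can}$ on $\mathbb D^2$ is legitimate, and the middle step is then exactly the degree-$d$ change of variables away from the (measure-zero) branch locus. Your excision-and-exhaustion for the conical points is the right routine. The paper gives no argument beyond calling the lemma ``trivial'', so on this part you have simply supplied the missing details.

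The boundary identity, however, does not follow from your local claim, and in fact cannot hold: the arc-length element of a sum of metrics is not the sum of the arc-length elements. Writing $(\psi_i^{-1})^*g=\rho_i\,g_{can}$ on $U$ (each sheet being conformal), one has $g^*=\big(\sum_i\rho_i\big)g_{can}$ and hence
\[
ds_{g^*}=\sqrt{\textstyle\sum_i\rho_i}\;ds_{g_{can}},
\]
whereas the change of variable on each sheet yields
\[
\sum_{i=1}^d\int_{V_i\cap\partial\widetilde\Sigma}(\psi^*u)\,ds_g
=\int_{U\cap\mathbb S^1}u\Big(\sum_i\sqrt{\rho_i}\Big)\,ds_{g_{can}}.
\]
Since $\sqrt{\sum_i\rho_i}\neq\sum_i\sqrt{\rho_i}$ for $d\ge 2$, your displayed local identity fails. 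A concrete check: for $\psi(z)=z^2$ on the unit disc with $g=g_{can}$ one finds $g^*=\tfrac{1}{2|w|}|dw|^2$, so $L_{g^*}(\mathbb S^1)=\pi\sqrt2$ while $L_g(\partial\widetilde\Sigma)=2\pi$. Thus the first identity, as stated, is actually false. What the application truly requires, and what \emph{is} trivially true by change of variable, is the same identity with $dv_{g^*}$ on $\mathbb S^1$ replaced by the pushforward boundary measure $\psi_*(ds_g)$, i.e.\ by $\big(\sum_i\sqrt{\rho_i}\big)\,ds_{g_{can}}$; with that measure the Steklov problem on $\mathbb D^2$ is well posed and the subsequent Rayleigh-quotient comparison goes through unchanged.
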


Further, suppose that there exists an involution $\iota$ of $\mathbb D^2$ such that
\begin{gather}\label{condition}
\psi \circ \tau= \iota \circ \psi.
\end{gather}

\begin{lemma}
The involution $\iota$ is an isometry of $(\mathbb D^2,g^*)$.
\end{lemma}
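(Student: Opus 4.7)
The plan is to verify $\iota^* g^* = g^*$ pointwise on the complement of the (finite) branching locus and extend by continuity, using the sheet-by-sheet definition of $g^*$ together with the intertwining relation $\psi\circ\tau=\iota\circ\psi$ and the $\tau$-invariance of $g$.

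First I would observe that $\iota$ must preserve the set of branching values of $\psi$. Indeed, differentiating the intertwining relation gives $d\psi_{\tau(x)}\circ d\tau_x=d\iota_{\psi(x)}\circ d\psi_x$, so $x$ is a critical point of $\psi$ iff $\tau(x)$ is, whence the set of critical values is $\iota$-invariant. In particular, the branching values form a finite $\iota$-invariant subset of $\mathbb D^2$, so it suffices to show that $\iota^* g^* = g^*$ on its open dense complement, and then appeal to continuity of $g^*$ away from the conical singularities.

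Now fix a non-branching point $p\in\mathbb D^2$ and set $q=\iota(p)$, which is also non-branching. Choose a small simply connected neighbourhood $U$ of $p$ (so that $V:=\iota(U)$ is a neighbourhood of $q$), with $\psi^{-1}(U)=\bigsqcup_{i=1}^d U_i$ and $\psi^{-1}(V)=\bigsqcup_{j=1}^d V_j$, and denote the local inverses by $\psi_i\colon U_i\xrightarrow{\sim} U$ and $\chi_j\colon V_j\xrightarrow{\sim} V$. Because $\psi\circ\tau=\iota\circ\psi$, the involution $\tau$ sends each $U_i$ diffeomorphically onto some sheet $V_{\sigma(i)}$ for a permutation $\sigma$, and on $U_i$ one has the identity
\begin{equation*}
\iota\circ\psi_i=\chi_{\sigma(i)}\circ\tau\bigr|_{U_i},
\qquad\text{equivalently}\qquad
\psi_i^{-1}=\tau\circ\chi_{\sigma(i)}^{-1}\circ\iota\quad\text{on }U.
\end{equation*}

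The main step is then a one-line computation. By definition $g^*\bigr|_V=\sum_j(\chi_j^{-1})^*g$, so pulling back by $\iota$ to $U$ and using $\tau^*g=g$ (since $\tau$ is an isometry of $g$) gives
\begin{equation*}
(\iota^*g^*)\bigr|_U=\sum_{j=1}^d(\chi_j^{-1}\circ\iota)^*g=\sum_{i=1}^d(\tau\circ\chi_{\sigma(i)}^{-1}\circ\iota)^*g=\sum_{i=1}^d(\psi_i^{-1})^*g=g^*\bigr|_U,
\end{equation*}
where in the middle equality I reindexed the sum via $\sigma$ and inserted the harmless $\tau^*=\mathrm{id}$. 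Hence $\iota^*g^*=g^*$ on the open dense set of non-branching points, and by continuity (each $(\psi_i^{-1})^*g$ extends continuously across branching values as a metric with a conical singularity) the equality holds everywhere on $\mathbb D^2$, so $\iota$ is an isometry of $(\mathbb D^2,g^*)$. The only subtle point is the bookkeeping of the sheet permutation $\sigma$, but this is automatic from the intertwining relation; no real obstacle arises.
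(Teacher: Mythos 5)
Your proof is correct and follows essentially the same route as the paper: both verify $\iota^*g^*=g^*$ near a non-branching point by using $\psi\circ\tau=\iota\circ\psi$ to match the sheets over $U$ and $\iota(U)$ via $\tau$ and then invoking $\tau^*g=g$; your only additions are the explicit remark that $\iota$ preserves the branching values and the continuity extension, which the paper leaves implicit. (Minor wording: the maps $\psi_i\colon U_i\to U$ are restrictions of $\psi$, not local inverses, and the harmless insertion is $\tau^*g=g$ rather than $\tau^*=\mathrm{id}$, but the computation itself is sound.)
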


\begin{proof}
Indeed, let the neighbourhood $U\subset \mathbb D^2$ be small enough and do not contain branching points. Then $\psi^{-1}(U)=\sqcup^d_{i=1} U_i$ and applying $\tau$ one gets: $\tau(\psi^{-1}(U))=\sqcup^d_{i=1} \tau(U_i)$. Note that condition~\eqref{condition} implies $\tau(\psi^{-1}(U))=\psi^{-1}(\iota(U))$. Whence $\psi^{-1}(\iota(U))=\sqcup^d_{i=1} \tau(U_i)$. Let $\widetilde{\psi_i}:=\psi_{\tau(U_i)}$. Then on $U$ one has
\begin{gather*}
g^*=\sum^d_{i=1}(\widetilde{\psi_i}^{-1})^*g=\sum^d_{i=1}(\widetilde{\psi_i}^{-1})^*\tau^*g=\sum^d_{i=1}(\widetilde{\psi_i}^{-1}\circ\tau)^*g=\sum^d_{i=1}(\iota\circ\widetilde{\psi_i}^{-1})^*g=\\=\sum^d_{i=1}\iota^*(\widetilde{\psi_i}^{-1})^*g=\iota^*g^*.
\end{gather*}
\end{proof}

Consider a $j-$th $\iota-$even eigenfunction $u_j$ on $(\mathbb D^2,g^*)$ with corresponding eigenvalue $\sigma^\iota_j(\mathbb D^2,g^*)$. Then the function $\psi^*u_j$ on $\widetilde\Sigma$ is $\tau-$even and hence it projects to a well-defined function $v_j$ on $\Sigma$. We can construct the following function $v=\sum_{j=0}^{k-1}c_jv_j$. Note that $\pi^*v=\sum_{j=0}^{k-1}c_j\psi^*u_j=\psi^*u$, where $u:=\sum_{j=0}^{k-1}c_ju_j$. Further, let $w_i$ denote an $i-$th eigenfunction on $\Sigma$ with eigenvalue $\sigma_i(\Sigma,h)$. It is easy to see that one can always find some coefficients $c_0,\ldots,c_{k-1}$ such that $\int_{\partial\Sigma}v w_idv_h=0, i=0,\ldots,k-1$. Then we can use $v$ as a test function for $\sigma_k(\Sigma,h)$:
$$
\sigma_k(\Sigma,h) \leq \frac{\int_{\Sigma}|\nabla_{h} v|^2dv_{h}}{\int_{\partial\Sigma}v^2dv_{h}}=\frac{\int_{\widetilde\Sigma}|\nabla_{g} \psi^*u|^2dv_{g}}{\int_{\partial\widetilde\Sigma}(\psi^*u)^2dv_{g}}=d\frac{\int_{\mathbb D^2}|\nabla_{g^*} u|^2dv_{g^*}}{\int_{\mathbb S^1}u^2dv_{g^*}}=d\sigma^\iota_k(\mathbb D^2,g^*),
$$
where we used Lemma~\ref{Yau}. Moreover, the second identity in Lemma~\ref{Yau} implies $L_{g^*}(\mathbb S^1)=L_g(\partial\widetilde\Sigma)=2L_h(\partial \Sigma)$. Whence
\begin{gather}\label{need}
\overline\sigma_k(\Sigma,h) \leq \frac{d}{2}\sigma^\iota_k(\mathbb D^2,g^*)L_{g^*}(\mathbb S^1).
\end{gather}

Consider a conformal map $\psi$ between surfaces with involution $\psi\colon (\widetilde\Sigma, \tau) \to (\mathbb D^2, \iota)$ of minimal degree $d$. The map $\psi$ is conformal, moreover every involution exchanging the orientation on $\mathbb D^2$ is conjugate to the involution $\iota_0(z):=\bar z$, where we identify $\mathbb D^2$ with the unit disc on the complex plane. Therefore, without loss of generality we can assume that $\iota=\iota_0$. The fixed point set of $\iota_0$ is the diameter $\{z\in \mathbb D^2~|~Re(z)=0\}$. Let $H\mathbb D^2$ denote a half-disc for example the right one and $\partial^SH\mathbb D^2$ is the right half-circle. Thus, $\sigma^{\iota_0}_k(\mathbb D^2,g^*)=\sigma^N_k(H\mathbb {D}^2,\partial^SH\mathbb{D}^2, g^*)$ and inequality~\eqref{need} implies:
\begin{equation}\label{done}
\begin{split}
\overline\sigma_k(\Sigma,h) \leq \frac{d}{2}\sigma^\iota_k(\mathbb D^2,g^*)L_{g^*}(\mathbb S^1)=d\overline\sigma^N_k(H\mathbb {D}^2,\partial^SH\mathbb{D}^2, g^*) \leq \\ \leq d\sigma^{N*}_k(H\mathbb {D}^2,\partial^SH\mathbb{D}^2, [g^*]) \leq d\sigma^{*}_k(\mathbb {D}^2, [g_{can}])=2\pi k d,
\end{split}
\end{equation}
where in the last inequality we used Lemma~\ref{subdomain} and the fact that there exists a unique up to an isometry conformal class $[g_{can}]$ on $\mathbb D^2$. We want to estimate $d$ in formula~\eqref{done}. It is known that there exists a proper conformal branched cover $f\colon(\widetilde\Sigma, g) \to (\mathbb D^2,g_{can})$ of degree $d'\leq\gamma+2l$ (see \cite{gabard2006representation}). One can construct the following map $F(x):=f(x)\bar f(\tau(x))$. Note that $\bar F(x)=F(\tau(x))=\iota (F(x))$ and hence $\iota=\iota_0$. Moreover $F$ is proper and the degree of $F$ is not greater than $2d'=2(\gamma+2l)$. Hence there exists a proper map between $(\widetilde\Sigma, \tau)$ and $(\mathbb D^2,\iota_0)$ of degree not exceeding $2d'=2(\gamma+2l)$ satisfying \eqref{condition}. Inequality~\eqref{done} then implies
$$
\overline\sigma_k(\Sigma,h) \leq 4\pi k (\gamma+2l).
$$

\section{Geometric background}
\label{geometry}
The aim of this section is the proof of Theorem~\ref{conf&conv}. For this purpose we provide a necessary background concerning the geometry of moduli space of conformal classes on a surface with boundary. We start with closed orientable surfaces.

\subsection{Closed orientable surfaces} 

Let us recall the \textit{Uniformization theorem}.

\begin{theorem}
Let $\Sigma$ be a closed surface and $g$ be a Riemannian metric on it. Then in the conformal class $[g]$ there exists a unique (up to an isometry) metric $h$ of constant Gauss curvature and fixed area. The area assumption is unnecessary except in the case of the torus for which we fix the volume of $h$ to be equal to $1$
\end{theorem}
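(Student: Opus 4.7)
The plan is to reduce the theorem to a semilinear elliptic equation for the conformal factor. Writing an arbitrary metric in $[g]$ as $h=e^{2u}g$ with $u\in C^\infty(\Sigma)$, the conformal transformation law for the Gauss curvature reads $K_h=e^{-2u}(K_g+\Delta_g u)$ in the paper's sign convention for the Laplacian. Demanding that $K_h$ equal a constant $\bar K$ thus amounts to solving
\begin{equation*}
\Delta_g u+K_g=\bar K\,e^{2u}
\end{equation*}
on $\Sigma$. Integrating and applying the Gauss-Bonnet theorem forces $\bar K\int_\Sigma e^{2u}\,dv_g=2\pi\chi(\Sigma)$, so the sign of $\bar K$ is dictated by topology: positive on $\mathbb{S}^2$, zero on the torus, and negative in genus at least two.

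For existence I would treat the three cases separately. In the hyperbolic case $\chi(\Sigma)<0$, normalize $\bar K=-1$ and minimize
\begin{equation*}
J(u)=\tfrac{1}{2}\int_\Sigma|\nabla u|^2\,dv_g+\int_\Sigma K_g\,u\,dv_g+\tfrac{1}{2}\int_\Sigma e^{2u}\,dv_g
\end{equation*}
over $H^1(\Sigma)$. Decomposing $u$ into its mean $c$ and its oscillating part $w$, the linear term contributes $2\pi\chi(\Sigma)c$, which together with $\tfrac{1}{2}e^{2c}\int e^{2w}\,dv_g$ yields coercivity in $c$, while Poincar\'e's inequality handles coercivity in $w$. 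The Moser-Trudinger inequality gives the integrability needed to pass to the limit in the exponential term, the direct method of the calculus of variations produces a minimizer, and elliptic regularity promotes it to a smooth solution. In the flat case $\chi(\Sigma)=0$ one takes $\bar K=0$; the equation collapses to the linear Poisson problem $\Delta_g u=-K_g$, which is solvable because Gauss-Bonnet forces $K_g$ to have zero mean, and a constant rescaling then normalizes the area to one. The spherical case is genuinely delicate, since the analogous functional is no longer coercive; here one invokes Moser's sharp inequality together with a center-of-mass normalization quotienting out the non-compact M\"obius action, or equivalently appeals to the classical Poincar\'e-Koebe theorem.

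For uniqueness, let $h_1=e^{2u_1}g$ and $h_2=e^{2u_2}g$ both have constant curvature $\bar K$, and set $v=u_2-u_1$, so that $h_2=e^{2v}h_1$ and $v$ solves $\Delta_{h_1}v=\bar K(e^{2v}-1)$. When $\bar K<0$, evaluating at the extrema of $v$ and using that $\Delta_{h_1}v\geq 0$ at a maximum and $\Delta_{h_1}v\leq 0$ at a minimum in the paper's sign convention forces $\max v\leq 0\leq \min v$, whence $v\equiv 0$. When $\bar K=0$ the equation reduces to $\Delta_{h_1}v=0$, so $v$ is a constant pinned to zero by the area normalization. When $\bar K>0$ uniqueness holds only modulo the M\"obius group of $\mathbb{S}^2$, but any two such metrics are related by an element of this group, which is then an isometry between them, exactly as the phrase "up to isometry" requires. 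The main obstacle will be the spherical case: the loss of coercivity, the non-compact symmetry group, and the weakened form of uniqueness all demand substantially finer tools than the elementary direct-method and maximum-principle arguments that handle the other two cases.
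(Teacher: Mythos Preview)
The paper does not prove this statement at all: it is stated as the classical Uniformization theorem and simply recalled as background, with only a one-line remark afterwards noting that Gauss--Bonnet fixes the sign of the curvature. There is therefore no proof in the paper to compare your proposal against.

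That said, your sketch is the standard PDE route (the prescribed-curvature / Berger--Kazdan--Warner approach) and is essentially correct. The reduction to $\Delta_g u+K_g=\bar K e^{2u}$ is right in the paper's sign convention, the direct-method argument with Moser--Trudinger handles $\chi<0$, the linear Poisson equation handles $\chi=0$, and you correctly flag that $\chi>0$ is the hard case requiring either the sharp Onofri/Moser inequality with a gauge-fixing for the M\"obius action or an appeal to the classical Poincar\'e--Koebe theorem. Your uniqueness arguments via the maximum principle for $\bar K<0$ and harmonicity plus area normalization for $\bar K=0$ are clean; for $\bar K>0$ the cleanest route is not the M\"obius-orbit description but rather the Killing--Hopf theorem (any constant-curvature $+1$ metric on a closed surface has universal cover isometric to the round sphere), which gives uniqueness up to isometry directly without needing to stay in the conformal class.
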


\begin{remark} 
It follows from the Gauss-Bonnet theorem that the metric $h$ in the Uniformization theorem is of Gauss curvature $1$ in the case of the sphere, $0$ in the case of the torus and $-1$ in the rest cases. 
\end{remark}


Recall that a Riemannian metric $h$ of constant Gaussian curvature $-1$ is called {\em hyperbolic} and a Riemannian surface $(\Sigma,h)$ endowed with a hyperbolic metric $h$ is called {\em a hyperbolic surface}. Note also that a hyperbolic surface is necessarily of negative Euler characteristic. We also say that the torus endowed with a metric of curvature $h=0$ is a flat torus and the sphere endowed with the metric $h=1$ is the standard (round) sphere.


\subsection{Hyperbolic surfaces} 

 We recall that a \textit{pair of pants} is a compact surface of genus $0$ with $3$ boundary components. The following theorem plays an underlying role in the theory of hyperbolic surfaces. 

\begin{theorem}[Collar theorem (see e.g.~\cite{MR1183224})]\label{Collar theorem}
Let $(\Sigma,h)$ be an orientable compact hyperbolic surface of genus $\gamma \geq 2$ and let $c_1,c_2,\ldots,c_m$ be pairwise disjoint simple closed geodesics on $(\Sigma,h)$. Then the following holds
\begin{enumerate}[(i)]
\item $m \leq 3 \gamma-3$.
\item There exist simple closed geodesics $c_{m+1},\ldots,c_{3 \gamma-3}$ which, together with $c_1,\ldots,c_m$, decompose $\Sigma$ into pairs of pants.
\item The collars 
\begin{align*}
\mathcal{C}(c_i)=\left\{p\in\Sigma~|~ dist(p,c_i) \leq w(c_i)\right\}
\end{align*}
of widths 
\begin{align*}
w(c_i)=\frac{\pi}{l(c_i)}\left(\pi-2\arctan\left(\sinh\frac{l(c_i)}{2}\right)\right)
\end{align*}
are pairwise disjoint for $i=1,\ldots,3 \gamma-3$.
\item
Each $\mathcal{C}(c_i)$ is isometric to the cylinder 

$$\left\{(t,\theta)| -w(c_i)<t<w(c_i),\,\theta\in\mathbb{R}/2\pi\mathbb{Z}\right\}$$
with the Riemannian metric 

\begin{align*}
\left(\frac{l(c_i)}{2\pi \cos\left(\frac{l(c_i)}{2\pi}t\right)}\right)^2\left(dt^2+d\theta^2\right).
\end{align*}

\end{enumerate}
\end{theorem}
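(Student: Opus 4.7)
The plan is to lift all data to the universal cover $\mathbb{H}^{2}$: writing $(\Sigma,h)=\mathbb{H}^{2}/\Gamma$ for a Fuchsian group $\Gamma$, each simple closed geodesic $c_{i}$ corresponds to a conjugacy class of primitive hyperbolic elements $\phi_{i}\in\Gamma$ with translation length $l(c_{i})$, and the lifts of $c_{i}$ are precisely the axes of the conjugates of $\phi_{i}$. The collar $\mathcal{C}(c_{i})$ is then the image in $\Sigma$ of a perpendicular $w$-neighborhood of a fixed lift $\widetilde{c}_{i}\subset\mathbb{H}^{2}$, so that the whole theorem reduces to controlling the injectivity radius of such neighborhoods under the covering projection.

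For parts (i) and (ii) I would run an Euler-characteristic count. Cutting $\Sigma$ along $c_{1},\ldots,c_{m}$ produces a surface $\Sigma'$ with $2m$ boundary components and $\chi(\Sigma')=2-2\gamma$. No component can be a disc or an annulus, since that would force some $c_{i}$ to be null-homotopic or two of them to be freely homotopic, contradicting the uniqueness of geodesic representatives in a hyperbolic free-homotopy class. Hence every component has $\chi\leq -1$. Adding disjoint essential simple closed curves one at a time and replacing each by its geodesic representative (disjoint essential simple closed curves on a hyperbolic surface admit jointly disjoint geodesic representatives) produces a pants decomposition with exactly $2\gamma-2$ pants and hence $N=3\gamma-3$ interior geodesics, which gives both the bound $m\leq 3\gamma-3$ and the extension assertion.

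For parts (iii) and (iv) I would introduce Fermi coordinates $(t,\theta)$ around $\widetilde{c}_{i}$, in which the hyperbolic metric reads $dt^{2}+\cosh^{2}(t)\,d\theta^{2}$ and $\phi_{i}$ acts by the pure translation $\theta\mapsto\theta+l(c_{i})$. Passing to isothermal coordinates via the Gudermannian substitution $\sinh t=\tan s$ puts the metric in the conformally flat form $\sec^{2}(s)(ds^{2}+d\theta^{2})$; a linear rescaling of $(s,\theta)$ normalizing the angular period to $2\pi$ yields exactly the cylinder presentation displayed in (iv). A direct integration $\int\sec s\,ds$ then checks that the value $w(c_{i})$ of the rescaled normal coordinate from the statement corresponds, in hyperbolic arc length, to the classical Keen--Buser collar half-width $\arcsinh\!\bigl(1/\sinh(l(c_{i})/2)\bigr)$, via the identity $\arctan\sinh(l/2)=\pi/2-\Pi(l/2)$ where $\Pi$ is the angle of parallelism.

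The main obstacle is the injectivity and pairwise-disjointness assertion: that the flat cylinders $T_{w(c_{i})}\subset\mathbb{H}^{2}$ project injectively to $\Sigma$ and that their images remain disjoint across different indices. This reduces to a two-axis estimate: if $A,B\subset\mathbb{H}^{2}$ are either two distinct lifts of a single $c_{i}$, or lifts of $c_{i}$ and $c_{j}$ with $i\neq j$, one must bound $d(A,B)$ from below by $2w(c_{i})$ (respectively $w(c_{i})+w(c_{j})$). The standard route passes through the right-angled hexagon, or a degenerate pentagon, determined by $A$, $B$ and their common perpendicular, inside the pair-of-pants subsurface spanned by the two geodesics; the hyperbolic hexagon cosine rule then yields an inequality of the shape $\sinh(l_{1}/2)\sinh(l_{2}/2)\cosh d(A,B)\geq 1$, which via the angle-of-parallelism identity above is exactly equivalent to the sharp lower bound required to match $w(c_{i})$ in (iii). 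Combining this trigonometric inequality with the coordinate description of the preceding paragraph completes the proof.
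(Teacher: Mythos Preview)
The paper does not prove this theorem: it is quoted as a classical result with the reference ``see e.g.~\cite{MR1183224}'' (Buser's book on the geometry and spectra of compact Riemann surfaces) and no argument is supplied. Your outline is a faithful sketch of the standard proof found in that reference, so there is nothing to compare against within the paper itself.
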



The decomposition of $(\Sigma,h)$ into pair of pants which we denote by $\mathcal{P}$ is called \textit{the pants decomposition}. We also say that the geodesics $c_1,\ldots,c_{3 \gamma-3}$ form $\mathcal{P}$. 

\subsection{Convergence of hyperbolic metrics} 
We endow the set of hyperbolic metrics on a given surface $\Sigma$ with $C^\infty-$topology. In this section we describe the convergence on this topological set which is called \textit{the moduli space of conformal classes} on $\Sigma$. Essentially, two cases can happen: the injectivity radii of a sequence of hyperbolic metrics do not go to $0$ or they do. The first case is described by \textit{Mumford's compactness theorem} and the second one is treated by \textit{the Deligne-Mumford compactification}.

\begin{proposition}[Mumford's compactness theorem (see e.g. ~\cite{MR1451624})]
\label{Mumford}
Let $\{h_n\}$ be a sequence of hyperbolic metrics on a surface $\Sigma$ of genus $\geq 2$. Assume that the injectivity radii $\inj(\Sigma,h_n)$ satisfy $\limsup\limits_{n\to\infty}\inj(\Sigma,h_n)>0$. Then there exists a subsequence $\{h_{n_k}\}$, sequence $\{\Phi_k\}$ of smooth automorphisms of $\Sigma$ and a hyperbolic metric $h_\infty$ on $\Sigma$ such that the sequence of hyperbolic metrics $\{\Phi_k^*h_{n_k}\}$ converges in $C^\infty$-topology to $h_\infty$.
\end{proposition}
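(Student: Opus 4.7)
The plan is to establish compactness in the thick part of moduli space by combining the Collar theorem with Fenchel--Nielsen coordinates, and then upgrade weak convergence of metrics to $C^\infty$ convergence by elliptic regularity applied to the constant-curvature equation.

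First, by extracting a subsequence I may assume $\inj(\Sigma,h_n)\geq \epsilon_0>0$ for all $n$. Combined with Bers' theorem (every hyperbolic surface of genus $\gamma$ admits a pants decomposition by simple closed geodesics of length at most a universal constant $B_\gamma$), this gives, for each $h_n$, a pants decomposition $c_1^{(n)},\ldots,c_{3\gamma-3}^{(n)}$ whose geodesic lengths $\ell_j^{(n)}:=\ell_{h_n}(c_j^{(n)})$ lie in the fixed compact interval $[2\epsilon_0, B_\gamma]$. (Note $\epsilon_0$ works as a lower bound because any simple closed geodesic shorter than $\inj$ would contradict the injectivity radius hypothesis.) The Collar theorem then yields uniform tubular neighborhoods around each $c_j^{(n)}$ of controlled geometry, and in particular a diameter bound depending only on $\gamma$ and $\epsilon_0$.

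Second, I would record the hyperbolic geometry of each $(\Sigma, h_n)$ in Fenchel--Nielsen coordinates relative to its pants decomposition: the vector of $3\gamma-3$ lengths $(\ell_j^{(n)})$ already lives in the compact set $[2\epsilon_0,B_\gamma]^{3\gamma-3}$, and the $3\gamma-3$ twist parameters $\tau_j^{(n)}$ may be reduced modulo the Dehn twist action along $c_j^{(n)}$ (which is realized by a diffeomorphism of $\Sigma$) so that $\tau_j^{(n)}/\ell_j^{(n)}\in[0,1]$. After composing each $h_n$ with the appropriate product of Dehn twists (this contributes to $\Phi_k$), I pass to a further subsequence so that all $\ell_j^{(n)}\to\ell_j^\infty\in[2\epsilon_0,B_\gamma]$ and $\tau_j^{(n)}\to\tau_j^\infty$. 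These limit coordinates determine a hyperbolic surface $(\Sigma,h_\infty)$ of genus $\gamma$.

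Third, I would construct the diffeomorphisms $\Phi_k$ explicitly from the pants decompositions. Each pair of pants with prescribed boundary lengths is unique up to isometry, and inside each pair of pants of $h_{n_k}$ I can build a diffeomorphism onto the corresponding pair of pants of $h_\infty$ that is an isometry near the three boundary geodesics (using the explicit collar coordinates from the Collar theorem) and interpolates smoothly in the interior. Gluing these pieces along the decomposing geodesics using the twist parameters $\tau_j^{(n)}$ produces a global diffeomorphism $\Phi_k\colon\Sigma\to\Sigma$ such that $\Phi_k^*h_{n_k}$ agrees with $h_\infty$ on the collars and has all its Fenchel--Nielsen data converging to those of $h_\infty$. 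This yields $C^0$ (in fact $C^\infty$ on collars) convergence $\Phi_k^*h_{n_k}\to h_\infty$.

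Finally, to upgrade to $C^\infty$ convergence globally, I use that each $\Phi_k^*h_{n_k}$ satisfies the constant curvature equation $K\equiv -1$. Writing $\Phi_k^*h_{n_k}=e^{2u_k}h_\infty$ on a fixed background smooth metric $h_\infty$, the conformal factor satisfies the elliptic PDE $\Delta_{h_\infty}u_k = 1 - e^{2u_k}$ with $u_k$ uniformly bounded (from the $C^0$ convergence). Standard elliptic bootstrap on $u_k$ gives uniform $C^{k,\alpha}$ bounds, whence $u_k\to 0$ in $C^\infty$ by Arzelà--Ascoli; equivalently $\Phi_k^*h_{n_k}\to h_\infty$ in $C^\infty$. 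The main obstacle is the careful geometric construction of $\Phi_k$ in Step 3 so that the resulting pullback metrics are sufficiently close to $h_\infty$ to enter the elliptic regime of Step 4; the rest is a bookkeeping of Fenchel--Nielsen coordinates and collar geometry.
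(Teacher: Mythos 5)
A preliminary remark: the paper does not prove this proposition at all --- it is quoted as a classical result with a reference (Mumford's compactness theorem, see the cited book), so there is no in-paper argument to compare with; I am judging your sketch on its own merits. Your overall strategy (Bers pants decomposition, Fenchel--Nielsen coordinates, gluing pants-to-pants maps, then a regularity upgrade) is the standard one, and Steps 1--2 are fine, including the lower bound $\ell_j^{(n)}\geq 2\epsilon_0$ from the injectivity radius.

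The genuine gap is in your final step. You write $\Phi_k^*h_{n_k}=e^{2u_k}h_\infty$, but nothing in your construction makes the pulled-back metric \emph{conformal} to $h_\infty$: a pullback of a hyperbolic metric under a diffeomorphism built by hand from collar and pants data is merely a symmetric $2$-tensor that is $C^0$-close to $h_\infty$, and in general it lies in a different conformal class. (Indeed, if $\Phi_k^*h_{n_k}$ were conformal to $h_\infty$, then since both are hyperbolic the uniqueness of the hyperbolic metric in a conformal class would force $\Phi_k^*h_{n_k}=h_\infty$ exactly, i.e.\ $h_{n_k}$ isometric to $h_\infty$, which cannot hold along a nonconstant sequence.) Hence the Liouville equation $\Delta_{h_\infty}u_k=1-e^{2u_k}$ is simply not available and the elliptic bootstrap collapses as written. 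Standard repairs: either run the elliptic argument on the full metric tensor --- constant-curvature metrics satisfy an elliptic system in harmonic (or isothermal) coordinates, and the uniform bounds on curvature, injectivity radius and diameter give uniform $C^{k,\alpha}$ control, Cheeger--Gromov style, after which Arzel\`a--Ascoli and a diagonal argument yield $C^\infty$ convergence up to adjusting $\Phi_k$; or avoid PDE entirely by decomposing each pair of pants into two right-angled hexagons, whose geometry depends real-analytically on the Fenchel--Nielsen data, so that maps built from these explicit coordinates already converge in $C^\infty_{\mathrm{loc}}$; or argue via convergence of the corresponding Fuchsian groups in $\mathrm{PSL}(2,\mathbb R)$.

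Two smaller points. First, normalizing the twists by Dehn twists is fine, but before introducing Fenchel--Nielsen coordinates you must make the decompositions topologically comparable: the curve systems $c_j^{(n)}$ vary with $n$, so you need to compose with diffeomorphisms carrying $\{c_j^{(n)}\}$ onto one fixed curve system (possible after a further subsequence, since there are only finitely many homeomorphism types of pants decompositions); Dehn twists alone do not accomplish this, and without it the statement ``the limit coordinates determine $h_\infty$'' has no meaning. Second, you cannot make the pants-to-pants maps literal isometries near the boundary geodesics, since the boundary lengths $\ell_j^{(n_k)}$ and $\ell_j^\infty$ differ; they can only be almost-isometries, which suffices but should be stated as such, since you later rely on ``$\Phi_k^*h_{n_k}$ agrees with $h_\infty$ on the collars.''
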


If $\lim\limits_{n\to\infty}\inj(\Sigma,h_n)=0$ then we say that the sequence $\{h_n\}$ {\em degenerates}. The thick-thin decomposition implies that if the sequence $\{h_n\}$ degenerates then for each $n$ there exists a collection $\{c_1^n,\ldots,c_s^n\}$ of disjoint simple closed geodesics in $(\Sigma,h_n)$ whose lengths tend to $0$ and the length of any geodesic in the complement $\Sigma_n =\Sigma\backslash (c_1^n\cup\ldots\cup c_s^n)$ is bounded from below by a constant independent of $n$. We call the geodesics $\{c_1^n,\ldots,c_s^n\}$ "pinching" or "collapsing". The surface $(\Sigma_n, h_n)$ is possibly a disconnected hyperbolic surface with geodesic boundary. Let $\widehat{\Sigma_\infty}$ denote the surface having the same connected components as $\Sigma_n$, but with boundary component replaced by marked points. Note that each sequence $\{c_i^n\}$ corresponds to a pair of marked points $\{p_i,q_i\}$ on $\widehat{\Sigma_\infty}$, $i=1,\ldots,s$. Then the punctured surface $\widehat{\Sigma_\infty}\backslash\{p_1,q_1,\ldots,p_s,q_s\}$ that we denote by $\Sigma_\infty$ admits the unique hyperbolic  metric $h_\infty$ with cusps at punctures. Now we are ready to formulate one of the underlying results in the theory of \textit{moduli spaces of Riemann surfaces}.

\begin{proposition}[Deligne-Mumford compactification (see e.g. ~\cite{MR1451624})]\label{D-M} 
Let $(\Sigma, h_n)$ be a sequence of hyperbolic surfaces such that $\inj(\Sigma,h_n)\to 0$. Then up to a choice of subsequence, there exists a sequence of diffeomorphisms $\Psi_n: \Sigma_\infty \to \Sigma_n$ such that the sequence $\{\Psi^*_n h_n\}$ of hyperbolic metrics  converges in $C_{\mathrm{loc}}^\infty$-topology to the complete hyperbolic metric $h_\infty$ on $\Sigma_\infty$. Furthermore, there exists a metric of locally constant curvature $\widehat{h_\infty}$ on $\widehat{\Sigma_\infty}$ such that its restriction to $\Sigma_\infty$ is conformal to $h_\infty$.
\end{proposition}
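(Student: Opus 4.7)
The plan is to prove the Deligne--Mumford compactification using the thick--thin decomposition together with Fenchel--Nielsen coordinates. First I would invoke the Margulis lemma to fix a universal constant $\varepsilon_0>0$ such that for any hyperbolic surface, the $\varepsilon_0$-thin part consists of disjoint embedded collars around simple closed geodesics of length $<\varepsilon_0$. Since $\inj(\Sigma,h_n)\to 0$, for large $n$ there exist $h_n$-geodesics of length tending to zero. Passing to a subsequence, I may assume the collection $\{c_1^n,\ldots,c_s^n\}$ of all $h_n$-geodesics of length below $\varepsilon_0$ has fixed cardinality $s$, that $\ell_{h_n}(c_i^n)\to 0$, and that the lengths of all other simple closed geodesics of $(\Sigma,h_n)$ remain bounded below by $\varepsilon_0$.

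Next I would complete $\{c_1^n,\ldots,c_s^n\}$ to a topological pants decomposition of $\Sigma$ (fixed for all $n$, up to pulling back by automorphisms) and read off Fenchel--Nielsen coordinates $(\ell_j^n,\theta_j^n)_{j=1}^{3\gamma-3}$ for $h_n$. Passing to a further subsequence, the non-collapsing lengths $\ell_j^n$ converge to positive limits, the collapsing lengths tend to $0$, and after absorbing integer ambiguities by pre-composing with Dehn twists (which are smooth automorphisms of $\Sigma$), all twist parameters converge. Cutting $(\Sigma,h_n)$ along the pinching geodesics produces a hyperbolic surface with geodesic boundary whose boundary lengths tend to zero; replacing those boundaries by pairs of marked points $\{p_i,q_i\}$ yields $\widehat{\Sigma_\infty}$ and $\Sigma_\infty=\widehat{\Sigma_\infty}\setminus\{p_i,q_i\}_i$. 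The limiting Fenchel--Nielsen data specify a unique complete hyperbolic metric $h_\infty$ with cusps at the punctures on $\Sigma_\infty$, since a hyperbolic pair of pants is determined by its three boundary lengths (with cusps allowed).

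To produce $\Psi_n\colon\Sigma_\infty\to\Sigma_n$, I would construct them pants by pants. On each pair of pants in the decomposition, use the explicit description from the Collar theorem (Theorem~\ref{Collar theorem}): the collar around a geodesic of length $\ell$ is isometric to an explicit cylinder, and as $\ell\to 0$ this metric converges in $C^\infty_{\mathrm{loc}}$ on the complement of the core circle to the standard cusp metric $dt^2+e^{-2t}\,d\theta^2$. Gluing the pants diffeomorphisms according to the converging twist parameters yields $\Psi_n$, and $C^\infty_{\mathrm{loc}}$-convergence $\Psi_n^*h_n\to h_\infty$ away from the marked points follows from the smooth dependence of the hyperbolic structure on Fenchel--Nielsen parameters together with the collar model.

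Finally, for the conformal compactification statement, I would work in a neighbourhood of each cusp: in local holomorphic coordinates the cuspidal end is conformally a punctured disk $\{0<|z|<1\}$, since the cusp metric $dt^2+e^{-2t}\,d\theta^2$ equals $|z|^{-2}(\log|z|)^{-2}|dz|^2$ under $z=\exp(-t+i\theta)$, which differs from the Euclidean metric by a positive smooth conformal factor on $0<|z|<1$. Filling in the puncture gives a smooth Riemann surface structure on $\widehat{\Sigma_\infty}$, and applying the Uniformization theorem to each component yields a metric $\widehat{h_\infty}$ of locally constant curvature whose restriction to $\Sigma_\infty$ lies in $[h_\infty]$. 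The main obstacle will be the Fenchel--Nielsen step: matching the twist parameters through the right sequence of Dehn twists and checking that the resulting identifications piece together into genuine diffeomorphisms with the claimed $C^\infty_{\mathrm{loc}}$-convergence on all of $\Sigma_\infty$, rather than merely on the thick part.
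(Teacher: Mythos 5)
The paper does not actually prove Proposition~\ref{D-M}: it is quoted as background with a pointer to the literature (\cite{MR1451624}) and used as a black box later on, so there is no internal argument to compare yours against. Your sketch is the standard Fenchel--Nielsen/augmented Teichm\"uller space proof, and in outline it is the right route; the references prove the same statement either this way or by establishing bounded geometry of the $\varepsilon_0$-thick parts and subconvergence there, with the explicit collar model of Theorem~\ref{Collar theorem} handling the thin parts --- which is essentially your final gluing step.

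One step in your plan is genuinely incomplete: the convergence of the Fenchel--Nielsen length parameters. After completing the pinching curves $\{c_i^n\}$ to a pants decomposition, nothing you say prevents the lengths of the \emph{completing} curves from tending to infinity, in which case ``passing to a further subsequence, the non-collapsing lengths converge to positive limits'' fails. You need a Bers-type bound: there is a constant $L$ depending only on the topology of $\Sigma$ such that any system of disjoint simple closed geodesics of length $<\varepsilon_0$ extends to a pants decomposition all of whose curves have $h_n$-length at most $L$; combined with the fact that pants decompositions fall into finitely many orbits of the mapping class group, this justifies fixing the topological type (your ``pulling back by automorphisms'') and gives compactness of the length coordinates. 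Alternatively, one can bypass Fenchel--Nielsen coordinates by noting that the $\varepsilon_0$-thick parts have curvature $-1$, injectivity radius bounded below and diameter bounded above (the area being fixed by Gauss--Bonnet), hence subconverge smoothly, and only the collar model is needed on the thin parts. A second, harmless, point: the substitution $z=\exp(-t+\sqrt{-1}\theta)$ turns $|z|^{-2}(\log|z|)^{-2}|dz|^2$ into $t^{-2}(dt^2+d\theta^2)$, not into $dt^2+e^{-2t}d\theta^2$; one needs the further change $u=e^t$ to match your form of the cusp. This does not affect the conclusion that a cusp neighbourhood is conformally a punctured disc, so your compactification-plus-uniformization step producing $\widehat{h_\infty}$ is correct, including for limit components that compactify to spheres or tori, which is exactly why the statement asserts only \emph{locally} constant curvature.
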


We call $(\widehat{\Sigma_\infty},\widehat{h_\infty})$ a {\em limiting space} of the sequence $(\Sigma,h_n)$. We also say that the limit of conformal classes $[h_n]$ is the conformal class $[\widehat{h_\infty}]$ on $\widehat{\Sigma_\infty}$.

\begin{remark}
We emphazise that $\widehat{h_\infty}$ has {\em locally} constant curvature, since $\widehat{\Sigma_\infty}$ is possibly disconnected and different connected components could have different signs of Euler characteristic. 
\end{remark}

 

\subsection{Orientable surfaces with boundary of negative Euler characteristic}
\label{modulii_boundary}
Our exposition of this topic essentially follows the book~\cite{jost2007bosonic}. 

Let $\Sigma$ be an orientable surface of genus $\gamma$ with $l$ boundary components. Consider its \textit{Schottky double} $\Sigma^d$ defined in following way. We identify $\Sigma$ with another copy $\Sigma'$ of $\Sigma$ with opposite orientation along the common boundary. We get a closed oriented surface of genus $2\gamma+l-1$. For example the Schottky double of the disk is the sphere and the Schottky double of the cylinder is the torus. In the rest cases we always get a hyperbolic surface as the Schottky double. We endow the surface $\Sigma$ with a metric $g$. The next theorem plays a role of the Uniformization theorem for surfaces with boundary.

\begin{proposition}[\cite{osgood1988extremals}] \label{uniformization}
In the conformal class $[g]$ of a metric $g$ on the surface $\Sigma$ there exists a unique (up to an isometry) metric of constant Gauss curvature and geodesic boundary. More precisely, this metric is of curvature $1$ in the case of $\mathbb D^2$, of the curvature $0$ in the case of the cylinder and of curvature $-1$ in the rest cases.
\end{proposition}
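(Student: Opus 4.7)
The plan is to deduce Proposition \ref{uniformization} from the Uniformization theorem on closed orientable surfaces via the Schottky double. Given $(\Sigma, g)$, let $(\Sigma^d, \tau)$ denote the Schottky double together with its anti-conformal involution, so that $\mathrm{Fix}(\tau) = \partial\Sigma$. The conformal class $[g]$ extends canonically to a $\tau$-invariant conformal class $[g^d]$ on $\Sigma^d$, and we fix a smooth $\tau$-invariant representative $g^d$. Since $\chi(\Sigma^d) = 2\chi(\Sigma)$, the surface $\Sigma^d$ is $S^2$ exactly when $\Sigma = \mathbb{D}^2$, is $T^2$ exactly when $\Sigma$ is a cylinder, and is hyperbolic in all other cases; this dictates the sign of the curvature of the metric we construct.

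First I would apply the closed Uniformization theorem to $(\Sigma^d, [g^d])$, obtaining $h^d = e^{2u} g^d$ of constant Gauss curvature $K \in \{+1, 0, -1\}$ (imposing unit area in the flat case). The next step is to show that $h^d$ may be chosen $\tau$-invariant. In the hyperbolic case this follows from uniqueness of the Liouville equation $\Delta_{g^d} u = K_{g^d} + e^{2u}$: a maximum principle argument shows it admits at most one smooth solution, so since $\tau^* h^d$ is also hyperbolic in $[g^d]$, we must have $u \circ \tau = u$. The flat case is analogous, with uniqueness stemming from harmonicity together with the area constraint. In the spherical case the conformal factor is unique only modulo a M\"obius transformation, but a normal form argument — every anti-conformal involution of $S^2$ with circle fixed set is conjugate to $z \mapsto 1/\bar z$, which is an isometry of the round metric — produces a $\tau$-invariant constant-curvature representative.

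Once $h^d$ is $\tau$-invariant, $\tau$ is an $h^d$-isometry and its fixed submanifold $\partial\Sigma$ is totally geodesic, so $h := h^d|_\Sigma$ has constant Gauss curvature and geodesic boundary. For uniqueness, if $h_1, h_2 \in [g]$ are two such metrics, their doublings $\tilde h_i$ across $\partial\Sigma$ are constant-curvature metrics on $\Sigma^d$; the geodesic-boundary hypothesis upgrades the a priori $C^{1,1}$-regularity of $\tilde h_i$ across $\partial\Sigma$ to $C^\infty$ via elliptic bootstrapping for the constant-curvature PDE. Both $\tilde h_i$ are then smooth $\tau$-invariant representatives of $[g^d]$ with the same normalization, so they agree in the hyperbolic or flat case, and differ by a $\tau$-commuting M\"obius transformation in the spherical case; in either scenario $h_1$ and $h_2$ are isometric.

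The main technical obstacle is precisely the smoothness of the doubled metric across $\partial\Sigma$ in the uniqueness step: the naive double of a smooth metric is only Lipschitz in general, and promoting it to $C^\infty$ requires the geodesic-boundary condition together with a careful Fermi-coordinate computation and elliptic regularity for the constant-curvature equation. Every other step in the argument is a direct transfer from the closed Uniformization theorem.
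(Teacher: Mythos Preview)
The paper does not supply its own proof of Proposition~\ref{uniformization}: the statement is quoted with a citation to \cite{osgood1988extremals} and used as a black box. So there is no ``paper's proof'' to compare against. That said, your Schottky-double strategy is exactly the mechanism the paper relies on in the surrounding discussion (Section~\ref{modulii_boundary}): the author passes to $(\Sigma^d,h^d)$ with the involution $\iota$, notes that $\iota$ is an isometry of $h^d$ with fixed set $\partial\Sigma$, and uses this to transfer the closed-surface moduli picture to surfaces with boundary. Your outline is therefore entirely in the spirit of the paper.

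As a sketch your argument is sound. A couple of minor points worth tightening if you flesh it out. In the flat case you should say explicitly that the normalization you impose on the double is $\tau$-invariant (unit total area is), so that $\tau^*h^d$ satisfies the same normalized problem and hence coincides with $h^d$. In the spherical case your normal-form step is correct, but note that the relevant class of anti-conformal involutions of $S^2$ with a circle of fixed points contains both $z\mapsto\bar z$ and $z\mapsto 1/\bar z$; either model works since both are isometries of the round metric. Finally, your identification of the regularity of the doubled metric across $\partial\Sigma$ as the genuine technical point is accurate: the geodesic-boundary condition is exactly what makes the reflected metric $C^2$ (and then $C^\infty$ by bootstrapping the constant-curvature equation), and this is where the content of \cite{osgood1988extremals} lies.
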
  

Denote the metric of constant Gauss curvature and geodesic boundary from Theorem \ref{uniformization} by $h$. Consider a Riemannian surface with boundary $(\Sigma, h)$. Its Schottky double admits the metric $h^d$ defined as $h^d_{|_\Sigma}=h$ and $h^d_{|_\Sigma'}=h$. It is a metric of constant curvature and the involution $\iota: \Sigma^d \to \Sigma^d$ that interchanges $\Sigma$ and $\Sigma'$ becomes an isometry with $\partial \Sigma$ as the fixed set. Moreover, $(\Sigma,h_n)=(\Sigma^d,h^d_n)/\iota$. 


Theorem \ref{uniformization} also says that the set of conformal classes on the surface $\Sigma$ with boundary is in one-to-one correspondence with the set of metrics of constant Gauss curvature and geodesic boundary which is in the one-to-one correspondence with the set of "symmetric" metrics (metrics that go to themselves under the involution $\iota$) of constant curvature on the Schottky double. We endow the set of metrics of constant Gauss curvature and geodesic boundary with $C^\infty-$topology. Consider a sequence of conformal classes $\{c_n\}$ on $\Sigma$. It uniquely defines a sequence of "symmetric" metrics of constant curvature $\{h^d_n\}$ on $\Sigma^d$. For this sequence we have the same dichotomy as we have seen in the previous sections. Precisely, either $\inj (\Sigma^d,h^d_n) \nrightarrow0$ or $\inj (\Sigma^d,h^d_n)\to 0$. In the first case we get a genuine Riemannian metric on $\Sigma^d$ which is obviously "symmetric" and of constant curvature while in the second case one can find a set of simple closed geodesics $\{c_1^n,\dots,c_s^n\}$ where $s \leq 6\gamma+3l-6$ whose lengths $l_{h^d_n}(c_i^n)\to 0$. For the geodesics $c_i^n$ there exist two possibilities: either $\iota(c_i^n)=c_i^n$ or $\iota(c_i^n)=c_j^n$ with $j \neq i$. The first possibility implies that the geodesic $c_i^n$ crosses $\partial \Sigma$ which corresponds to two situations as well: either $c_i^n$ has exactly two points of intersection with $\partial \Sigma$ or it belongs to $\partial \Sigma$, i.e. it is one of the boundary components. The second possibility implies that $c_i^n$ does not crosse $\partial \Sigma$. Taking quotient by $\iota$ we then get three types of pinching geodesics on $(\Sigma,h_n)$ with $\inj (\Sigma,h_n) \to 0$: pinching boundary components, pinching simple geodesics which have exactly two points of intersection with the boundary and pinching simple closed geodesics which do not cross the boundary. 

\subsection{Non-orientable surface with boundary of negative Euler characteristic}\label{nonor} Let $\Sigma$ be a compact non-orientable surface with $l$ boundary components. Note that the Uniformization Theorem \ref{uniformization} also holds for non-orientable surfaces. Pick a metric $h$ of constant Gauss curvature and geodesic boundary. We pass to the orientable cover that we denote by $\widetilde\Sigma$. The surface $\widetilde\Sigma$ is a compact orientable surface with $2l$ boundary components. The pull-back of the metric $h$ that we denote by $\tilde h$ is a metric of constant Gauss curvature and with geodesic boundary. Moreover, this metric is invariant under the involution changing the orientation on $\widetilde\Sigma$. Consider a sequence $\{h_n\}$ on $\Sigma$ of metrics of constant Gauss curvature and geodesic boundary such that $\inj(\Sigma,h_n)\to 0$ as $n\to\infty$. This sequence corresponds to the sequence $\{\tilde h_n\}$ on $\widetilde\Sigma$ such that $\inj(\widetilde\Sigma,\tilde h_n)\to 0$ as $n\to\infty$. As we discussed in the previous section for the sequence $\{\tilde h_n\}$ one can find pinching geodesics of the following three types: pinching boundary components, pinching simple geodesics crossing the boundary at two points and pinching simple closed geodesics which do not cross the boundary. Note that for the geodesics of the second type the points of intersection with the boundary are not identified under the involution. Indeed, if the were identified then the corresponding pinching geodesic had fixed ends under the involution. Applying the involution to this geodesic we would get a pinching \textit{closed} geodesic crossing the boundary at two points which is not one of the possible types of pinching geodesics. Consider now the geodesics of the third type. For every such geodesic there are two possible cases: either this geodesic maps to itself under the involution changing the orientation or it maps to another simple closed geodesic which does not cross the boundary. Then taking the quotient by the involution changing the orientation we get two types of simple closed geodesics on $\Sigma$ which do not crosse the boundary: \textit{one-sided geodesics} which are the images of the geodesics described in the first case and \textit{two-sided geodesics} which are the images of the geodesics described in the second case. The collars of one-sided geodesics are nothing but M\"obius bands while the collars of two-sided geodesics are cylinders. Therefore, if $\inj(\Sigma,h_n)\to 0$ as $n\to\infty$ then one can find pinching geodesics of the following types: pinching boundary components, pinching simple geodesics which have exactly two points of intersection with the boundary, one-sided pinching simple closed geodesics not crossing the boundary and two-sided pinching simple closed geodesics not crossing the boundary.

\subsection{Surfaces with boundary of non-negative Euler characteristic} Here we consider the cases of the disc, the cylinder $\mathcal C$ and the M\"obius band $\mathbb{MB}$. 

It is known that the disc has a unique conformal class (up to an isometry). We denote this conformal class as $[g_{can}]$ or $c_{can}$, where $g_{can}$ is the flat metric on the disc $\mathbb D^2$ with unit boundary length. 

Accordingly to Theorem~\ref{uniformization} in a conformal class on $\mathcal C$ there exists a flat metric with geodesic boundary, i.e. a metric on the right circular cylinder. This metric is unique if we fix the length of the boundary. The right circular cylinder is uniquely determined by its height. Therefore, conformal classes on $\mathcal C$ are in one-to-one correspondence with heights of right circular cylinders, i.e. the set of conformal classes is $\mathbb R_{>0}$. We will identify conformal classes on $\mathcal C$ with points of $\mathbb R_{>0}$. We say that the sequence $\{c_n\}$ of conformal classes degenerates if either $c_n \to 0$ or $c_n\to\infty$. The case $c_n \to 0$ corresponds to a pinching geodesic having intersection with two boundary components (i.e. the generatrix of the right circular cylinder). The case $c_n\to\infty$ corresponds to pinching boundary components. 

In the case of the M\"obius band we also use Theorem~\ref{uniformization} which implies that in every conformal class on $\mathbb{MB}$ there exists a flat metric with geodesic boundary which is unique if we fix the length of the boundary. Passing to the orientable cover and pulling back the flat metric from $\mathbb{MB}$ we get a flat cylinder with geodesic boundary. Then the discussion in the previous paragraph implies that the conformal classes on $\mathbb{MB}$ are also encoded by $\mathbb R_{>0}$. Identifying again conformal classes on $\mathbb{MB}$ with points of $\mathbb R_{>0}$ we get two possible cases for a sequence of conformal classes $\{c_n\}$: either $c_n \to 0$ or $c_n\to\infty$. In both cases we say that the sequence $\{c_n\}$ degenerates. The first case corresponds to a pinching geodesic having two points of intersection with boundary. The second case corresponds to the collapsing boundary. 

\medskip

\section{Proof of Theorem \ref{conf&conv}.} \label{proofconf&conv}
{\bf Negative Euler characteristic.} Let $\Sigma$ be a surface with boundary and $c_n\to c_\infty$ a degenerating sequence of conformal classes. Consider the corresponding sequence of metrics $h_n$ of constant Gauss curvature and geodesic boundary. Then as we have noticed in Subsection \ref{modulii_boundary} one can find $s=s_1+s_2+s_3$ pinching geodesics of the following three types: $s_1$ pinching boundary components, $s_2$ pinching geodesics that have two points of intersection with boundary and $s_3$ pinching simple closed geodesics that do not intersect the boundary. 

We introduce the following notations
\begin{itemize}
\item $\gamma^n_i$ for collapsing geodesics, $i=1,\ldots,s$. If we do not indicate the superscript then the symbol $\gamma_i$ stands for the genus;
\item $\mathcal{C}^n_i$ for collars of collapsing geodesics, $i=1,\ldots,s$. Their widths are denoted by $w^n_i$. Moreover, $\mathcal{C}^n_i:=\{(t,\theta)~|~0 \leq t<w^n_i,~0 \leq\theta \leq 2\pi\}$ for $1 \leq i \leq s_1$ and $\mathcal{C}^n_i:=\{(t,\theta)~|~-w^n_i<t<w^n_i,~0 \leq\theta \leq 2\pi\}$ for $s_1+1 \leq i \leq s$ (if the geodesic is one-sided then we consider $\mathcal{C}^n_i:=\{(t,\theta)~|~-w^n_i< t<w^n_i,~0 \leq\theta \leq 2\pi\}/\sim$, where $\sim$ stands for $(t,\theta)\sim(-t,\pi+\theta)$). Note that geodesics correspond to the line $\{t=0\}$, the segments $\{\theta=0\}$ and $\{\theta=2\pi\}$ are identified for $1 \leq i \leq s_1$ and for $s_1+s_2+1 \leq i \leq s$ and they are not identified for $s_1+1 \leq i \leq s_1+s_2$ and correspond to the segments of intersection with the boundary; 
\item for $0<a<w^n_i$, we denote $\mathcal C^n_i(0,a)$ the subset $\{(t, \theta)\,|~0 \leq t \leq a, 0 \leq \theta \leq 2\pi\}\subset {\mathcal{C}}^n_i$  for $1 \leq i \leq s_1$ and for $-w^n_i< a< b< w^n_i$, we denote $\mathcal C^n_i(a,b)$ the subset $\{(t, \theta)\,|~a \leq t \leq b, 0 \leq \theta \leq 2\pi\}\subset {\mathcal{C}}^n_i$ for $s_1+1 \leq i \leq s$;
\item $\Gamma^n_i:=\{(t,\theta)\in\mathcal{C}^n_i~|~\theta=0$ or $\theta=2\pi\}$ for $s_1+1 \leq i \leq s_1+s_2$;
\item for $-w^n_i< a< b< w^n_i$, we set $\Gamma^n_i(a,b):=\{(t,\theta)\in\Gamma^n_i~|~a \leq t \leq b\}$ for $s_1+1 \leq i \leq s_1+s_2$;
\item $\Sigma^n_j$ for the $j-$th connected component of $\Sigma \setminus \cup^s_{i=1} \mathcal{C}^n_i$. We enumerate $\Sigma^n_j$ by $1 \leq j \leq M$ such that $M$ denotes the number of $\Sigma^n_j$ and for all $1 \leq j \leq m$ one has $\Sigma^n_j \cap \partial \Sigma \neq \O$ ;
\item let $\alpha^n = \cup_{i=1}^{s_1+s_2}\{\alpha^n_{i,-},\alpha^n_{i,+}\}$, where $0 \leq\alpha^n_{i,\pm}< w^n_i$. We denote by $\Sigma^n_j(\alpha^n)$ the connected component of
 $$
 \Sigma \setminus \Big(\bigcup^{s_1+s_2}_{i=1}{\mathcal{C}}_i^n(\alpha^n_{i,-},\alpha^n_{i,+}) \cup \bigcup^{s}_{i=s_1+s_2+1} \gamma^n_i\Big)
 $$
which contains $\Sigma_j^n$;
\item for $\alpha^n = \cup_{i=1}^{s_1+s_2}\{\alpha^n_{i,-},\alpha^n_{i,+}\}$, where $0 \leq\alpha^n_{i,\pm}< w^n_i$ we set $I^n_j(\alpha^n)=\Sigma^n_j(\alpha^n)\cap \partial\Sigma$ and $I^n_j=\Sigma^n_j \cap \partial\Sigma$ where $1 \leq j \leq m$;
\item we use the notation $a_n \ll b_n$ for two sequences $\{a_n\}$ and $\{b_n\}$ satisfying $a_n, b_n \to +\infty$  and $\frac{a_n}{b_n} \to 0$ as $n \to \infty$. 
\end{itemize}

\subsection{Inequality $\geq$.} We prove that

\begin{equation}\label{aim}
\begin{split}
\liminf_{n \to \infty} \sigma^*_k (\Sigma, c_n) \geq \max \Big(\sum^{m}_{i=1} \sigma^*_{k_{i}}(\Sigma_{\gamma_{i},l_i},c_\infty)+ \sum_{i=1}^{s_1+s_2}\sigma^*_{r_i}(\mathbb{D}^2) \Big),
\end{split}
\end{equation}

For this aim we consider the domains ${\mathcal{C}}^n_i(0,\alpha_{i,+}^n)$ for $1 \leq i \leq s_1$, ${\mathcal{C}}^n_i(\alpha_{i,-}^n,\alpha_{i,+}^n)$ for $1+s_1 \leq i \leq s_1+s_2$, where  $ w^n_i - \alpha_{i,\pm}^n \ll w^n_i,$ $\alpha_{i,\pm}^n\to\infty$ and the domains $\Sigma_j^n(\alpha^n)$ for $1 \leq j \leq m$. By Lemma~\ref{omega_i} we have
\begin{equation}
\label{aim2}
\begin{split}
&\sigma^*_k(\Sigma, c_n) \geq \max \Big(\sum^{s_1}_{i=1}\sigma^{N*}_{r_{i}}({\mathcal{C}}^n_i(0,\alpha_{i,+}^n), \gamma^n_i, c_n)+\\&+\sum^{s_1+s_2}_{i=1+s_1}\sigma^{N*}_{r_{i}}({\mathcal{C}}^n_i(\alpha_{i,-}^n,\alpha_{i,+}^n),\Gamma^n_i(\alpha_{i,-}^n,\alpha_{i,+}^n), c_n)+\sum^{m}_{j=1}\sigma^{N*}_{k_{j}}(\Sigma_j^n(\alpha^n), I^n_j(\alpha^n), c_n)\Big).
\end{split}
\end{equation}

 For $1 \leq i \leq s_1$ we define the conformal maps $\Psi_i^n\colon ({\mathcal{C}}^n_i(0,\alpha_{i,+}^n), c_n) \to (\mathbb{D}^2, [g_{can}])$ as
$$
\Psi_i^n(t,\theta)= e^{\sqrt{-1}(\theta+\sqrt{-1}t)}.
$$ 
The images of $\Psi_i^n$ are the annuli  $\mathbb D^2 \setminus \mathbb D^2_{e^{-\alpha_{i,+}^n}}$ exhausting $\mathbb D^2$ as $n \to \infty.$ We also note that $\Psi_i^n(\gamma^n_i)=\mathbb S^1$.

For $s_1+1 \leq i \leq s_1+s_2$ we define the conformal maps $\Psi_i^n\colon ({\mathcal{C}}^n_i(\alpha_{i,-}^n,\alpha_{i,+}^n), c_n) \to (\mathbb{D}^2, [g_{can}])$ as
$$
\Psi_i^n(t,\theta)=\tan\Big(\frac{\theta-\pi+\sqrt{-1}t}{4}\Big).
$$
The images of $\Psi_i^n$ that we denote by $\Omega_{i}^n$ exhaust $\mathbb D^2$ as $n \to \infty.$  We also denote the image of $\Gamma^n_i(\alpha_{i,-}^n,\alpha_{i,+}^n)$ by $\partial^S\Omega_{i}^n$. Note that $\partial^S\Omega_{i}^n$ exhaust $\mathbb S^1$ as $n\to\infty$.
  
Finally, we take restrictions of the diffeomorphisms $\Psi_n^{-1}$ given by Proposition~\ref{D-M} to obtain the conformal maps $\check\Psi_j^n\colon({\Sigma}_j^n(\alpha^n),c_n)\to (\Sigma_{\infty},\Psi_n^*c_n)$ where $1 \leq j \leq m$. Let $\check\Omega_{j}^n \subset \Sigma_\infty$ be the the image of $\check\Psi_j^n$ and $\partial^S\check\Omega_{j}^n:=\check\Psi_j^n(I^n_j(\alpha^n))$. The following lemma holds


\begin{lemma}\label{post1}
Let $\Sigma^\infty_j$ be the connected component $\check\Psi^n_j(\Sigma^n_j)\subset \Sigma_\infty$ where $1 \leq j \leq m$. Then the domains $\check\Omega_j^n$ exhaust $\Sigma^\infty_j$ and $\partial^S\check\Omega_{j}^n$ exhaust $\partial\Sigma^\infty_j$. 
\end{lemma}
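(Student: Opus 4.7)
The plan is to combine the convergence $\Psi_n^* h_n \to h_\infty$ in $C^\infty_{\mathrm{loc}}$ (Proposition~\ref{D-M} together with its boundary-aware incarnation from Section~\ref{modulii_boundary}) with explicit collar estimates from Theorem~\ref{Collar theorem}(iv). The geometric picture I would set up first is that $\Sigma^n_j(\alpha^n)$ is obtained from the thick part $\Sigma^n_j$ by attaching, for each adjacent collar, the two outer shells $\{\alpha^n_{i,+}\leq t<w^n_i\}$ and $\{-w^n_i<t\leq -\alpha^n_{i,-}\}$; the hypotheses $\alpha^n_{i,\pm}\to\infty$ and $w^n_i-\alpha^n_{i,\pm}\ll w^n_i$ (which, by the $\ll$-convention, forces both $\alpha^n_{i,\pm}$ and $w^n_i-\alpha^n_{i,\pm}$ to diverge) mean that these shells, transported back into $(\Sigma_\infty,h_\infty)$ by $\Psi_n^{-1}$, cover the complement of arbitrarily small neighborhoods of the cusps.

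Before tackling the exhaustion statements I would record the following quantitative fact using the collar coordinates of Theorem~\ref{Collar theorem}(iv): as $l_{h_n}(\gamma^n_i)\to 0$, the hyperbolic distance from the outer rim $\{t=\pm w^n_i\}$ of the collar to the slice $\{t=\pm\alpha^n_{i,\pm}\}$ tends to $+\infty$. A one-line computation writes this distance as $\log[\sec(lw/(2\pi))+\tan(lw/(2\pi))]-\log[\sec(l\alpha/(2\pi))+\tan(l\alpha/(2\pi))]$ and an elementary asymptotic analysis, using that $w^n_i-\alpha^n_{i,\pm}\to\infty$, confirms divergence. Consequently every hyperbolic ball in $(\Sigma,h_n)$ of fixed radius based in $\Sigma^n_j$ eventually avoids $\mathcal C^n_i(\alpha^n_{i,-},\alpha^n_{i,+})$; and since $\Psi_n$ takes values in $\Sigma_n=\Sigma\setminus\bigcup\gamma^n_i$, such balls automatically avoid the remaining interior pinching geodesics $\gamma^n_i$ with $i>s_1+s_2$ as well.

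For the interior exhaustion, I would fix an arbitrary compact $K\subset \Sigma^\infty_j$. Since $K$ is bounded away from the cusps of $(\Sigma^\infty_j,h_\infty)$, it has finite $h_\infty$-diameter, and the $C^\infty_{\mathrm{loc}}$ convergence $\Psi_n^* h_n\to h_\infty$ implies that $\Psi_n(K)\subset \Sigma_n$ sits in an $h_n$-ball of uniformly bounded radius based in $\Sigma^n_j$ for all $n$ large. Combined with the collar estimate from the previous paragraph, this forces $\Psi_n(K)\subset \Sigma^n_j(\alpha^n)$, so $K\subset\check\Psi^n_j{}^{-1}(\Sigma^n_j(\alpha^n)) \cap \Sigma^\infty_j = \check\Omega^n_j$, yielding the exhaustion of $\Sigma^\infty_j$ by $\check\Omega^n_j$.

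The boundary exhaustion then follows by applying the same argument to an arbitrary compact $L\subset \partial\Sigma^\infty_j$: using that $\Psi_n$ sends boundary to boundary and that $\partial\Sigma\cap \Sigma^n_j(\alpha^n)=I^n_j(\alpha^n)$, one obtains $\Psi_n(L)\subset I^n_j(\alpha^n)$, hence $L\subset \partial^S\check\Omega^n_j$. The main obstacle I anticipate is making the first step rigorous for surfaces with boundary, i.e.\ justifying that the Deligne-Mumford convergence extends in a $C^\infty_{\mathrm{loc}}$ sense across the boundary; one handles this via the Schottky double construction, where the $\iota$-equivariant version of Proposition~\ref{D-M} descends to the quotient. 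Once this boundary-equivariant convergence is granted, the rest of the proof reduces to the routine collar-distance calculation sketched above.
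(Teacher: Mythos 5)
Your argument is correct in outline, but it is a genuinely different route from the paper's. The paper proves Lemma~\ref{post1} in one line: it passes to the Schottky double and invokes the closed-surface exhaustion statement \cite[Lemma 5.1]{karpukhin2019friedlander}, so all the collar geometry is outsourced to that reference; the double is used both for the exhaustion itself and for the boundary statement. You instead use the double only to get the boundary-equivariant version of Proposition~\ref{D-M}, and then prove the exhaustion by hand: the explicit collar metric of Theorem~\ref{Collar theorem}(iv) gives the distance $\log[\sec(lw/2\pi)+\tan(lw/2\pi)]-\log[\sec(l\alpha/2\pi)+\tan(l\alpha/2\pi)]\sim\log\bigl(1+(w-\alpha)/\pi\bigr)$ from the collar rim to the slice $\{t=\alpha\}$, which diverges precisely because $w^n_i-\alpha^n_{i,\pm}\to\infty$, and then $C^\infty_{\mathrm{loc}}$-convergence of $\Psi_n^*h_n$ bounds the $h_n$-size of $\Psi_n(K)$ for a fixed compact $K\subset\Sigma^\infty_j$, forcing $\Psi_n(K)$ to miss the excised central bands and interior geodesics and hence to lie in $\Sigma^n_j(\alpha^n)$; the boundary case follows since $\Psi_n$ preserves the boundary and $\partial\Sigma\cap\Sigma^n_j(\alpha^n)=I^n_j(\alpha^n)$. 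What the paper's approach buys is brevity and the avoidance of exactly the point your write-up glosses over: the assertion that $\Psi_n(K)$ lies in a bounded $h_n$-ball \emph{based in} $\Sigma^n_j$ (equivalently, that the basepoint lands in the component containing the $j$-th thick part, consistently in $n$) is not a direct consequence of $C^\infty_{\mathrm{loc}}$-convergence alone; it needs an anchoring step, e.g.\ a fixed $q_0\in\Sigma^\infty_j$ whose $h_\infty$-injectivity radius $\rho>0$ persists under the convergence, so that $\Psi_n(q_0)$ has $\inj\gtrsim\rho/2$ and is therefore at distance tending to infinity from the thin central bands, combined with the observation that each component of the complement contains exactly one thick component and with the labelling built into the definition of $\Sigma^\infty_j$. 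This is precisely the content delivered for free by the cited closed-case lemma, so you should either add that paragraph or cite it; with that step filled in, your direct proof is complete, and its advantage is that it is self-contained and makes the quantitative collar mechanism behind the exhaustion explicit.
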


\begin{proof}
Passing to the Schottky double of the surface $\Sigma$ we immediately deduce this lemma from \cite[Lemma 5.1]{karpukhin2019friedlander}.
\end{proof}

Further, we apply the conformal transformations to~\eqref{aim2} to get
\begin{equation}\label{aim3}
\begin{split}
&\sigma^*_k(\Sigma, c_n) \geq  \max\Big(\sum^{s_1}_{i=1}\sigma^{N*}_{r_{i}}(\mathbb D^2 \setminus\mathbb D^2_{e^{-\alpha_{i,+}^n}}, \mathbb S^1, [g_{can}])+\\&+\sum^{s_1+s_2}_{i=1+s_1}\sigma^{N*}_{r_{i}}(\Omega_{i}^n, \partial^S\Omega_{i}^n, [g_{can}])+\sum^{m}_{j=1}\sigma^{N*}_{k_{j}}(\check\Omega_j^n, \partial^S\check\Omega_{j}^n, [(\Psi^n)^*h_n])\Big).
\end{split}
\end{equation}
It follows from Corollary~\ref{Neumann cor2} that the first two terms on the right hand side converge to $\sigma_{r_{i}}(\mathbb D^2, [g_{can}])$. To complete the proof we will need the following lemma
\begin{lemma}\label{post2}
Let $\widehat{\Sigma_j^\infty}\subset \widehat{\Sigma_\infty}$ be a closure of $\Sigma_j^\infty$, $1 \leq j \leq m$. Then for all $r$ one has 
$$
\liminf_{n\to\infty}\sigma^{N*}_{r}(\check\Omega_j^n,\partial^S\check\Omega_{j}^n, [(\Psi^n)^*h_n]) \geq \sigma^*_r(\widehat{\Sigma_j^\infty}, [\widehat{h_\infty}]).
$$
\end{lemma}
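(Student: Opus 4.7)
The plan is to combine three ingredients already developed in the paper --- the monotonicity statement of Proposition~\ref{subdomain}, the exhaustion principle of Corollary~\ref{Neumann cor2}, and the metric continuity of Proposition~\ref{N-cont} --- in order to reduce everything to the $C^\infty_{\mathrm{loc}}$-convergence $(\Psi^n)^*h_n\to h_\infty$ on $\Sigma_\infty$ coming from the Deligne--Mumford compactification (Proposition~\ref{D-M}).

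First I would fix $\varepsilon>0$ and enumerate the finite set $P=\{p_1,\ldots,p_l\}:=\widehat{\Sigma_j^\infty}\setminus\Sigma_j^\infty$ of points added to compactify (cusps in the interior and pinched points on the boundary). For each $\delta>0$, let $B_\delta(p_i)$ be the geodesic ball of radius $\delta$ in the smooth metric $\widehat{h_\infty}$, and set
\[
U_\delta:=\widehat{\Sigma_j^\infty}\setminus\bigcup_{i=1}^l B_\delta(p_i),\qquad \partial^S U_\delta:=\partial U_\delta\cap\partial\widehat{\Sigma_j^\infty}.
\]
Corollary~\ref{Neumann cor2} applied to $(\widehat{\Sigma_j^\infty},\widehat{h_\infty})$ with $K_n=\bigcup_iB_{1/n}(p_i)$ gives
\[
\sigma^{N*}_r(U_\delta,\partial^S U_\delta,[\widehat{h_\infty}|_{\overline{U_\delta}}]) \xrightarrow[\delta\to 0]{} \sigma^{*}_r(\widehat{\Sigma_j^\infty},[\widehat{h_\infty}]),
\]
so it is enough to prove that for every fixed $\delta>0$
\[
\liminf_{n\to\infty}\sigma^{N*}_r(\check\Omega_j^n,\partial^S\check\Omega_j^n,[(\Psi^n)^*h_n]) \ \geq\ \sigma^{N*}_r(U_\delta,\partial^S U_\delta,[\widehat{h_\infty}|_{\overline{U_\delta}}]).
\]

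For this I would fix $\delta>0$ and invoke Lemma~\ref{post1}: since $\check\Omega_j^n$ exhausts $\Sigma_j^\infty$ and $\partial^S\check\Omega_j^n$ exhausts $\partial\Sigma_j^\infty$, the compact set $\overline{U_\delta}\subset\Sigma_j^\infty$ lies in $\check\Omega_j^n$ and $\partial^S U_\delta$ lies in $\partial^S\check\Omega_j^n$ for all $n$ large enough. Proposition~\ref{subdomain} in its Neumann-to-Neumann form then yields
\[
\sigma^{N*}_r(\check\Omega_j^n,\partial^S\check\Omega_j^n,[(\Psi^n)^*h_n]) \ \geq\ \sigma^{N*}_r(U_\delta,\partial^S U_\delta,[(\Psi^n)^*h_n|_{\overline{U_\delta}}]).
\]
By Proposition~\ref{D-M}, $(\Psi^n)^*h_n|_{\overline{U_\delta}}\to h_\infty|_{\overline{U_\delta}}$ in $C^\infty(\overline{U_\delta})$, and on $\Sigma_\infty$ the metric $h_\infty$ is conformal to $\widehat{h_\infty}$, so $[h_\infty|_{\overline{U_\delta}}]=[\widehat{h_\infty}|_{\overline{U_\delta}}]$. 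Applying Proposition~\ref{N-cont} on the fixed domain $U_\delta$ with Steklov part $\partial^S U_\delta$ then produces
\[
\lim_{n\to\infty}\sigma^{N*}_r(U_\delta,\partial^S U_\delta,[(\Psi^n)^*h_n|_{\overline{U_\delta}}]) = \sigma^{N*}_r(U_\delta,\partial^S U_\delta,[\widehat{h_\infty}|_{\overline{U_\delta}}]),
\]
and chaining the last three displays and sending $\delta\to 0$ finishes the argument.

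The main technical point to verify is that the cut-off collars $\mathcal C_i^n(\alpha_{i,-}^n,\alpha_{i,+}^n)$ and the collars of the type~3 geodesics removed in the construction of $\check\Omega_j^n$ really do map, under the diffeomorphisms $\Psi_n$ of Proposition~\ref{D-M}, into shrinking $\widehat{h_\infty}$-neighborhoods of the marked points of $\widehat{\Sigma_j^\infty}$. This is precisely the geometric content of Lemma~\ref{post1}; once it is in hand, the three-step reduction above goes through essentially mechanically.
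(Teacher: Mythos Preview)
Your proposal is correct and follows essentially the same route as the paper's proof: both fix an $\varepsilon$ (or $\delta$), use Corollary~\ref{Neumann cor2} to approximate $\sigma^*_r(\widehat{\Sigma_j^\infty},[\widehat{h_\infty}])$ by the Steklov--Neumann invariant of a compact subdomain avoiding the marked points, invoke Lemma~\ref{post1} to ensure this subdomain sits inside $\check\Omega_j^n$ for large $n$, apply Proposition~\ref{subdomain} for monotonicity, and finish with Proposition~\ref{N-cont} via the $C^\infty_{\mathrm{loc}}$-convergence of $(\Psi^n)^*h_n$. Your version is in fact slightly more explicit than the paper's in noting that $[h_\infty|_{\overline{U_\delta}}]=[\widehat{h_\infty}|_{\overline{U_\delta}}]$, a conformal identification the paper uses implicitly.
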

We postpone the proof to Section \ref{appendix3}.

Finally, taking $\liminf_{n\to\infty}$ in~\eqref{aim3} completes the proof of~\eqref{aim}.

\subsection{Inequality $\leq$.} We prove the inverse inequality,
\begin{equation}\label{aim'}
\begin{split}
\limsup_{n \to \infty} \sigma^*_k (\Sigma, c_n) \leq \max \Big(\sum^{m}_{i=1} \sigma^*_{k_{i}}(\Sigma_{\gamma_{i},l_i},c_\infty)+ \sum_{i=1}^{s_1+s_2}\sigma^*_{r_i}(\mathbb{D}^2) \Big).
\end{split}
\end{equation}

For this aim we  choose a subsequence $c_{n_m}$ such that $$\lim_ {n_m \to \infty} \sigma^*_k (\Sigma, c_{n_m})=\limsup_{n \to \infty} \sigma^*_k (\Sigma, c_n).$$ Then we relabel the subsequence and denote it by $\{c_n\}$. Therefore, one can choose subsequences without changing the value of $\limsup$.

{\bf Case 1.} Suppose that up to a choice of a subsequence the following inequality holds
\begin{align*}
\sigma^*_k(\Sigma, c_n) > \sigma^*_{k-1}(\Sigma, c_n) +2\pi.
\end{align*}
Then by \cite[Theorem 2]{petrides2019maximizing} in the conformal class $c_n$ there exists a metric $g_n$ of unit boundary length induced from a harmonic immersion with free boundary $\Phi_n$ to some $N(n)$-dimensional ball $\mathbb{B}^{N(n)}$, i.e.
$$
g_n=\frac{\langle \Phi_n, \partial_{\nu_n}\Phi_n\rangle_{h_n}}{\sigma^*_k(\Sigma, c_n)}h_n
$$
and such that $\sigma_k(g_n)=\sigma^*_k(\Sigma, c_n)$. Here the metric $h_n$ is the canonical representative in the conformal class $c_n$. It is known that for any compact surface the multiplicity of $\sigma_k(g_n)$ is bounded from above by a constant depending only on $k$ and the topology of $\Sigma$ (see for instance~\cite{fraser2012minimal, karpukhin2014multiplicity}). Therefore, one can choose the number $N(n)$ large enough such that $N(n)$ does not depend on $n$. 

Assume that for the sequence $\{c_n\}$ the following inequality holds

\begin{equation}\label{gap}
\begin{split}
\limsup_{n \to \infty} \sigma^*_k (\Sigma, c_n) > \max \Big(\sum^{m}_{i=1} \sigma^*_{k_{i}}(\Sigma_{\gamma_{i},l_i},c_\infty)+ \sum_{i=1}^{s_1+s_2}\sigma^*_{r_i}(\mathbb{D}^2) \Big).
\end{split}
\end{equation}

For $1 \leq i \leq s_1$ we consider the conformal map $\Psi^n_i: (\mathcal C^n_i, c_n) \to (\mathbb D^2,[g_{can}])$ defined as $\Psi^n_i(\theta,t)=e^{\sqrt{-1}(\theta+\sqrt{-1}t)}$. The image of this map is nothing but $\mathbb D^2\setminus \mathbb D^2_{e^{-w^n_i}}$ which exhausts $\mathbb D^2$ as $n\to\infty$. The image of a pinching geodesic is $\mathbb S^1$. Then the map $\Phi^n_i:=\Phi_n\circ(\Psi^n_i)^{-1}: \mathbb D^2\setminus \mathbb D^2_{e^{-w^n_i}}\to \mathbb B^N$ satisfies the \textit{bubble convergence theorem for harmonic maps with free boundary} \cite[Theorem 1]{laurain2017regularity}. Hence, there exist a regular harmonic map with free boundary $\Phi_i: \mathbb D^2 \to \mathbb B^N$ and some harmonic extensions of non-constant $1/2-$harmonic maps $\omega^i_1,\dots,\omega^i_{t_i}: \mathbb D^2 \to \mathbb B^N$ such that 
$$
\int_{\mathbb D^2}|\nabla \Phi_i|^2dv_{g_{can}}+\sum^{t_j}_{j=1}\int_{\mathbb D^2}|\nabla \omega^j_{t_i}|^2dv_{g_{can}}=\lim_{n\to\infty}\int_{\gamma^n_i}ds_{g_n}.
$$
We denote $\lim_{n\to\infty}\int_{\gamma^n_i}ds_{g_n}$ by $m_i$.

\begin{proposition} \label{sequences}
 For $s_1+1 \leq i \leq s_1+s_2$ there exist integers $t_{i} \geq 0$, non-negative sequences $\{a_{i,l}^n\}, \{b_{i,l}^n\}$ with $1 \leq l \leq t_{i}$ and a sequence $\{\alpha^n_i\}$ such that 
\begin{gather*}
-w_i^n \ll \alpha_{i,-}^n=b_{i,0}^n \ll a_{i,1}^n \ll b_{i,1}^n \ll \ldots \ll a_{i,t_i}^n \ll b_{i,t_{i}+1}^n \ll a_{i,t_{i+1}}^n=\alpha_{i,+}^n \ll w_i^n
\end{gather*}
and 
$$
m_{i,l}=\lim_{n \to \infty} L_{g_n}(\Gamma_i^n(a_{i,l}^n,b_{i,l}^n))>0.
$$  
Moreover, there exists a set $J \subset \{1,\ldots,m\}$ such that for every $j \in J$ one has
$$
m_{j}=\lim_{n \to \infty} L_{g_n}(I_j^n(\alpha^n))>0
$$
satisfying 
$$
\sum^{s_1}_{i=1} m_{i}+\sum^{s_1+s_2}_{i=1} \sum^{t_i}_{l=s_1+1}m_{i,l}+\sum_{j\in J}m_j=1,
$$
with $s_1+\sum^{s_1+s_2}_{i=s_1+1}t_i$ is maximal.
\end{proposition}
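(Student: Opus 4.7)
The plan is an iterative bubble-extraction applied to the boundary length measure $ds_{g_n}$ restricted to each long strip $\Gamma_i^n$, combined with the Laurain--Petrides free-boundary bubble convergence invoked just above in Case~1.

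First, I would exploit the normalization $L_{g_n}(\partial\Sigma)=1$. Writing
$$
1=\sum_{i=1}^{s_1}L_{g_n}(\gamma_i^n)+\sum_{i=s_1+1}^{s_1+s_2}L_{g_n}(\Gamma_i^n)+\sum_{j=1}^{m}L_{g_n}(I_j^n),
$$
each summand is bounded, so after passing to a subsequence each converges. In particular, $m_i:=\lim_n L_{g_n}(\gamma_i^n)$ is well-defined for the pinching boundary components $1\le i\le s_1$.

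Second, for each $i$ with $s_1+1\le i\le s_1+s_2$ I would extract bubbles inside $\Gamma_i^n\simeq(-w_i^n,w_i^n)$ by induction on $l$. At each stage, test whether there exists a center $c^n$, far from previously extracted bubbles, such that $L_{g_n}\bigl(\Gamma_i^n(c^n-R_n,c^n+R_n)\bigr)$ remains bounded away from $0$ for some slowly diverging scale $R_n$. If yes, extract the interval $(a_{i,l}^n,b_{i,l}^n):=(c^n-R_n,c^n+R_n)$ and record the limit mass $m_{i,l}>0$; if no, stop and set $t_i:=l-1$. Because each extracted bubble carries at least some fixed amount of mass while the total is bounded by $1$, the procedure terminates in finitely many steps. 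By the stopping criterion the boundary length on each neck $[b_{i,l}^n,a_{i,l+1}^n]$ tends to $0$, and a diagonal extraction lets me choose $\alpha_{i,\pm}^n$ with $w_i^n-|\alpha_{i,\pm}^n|\ll w_i^n$ so that the two outer remainders also have vanishing length.

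Third, define $J=\{j\colon \lim_n L_{g_n}(I_j^n(\alpha^n))>0\}$ and let $m_j$ be the corresponding limit. The mass identity $\sum m_i+\sum_{i,l}m_{i,l}+\sum_{j\in J}m_j=1$ then follows from conservation of the total boundary length, since necks, outer collar remainders and components outside $J$ contribute zero in the limit. Maximality of $s_1+\sum t_i$ is automatic: the inductive procedure stops precisely when no further positive concentration can be extracted, so any alternative decomposition produces no more bubbles.

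The main obstacle is executing Step~2 rigorously, i.e.\ ensuring simultaneously that the intervals satisfy the strict $\ll$ ordering, that the necks lose all their mass in the limit, and that only finitely many bubbles occur. Finiteness is exactly the content of the Laurain--Petrides bubble-tree theorem applied to the rescaled harmonic free-boundary immersions on each stretching strip, after conformally identifying $\mathcal C_i^n$ with an exhausting family of annuli in $\mathbb D^2$; only the countably many bubble points of the limiting bubble tree carry boundary mass, and the total mass is finite. The scale bookkeeping across all collars requires a careful diagonal argument to synchronize the choices of $R_n$ and $\alpha_{i,\pm}^n$.
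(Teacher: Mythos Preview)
Your approach differs substantially from the paper's. The paper does not construct the decomposition directly; instead it argues by contradiction, deferring entirely to Claims~16 and~17 of Petrides. The logic is: if no decomposition with the stated properties exists (in particular, if one could split the boundary mass into more than $k$ pieces separated by long necks of negligible mass), then standard logarithmic cut-offs on those necks produce $k+1$ test functions with disjoint supports and Rayleigh quotients tending to $0$, forcing $\sigma_k(g_n)\to 0$. This contradicts the a priori lower bound $\sigma_k(g_n)=\sigma^*_k(\Sigma,c_n)\ge 2\pi k$ from~\eqref{el soufi}. Thus the finiteness of the $t_i$ and the maximality statement come from the eigenvalue bound, not from any energy quantization for harmonic maps.

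Your direct extraction is a reasonable alternative strategy, but as written it has a gap. In Step~2 you assert that ``each extracted bubble carries at least some fixed amount of mass,'' which is exactly what needs proving; nothing about the boundary length measure alone prevents masses $2^{-l}$. You then patch this with Laurain--Petrides, but that theorem gives an energy gap for the Dirichlet energy of the harmonic map $\Phi_n$, not a gap for the boundary length density $\langle\Phi_n,\partial_\nu\Phi_n\rangle$. To make your route work you must argue that any interval on $\Gamma_i^n$ carrying boundary length bounded away from zero also carries Dirichlet energy bounded away from zero (this is where the free-boundary condition $\partial_\nu\Phi_n\parallel\Phi_n$ and the no-neck identity enter), so that your length bubbles are trapped by the finitely many energy bubbles. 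That link is plausible but is precisely the nontrivial step, and it is not supplied. The paper's contradiction argument sidesteps this entirely by tying finiteness to the index $k$ rather than to harmonic-map analysis.
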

\begin{proof}
The proof follows the proofs of Claim 16, Claim 17 by \cite{petrides2019maximizing}. Precisely, denying the proposition one can construct $k+1$ test-functions such that $\sigma_k(g_n) \leq o(1)$ which contradicts inequality~\eqref{el soufi}. The construction of these functions is given in the proofs of Claim 16, Claim 17 by \cite{petrides2019maximizing}. Note that these functions equal $1$ on $\Sigma^n_j$ for every $m+1 \leq j \leq M$.
\end{proof}

We proceed with considering a sequence $\{d_{i,l}^n\}$ where $s_1+1 \leq i \leq s_1+s_2$ and $1 \leq l \leq t_i$ such that 
$$
\lim_{n \to \infty} L_{g_n}(\Gamma_i^n(a_{i,l}^n,d_{i,l}^n))= \lim_{n \to \infty} L_{g_n}(\Gamma_i^n(d_{i,l}^n,b_{i,l}^n))=m_{i,l}/2.
$$
Let $q^n_{i,l}\ll a^n_{i,l}$, $q^n_{i,l}\to+\infty$. Consider the conformal maps 
$$\Psi_{i,l}^n\colon \left({\mathcal{C}}_i^n(a_{i,l}^n-q^n_{i,l},b_{i,l}^n+ q^n_{i,l}),c_n\right) \to (\mathbb{D}^2,[g_{can}])
$$ defined as
$$
\Psi_{i,l}^n(t,\theta)=\tan\left(\frac{\theta-\pi+\sqrt{-1}(t-t_{i,l}^n)}{4}\right)
$$
Let
$$
D_{i,j}^n=\Psi_{i,l}^n\left({\mathcal{C}}_i^n(a_{i,l}^n-q^n_{i,l},b_{i,l}^n+ q^n_{i,l})\right)
$$
and 
$$
S_{i,j}^n=\Psi_{i,l}^n\left(\Gamma_i^n(a_{i,l}^n-q^n_{i,l},b_{i,l}^n+ q^n_{i,l})\right)
$$
Then $D_{i,j}^n$ exhausts $\mathbb D^2$ and $S_{i,j}^n$ exhausts $\mathbb S^1$ as $n\to \infty$. We also set
$$
\lim_{n\to\infty} L_{(\Psi_{i,l}^n)_*g_n}(S_{i,j}^n)=m_{i,l}.
$$
Consider the map $\Phi_{i,l}^n=\Phi_n \circ (\Psi_{i,l}^n)^{-1}\colon (D_{i,j}^n,S_{i,j}^n) \to (\mathbb B^{N},\mathbb S^{N-1})$. We endow $D_{i,j}^n$ with the metric $(\Psi_{i,l}^n)_*g_n$ and $\mathbb B^{N}$ with the Euclidean metric. Then the map $\Phi_{i,l}^n$ is harmonic with free boundary since $\Phi_n$ is harmonic with free boundary and $\Psi_{i,l}^n$ is conformal. Moreover, it is shown in \cite{petrides2019maximizing} that the measure $\boldsymbol{1}_{S_{i,j}^n}\langle\Phi_{i,l}^n,\partial_\nu\Phi_{i,l}^n\rangle_{g_{can}}ds_{g_{can}}$ does not concentrate at the poles $(0,1)$ and $(0,-1)$ of $\mathbb{D}^2$. Indeed, if the measure concentrated at the poles then one would obtain a contradiction with the maximality of $s_1+\sum^{s_1+s_2}_{i=s_1+1}t_i$.

The exactly same procedure can be carried out for components $\Sigma_j^n(\alpha^n)$, $j\in J$. The only difference is that now we use restrictions of diffeomorphisms $\Psi^n$ given by Proposition~\ref{D-M} instead of the explicit harmonic map as above. As a result, one obtains domains $\check\Omega^n_j\subset \Sigma_\infty$ and harmonic maps with free boundary $\check\Phi^n_j\colon\check\Omega^n_j\to\mathbb{B}^N$ such that the measure $\boldsymbol{1}_{\partial\check\Omega_j^n}\langle\Phi_{i,l}^n,\partial_\nu\Phi_{i,l}^n\rangle_{g_{can}}ds_{g_{can}}$ does not concentrate at the marked points of $\widehat{\Sigma_\infty}$.


Now thanks to inequality \eqref{gap} we can construct $k+1$ well-defined test-functions for the Rayleigh quotient of $\sigma_k$ using the limit functions of the sequences of maps $\hat \Phi_{i,l}^n$ and $\hat \Phi_i^n$ as it was shown in \cite{petrides2019maximizing}. Precisely, let $p_{i}$ be the maximal integers such that
\begin{gather}\label{tilde i}
\frac{\sigma^*_{p_{i}}(\mathbb{D}^2)}{m_{i}}<\limsup_{n\to\infty}\sigma^*_k(\Sigma,c_n),
\end{gather}
where $1 \leq i \leq s_1$, $p_{i,l}$ the maximal integers such that
\begin{align}\label{tilde i,l}
\frac{\sigma^*_{p_{i,l}}(\mathbb{D}^2)}{m_{i,l}}<\limsup_{n\to\infty}\sigma^*_k(\Sigma,c_n),
\end{align}
where $s_1+1 \leq i \leq s_1+s_2$ and $p_j$ the maximal integers such that
\begin{align}\label{j}
\frac{\sigma^*_{p_{j}}(\widehat{\Sigma_j^\infty}, \widehat{c_{\infty}})}{m_{j}}<\limsup_{n\to\infty}\sigma^*_k(\Sigma,c_n),~j\in J.
\end{align}
Then one has
$$
\sigma^*_{p_{i}+1}(\mathbb{D}^2) \geq m_{i}\limsup_{n\to\infty}\sigma^*_k(\Sigma,c_n),~1 \leq i \leq s_1,
$$
$$
\sigma^*_{p_{i,l}+1}(\mathbb{D}^2) \geq m_{i,l}\limsup_{n\to\infty}\sigma^*_k(\Sigma,c_n),~s_1+1 \leq i \leq s_1+s_2
$$and
$$
\sigma^*_{p_{j}+1}(\widehat{\Sigma_j^\infty}, \widehat{c_{\infty}}) \geq m_{j}\limsup_{n\to\infty}\sigma^*_k(\Sigma, c_n),~j\in J.
$$
If $\sum^{s_1}_{i=1} (p_{i}+1)+\sum^{s_1+s_2}_{i=s_1+1} \sum^{t_i}_{l=1} (p_{i,l}+1)+\sum_{j \in J}(p_j+1) \leq k$ then by inequality \eqref{gap} we have
\begin{gather*}
\sum^{s_1}_{i=1} \sigma^*_{p_{i}+1}(\mathbb{D}^2)+\sum^{s_1+s_2}_{i=s_1+1} \sum^{t_i}_{l=1}\sigma^*_{p_{i,l}+1}(\mathbb{D}^2)+ \sum_{j \in J}\sigma^*_{p_{j}+1}(\widehat{\Sigma_j^\infty}, \widehat{c_{\infty}})<\limsup_{n\to\infty}\sigma^*_k(\Sigma,c_n),
\end{gather*}
which implies $\sum^{s_1}_{i=1} m_{i}+\sum^{s_1+s_2}_{i=s_1+1} \sum^{t_i}_{l=1}m_{i,l}+\sum_{j\in J}m_j < 1$ and we arrive at a contradiction with Proposition \ref{sequences}. Hence, $\sum^{s_1}_{i=1} (p_{i}+1)+\sum^{s_1+s_2}_{i=s_1+1} \sum^{t_i}_{l=1} (p_{i,l}+1)+\sum_{j \in J}(p_j+1) \geq k+1$. 

Further, let $dv_{g^{i}_\infty}=\lim_{n \to \infty} (\Psi_{i}^n)_*dv_{g_n}$, $dv_{g^{i,l}_\infty}=\lim_{n \to \infty} (\Psi_{i,l}^n)_*dv_{g_n}$ and $dv_{g^j_\infty}=\lim_{n \to \infty} (\Psi_j^n)^*dv_{g_n}$. Denote by $\widehat{dv_{g^{i}_\infty}}$, $\widehat{dv_{g^{i,l}_\infty}}$ and $\widehat{dv_{g^j_\infty}}$ the measures induced by the compactification  on $\mathbb{D}^2$ for $1 \leq i \leq s_1$ and $s_1+1 \leq i \leq s_1+s_2$ and on $\widehat{\Sigma_j^\infty}$ respectively. These measures are well-defined due to the non-concentration argument explained above. Take orthonormal families of eigenfucntions $(\phi^0_i,\ldots,\phi^{p_{i}}_i)$ in $L^2(\mathbb{D}^2, \widehat{dv_{g^{i}_\infty}})$ $1 \leq i \leq s_1$, $(\phi^0_i,\ldots,\phi^{p_{i,l}}_i)$ in $L^2(\mathbb{D}^2, \widehat{dv_{g^{i,l}_\infty}})$ $s_1+1 \leq i \leq s_1+s_2$ and $(\psi^0_j,\ldots,\psi^{p_{j}}_j)$ in $L^2(\widehat{\Sigma_j^\infty}, \widehat{dv_{g^j_\infty}})$ such that for $0 \leq e \leq p_{i}$ the function $\phi^e_i$ is an eigenfunction with eigenvalue $\sigma_e(\widehat{dv_{g^{i}_\infty}})$ on $\mathbb{D}^2$, for $0 \leq e \leq p_{i,l}$ the function $\phi^e_i$ is an eigenfunction with eigenvalue $\sigma_e(\widehat{dv_{g^{i,l}_\infty}})$ on $\mathbb{D}^2$ and for $0 \leq r \leq p_{j}$ the function $\psi^r_j$ is an eigenfunction with eigenvalue $\sigma_r(\widehat{dv_{g^j_\infty}})$ on $\widehat{\Sigma_j^\infty}$. The standard capacity computations (see for instance \cite[Claim 1]{petrides2019maximizing}) imply the existence of smooth functions supported in a geodesic ball of a Riemannian manifold and having bounded Dirichlet energy. More precisely there exist positive smooth functions $\eta_i$, $\eta_{i,l}$ and $\eta_j$ for $(\mathbb{D}^2, \widehat{dv_{g^{i}_\infty}})$, $(\mathbb{D}^2, \widehat{dv_{g^{i,l}_\infty}})$ and $(\widehat{\Sigma_j^\infty}, \widehat{dv_{g^j_\infty}})$ respectively supported in geodesic balls $B(x,r)$ centered at the compactification points $x$ of radius $r$ such that $\eta \in C^\infty_0(B(x,r))$ and $\eta =1$ on $B(x,\rho_n r)\subset B(x,r)$ where $\rho_n\to 0$ as $n\to\infty$ and $\int_\Omega|\nabla \eta|^2_gdv_g \leq \frac{C}{\log\frac{1}{\rho_n}},$ where $\eta$ is one of the functions $\eta_i$, $\eta_{i,l}$ and $\eta_j$, $(\Omega,dv_g)$ is one of the corresponding manifolds $(\mathbb{D}^2, \widehat{dv_{g^{i}_\infty}})$, $(\mathbb{D}^2, \widehat{dv_{g^{i,l}_\infty}})$ and $(\widehat{\Sigma_j^\infty}, \widehat{dv_{g^j_\infty}})$. Moreover, if $(\Omega,dv_g)=(\mathbb{D}^2, \widehat{dv_{g^{i,l}_\infty}})$ then we additionally require $\rho_n$ to satisfy $\partial D^n_{i,l}\setminus S^n_{i,l} \subset B(x,\rho_n r)$. 
 Then we define the desired test-functions as
$$
\xi^e_i=(\Psi_{i}^n)^{-1}\eta_i\phi^e_i, ~1 \leq i \leq s_1
$$
extended by 0 on $\Sigma$,
$$
\xi^e_{i,l}=(\Psi_{i,l}^n)^{-1}\eta_{i,l}\phi^e_i, ~s_1+1 \leq i \leq s_1+s_2
$$
extended by 0 on $\Sigma$ and
$$
\xi^r_j=\Psi_j^n\eta_j\psi^r_j,~j\in J
$$
extended by 0 on $\Sigma$. Note that all these functions have pairwise disjoint supports. Then from the variational characterization of $\sigma_k(g_n)$ one gets
\begin{gather*}
\sigma_k(g_n) \leq \max \Big\{\max_{1 \leq i \leq s_1} \frac{\int_{\Sigma}|\nabla \xi^e_i|^2_{g_n}dv_{g_n}}{\int_{\partial \Sigma} (\xi^e_i)^2 ds_{g_n}}, \max_{s_1+1 \leq i \leq s_1+s_2} \frac{\int_{\Sigma}|\nabla \xi^e_{i,l}|^2_{g_n}dv_{g_n}}{\int_{\partial \Sigma} (\xi^e_{i,l})^2 ds_{g_n}},\\ \max_{j \in J} \frac{\int_{\Sigma}|\nabla \xi^r_j|^2_{g_n}dv_{g_n}}{\int_{\partial\Sigma} (\xi^r_j)^2 ds_{g_n}} \Big\},
\end{gather*}
and passing to $\limsup$ as $n\to\infty$ we get
\begin{gather*}
\limsup_{n\to\infty}\sigma^*_k(\Sigma,c_n) \leq \max\Big\{\max_{1 \leq i \leq s_1} \frac{\sigma^*_{p_{i}}(\mathbb{D}^2)}{m_{i}}, \max_{s_1+1 \leq i \leq s_1+s_2} \frac{\sigma^*_{p_{i,l}}(\mathbb{D}^2)}{m_{i,l}},\\ \max_{j \in J}\frac{\sigma^*_{p_{j}}(\widehat{\Sigma_j^\infty}, \widehat{c_{\infty}})}{m_{j}}\Big\}
\end{gather*}
which contradicts \eqref{tilde i}, \eqref{tilde i,l} and \eqref{j}. This means that if inequality \eqref{gap} holds then the sequence $\{c_n\}$ cannot degenerate. We arrived at a contradiction and inequality~\eqref{aim'} is proved. 

\begin{remark}\label{useful}
Note that if $s_2=0$, i.e. there are no pinching geodesics having intersection with boundary components, then we take the set $J$ as $J=\{1,\ldots,m\}$, i.e. we consider $\Sigma^n_j(\alpha^n)$ where $1 \leq j \leq m$. If all the boundary components are getting pinched then we set $J=\O$ and we only have deal with the functions $
\xi^e_i=(\Psi_{i}^n)^{-1}\eta_i\phi^e_i$ extended by 0 on $\Sigma$ and $\sigma^*_{p_{i}}(\mathbb{D}^2)$ where $1 \leq i \leq s_1$. If $s_1=s_2=0$, i.e. only geodesics of the third type are getting pinched then we only have deal with functions $\xi^r_j=\Psi_j^n\eta_j\psi^r_j,~j\in J$ extended by 0 on $\Sigma$ and $\sigma^*_{p_{j}}(\widehat{\Sigma_j^\infty}, \widehat{c_{\infty}})$ where $J=\{1,\ldots,m\}$. 
\end{remark}

{\bf Case 2.} Assume that up to a choice of  a subsequence the following inequality holds
\begin{align*}
\sigma^*_k(\Sigma, c_n) \leq \sigma^*_{k-1}(\Sigma, c_n) +2\pi
\end{align*}
then we prove inequality \eqref{aim'} by induction.

Consider the case $k=1$ then by inequality~ \eqref{el soufi} $\sigma^*_1(\Sigma, c_n) \geq 2\pi$. Suppose that up to a choice of a subsequence one has $\sigma^*_1(\Sigma, c_n) > 2\pi$. Then the case $k=1$ falls under Case 1. Otherwise one has $\limsup_{n \to \infty} \sigma^*_1(\Sigma, c_n)=2\pi$ and the inequality ~\eqref{aim'} reads as
\begin{gather*}
2\pi=\limsup_{n \to \infty} \sigma^*_1(\Sigma, c_n) \leq \max \{\sigma^*_{1}(\Sigma_{\gamma_{i},l_i},c_\infty);2\pi\},
\end{gather*}
which is true. The base of induction is proved.

Suppose that the inequality holds for all numbers $k'\leq k$. We show that it also holds for $k+1$. Indeed, one has
\begin{align*}
\sigma^*_{k+1}(\Sigma, c_n) \leq \sigma^*_{k}(\Sigma, c_n)+2\pi=\sigma^*_{k}(\Sigma, c_n)+\sigma^*_1(\mathbb{D}^2)
\end{align*}
and we get 

\begin{gather*}
\limsup_{n \to \infty}\sigma^*_{k+1} (\Sigma, c_n) \leq \max \Big(\sum^{m}_{i=1} \sigma^*_{k_{i}}(\Sigma_{\gamma_{i},l_i},c_\infty)+ \sum_{i=1}^{s_1+s_2}\sigma^*_{r_i}(\mathbb{D}^2) \Big)+\sigma^*_1(\mathbb{D}^2) \leq \\ \leq \max \Big(\sum^{m}_{i=1} \sigma^*_{k_{i}}(\Sigma_{\gamma_{i},l_i},c_\infty)+ \sum_{i=1}^{s_1+s_2}\sigma^*_{r_i}(\mathbb{D}^2) \Big),
\end{gather*}
where the maximum is taken over all possible combinations of indices such that
 $$
 \sum_{i=1}^{m} k_i + \sum_{i=1}^{s_1+s_2} r_i = k+1,
 $$
 since the term $\sigma^*_1(\mathbb{D}^2)$ can be absorbed by one of the terms inside $\max$ using inequality~\eqref{petya}. The proof is complete. 
 
{\bf Zero Euler characteristic.} The case of the cylinder was essentially considered in~\cite[Section 7.1]{petrides2019maximizing}. Indeed, it was proved that if the sequence of conformal classes $\{c_n\}$ degenerates then
$$
\lim_{n\to\infty}\sigma^*_k(\mathcal C,c_n) \leq \max_{i_1+\cdots+i_s=k}\sum^s_{q=1}\sigma^*_{i_q}(\mathbb D^2)=2\pi k.
$$
Applying then inequality~\eqref{el soufi} one immediately gets that $\lim_{n\to\infty}\sigma^*_k(\mathcal C,c_n)=2\pi k$.

Consider the case of the M\"obius band. If the sequence $\{c_n\}$ goes to $0$ then it follows from~\cite[Section 7.1]{petrides2019maximizing} that
\begin{gather}\label{MBcase}
\lim_{n\to\infty}\sigma^*_k(\mathbb{MB},c_n) \leq \max_{i_1+\cdots+i_s=k}\sum^s_{q=1}\sigma^*_{i_q}(\mathbb D^2)=2\pi k.
\end{gather}
Indeed, we pass to the orientable cover which is a cylinder. Then inequality~\eqref{MBcase} follows from~\cite[Section 7.1, the case $R_\alpha \to 1$ as $\alpha \to+\infty$ in Petrides' notations]{petrides2019maximizing}.

If the sequence $\{c_n\}$ goes to $\infty$ then we prove that inequality~\eqref{MBcase} also holds. The proof follows the exactly same arguments as in the proof of inequality~\eqref{aim'}. The analog of the case 1 for $\mathbb{MB}$ corresponds to the case of pinching boundary (see Remark~\eqref{useful}).

Therefore, in both cases inequality~\eqref{MBcase} holds. Applying inequality~ \eqref{el soufi} once again we then get that $\lim_{n\to\infty}\sigma^*_k(\mathbb{MB},c_n)=2\pi k$.

\medskip

\section{Proof of Theorem \ref{disproof}}
\label{main theorem proof}
For the proof of Theorem \ref{disproof} we will need to choose a "nice" degenerating sequence of conformal classes, i.e. a degenerating sequence of conformal classes such that the limiting space looks as simple as possible. 

\begin{lemma}\label{metric sequences}
Let $\Sigma$ be a compact surface with boundary of negative Euler characteristic. Then there exists a degenerating sequence of conformal classes such that the limiting space is the disc.



\end{lemma}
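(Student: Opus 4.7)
I plan to exhibit a sequence of conformal classes $\{c_n\}$ on $\Sigma$ whose corresponding metrics $h_n$ of constant Gauss curvature and geodesic boundary degenerate in such a way that, in the notation of Theorem~\ref{conf&conv}, $\widehat{\Sigma_\infty}$ has exactly one connected component with non-empty boundary and that component is homeomorphic to $\mathbb D^2$. The strategy will be to pinch a collection of disjoint simple closed curves that cut $\Sigma$ into pairs of pants so that exactly one pants $P$ retains an unpinched boundary component of $\Sigma$; collapsing the other two cuffs of $P$ to interior marked points then produces a disc, while every other pants becomes a closed triply-marked sphere (which is irrelevant for Theorem~\ref{conf&conv}).

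\textbf{Construction.} Fix one boundary component $\partial_1$ of $\Sigma$. For orientable $\Sigma$, choose a pants decomposition of $\Sigma$ and let $P$ be the pair of pants containing $\partial_1$ as one of its cuffs; denote its remaining cuffs by $\alpha$ and $\beta$ (each is either an interior simple closed curve of $\Sigma$ or another component of $\partial\Sigma$). Let $\mathcal C$ be the collection of all curves of the pants decomposition (interior and boundary alike) except $\partial_1$. Using Fenchel--Nielsen coordinates with boundary lengths as free parameters, I would construct a sequence of hyperbolic metrics $h_n$ on $\Sigma$ with geodesic boundary such that $\ell_{h_n}(\partial_1)$ stays bounded away from $0$ while $\ell_{h_n}(c)\to 0$ for every $c\in\mathcal C$, and then set $c_n := [h_n]$. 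For non-orientable $\Sigma$, I would perform the analogous construction on the orientable cover $\widetilde\Sigma$ with a $\tau$-equivariant pants decomposition containing a preimage of $\partial_1$, using $\tau$-invariant length parameters so that the resulting metrics descend to $\Sigma$.

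\textbf{Limit analysis and main obstacle.} By construction $\inj(\Sigma,h_n)\to 0$, so $\{c_n\}$ degenerates in the sense of Section~\ref{geometry} with pinching family $\mathcal C$. Every pants $Q\neq P$ has all three cuffs in $\mathcal C$ and so contributes to $\widehat{\Sigma_\infty}$ a closed sphere with three interior marked points---a component with $l_i=0$, which is discarded by the convention of Theorem~\ref{conf&conv}. The pants $P$ retains $\partial_1$ as its only surviving cuff and receives two interior marked points from the pinched $\alpha,\beta$, yielding a component homeomorphic to the disc. Hence $\widehat{\Sigma_\infty}$ will have exactly one boundary-carrying component, and it will be a disc. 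The main technical point is the existence of a pants decomposition (together with, in the non-orientable case, a $\tau$-equivariant one on $\widetilde\Sigma$) affording enough Fenchel--Nielsen freedom to pinch $\mathcal C$ while keeping $\ell(\partial_1)$ fixed---standard hyperbolic geometry for surfaces with boundary. The very low-complexity case when $\Sigma$ is itself a pair of pants is handled directly by pinching two of the three boundary components, which reduces immediately to the disc.
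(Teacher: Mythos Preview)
Your approach is correct and close in spirit to the paper's, but the two differ in the collection of curves pinched. The paper exhibits, via figures, a \emph{minimal} family---essentially one non-separating curve per handle together with all but one of the boundary components (and, in the non-orientable case, a $\tau$-symmetric version on the orientable cover). Pinching only these keeps the limiting space connected, so $\widehat{\Sigma_\infty}$ is literally a single disc, matching the lemma as stated. Your full-pants-decomposition approach pinches more curves, so $\widehat{\Sigma_\infty}$ breaks into one disc plus several thrice-marked spheres. You correctly observe that only the boundary-carrying component enters the formula of Theorem~\ref{conf&conv}, so your weaker conclusion still suffices for the application to Theorem~\ref{disproof}; but strictly speaking you are proving a slight relaxation of the lemma.

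Two minor points. First, your claim that pinching a boundary cuff $\alpha=\partial_j$ of $P$ produces an \emph{interior} marked point deserves a word of justification (e.g.\ via the Schottky double, where $\partial_j$ is an $\iota$-invariant closed geodesic whose two cusp points are swapped by $\iota$). Second, in the non-orientable case you invoke a $\tau$-equivariant pants decomposition of the orientable cover; this exists because $\tau$ acts freely, but it is a nontrivial ingredient you should cite or argue, whereas the paper sidesteps it by simply drawing explicit $\tau$-symmetric curves.
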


\begin{proof}

The proof is purely topological. 

Assume that $\Sigma$ is orientable. Then we consider collapsing geodesics shown in Figure~\ref{F1}. Passing to the limit when the lengths of all pinching geodesics tend to zero and using the one-point cusps compactification we get an orientable surface of genus 0 with one boundary component, i.e. the disc. 


\begin{figure}[h!]
  \centering
  \def\svgwidth{\columnwidth}
  \includegraphics[scale=0.5]{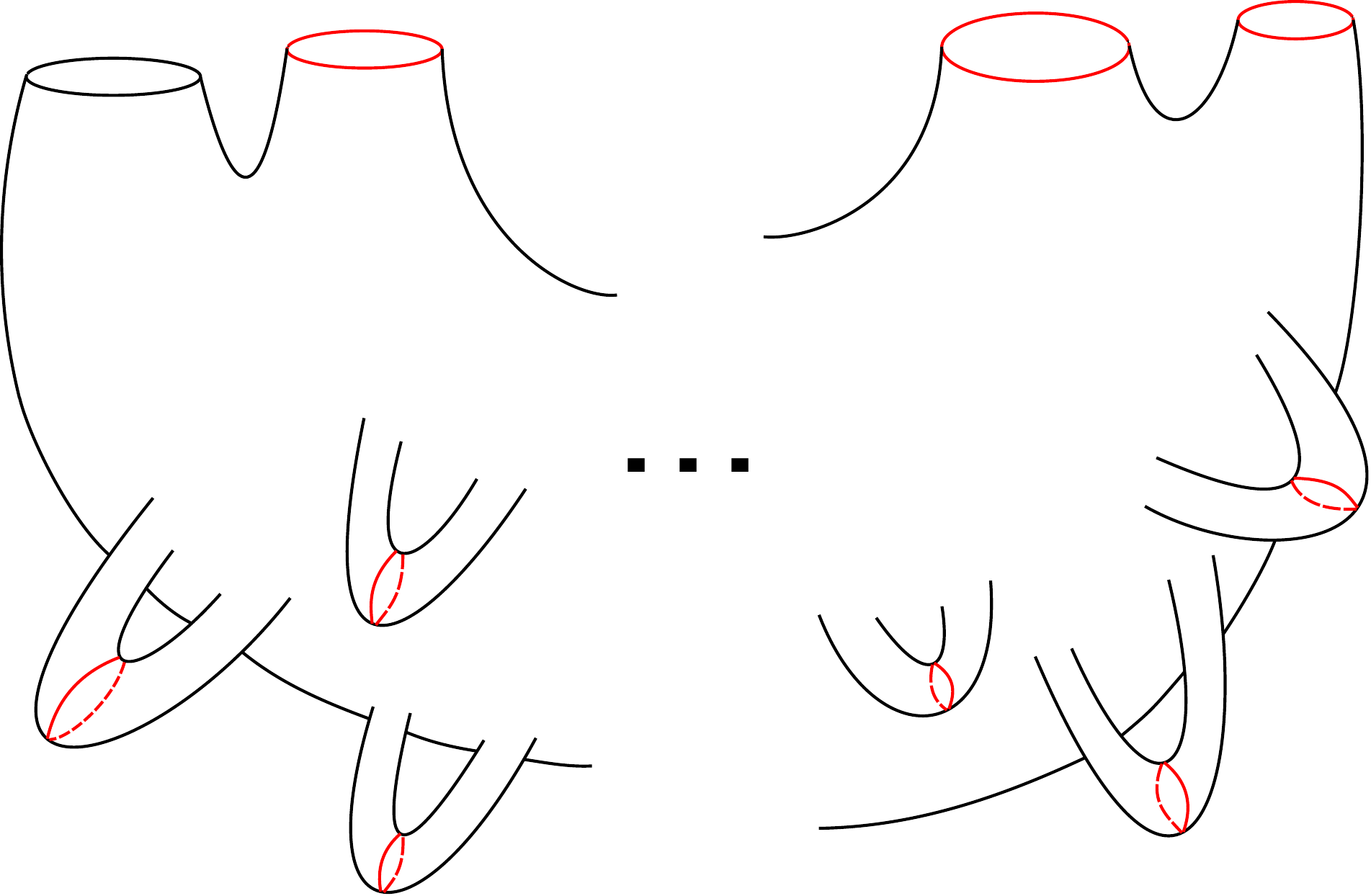}
    \footnotesize
    \caption{
    Orientable surface with boundary. The lengths of all \textit{red} geodesics tend to zero.
    }
     \label{F1}
\end{figure}

If $\Sigma$ is non-orientable then we pass to its orientable cover and we consider collapsing geodesics shown in Figure~\ref{F2} for genus $0$ and Figure~\ref{F3} for genus $\neq 0$ (the pictures are symmetric with respect to the involution changing the orientation, "the antipodal map"). Passing to the limit when the lengths of all pinching geodesics tend to zero and using the one-point cusps compactification we get a disconnected surface with two connected components which are topologically discs. The involution changing the orientation maps one component to another one and hence passing to the quotient by this involution we get just one disc. 


\begin{figure}[h!]
  \centering
  \def\svgwidth{\columnwidth}
  \includegraphics[scale=0.9]{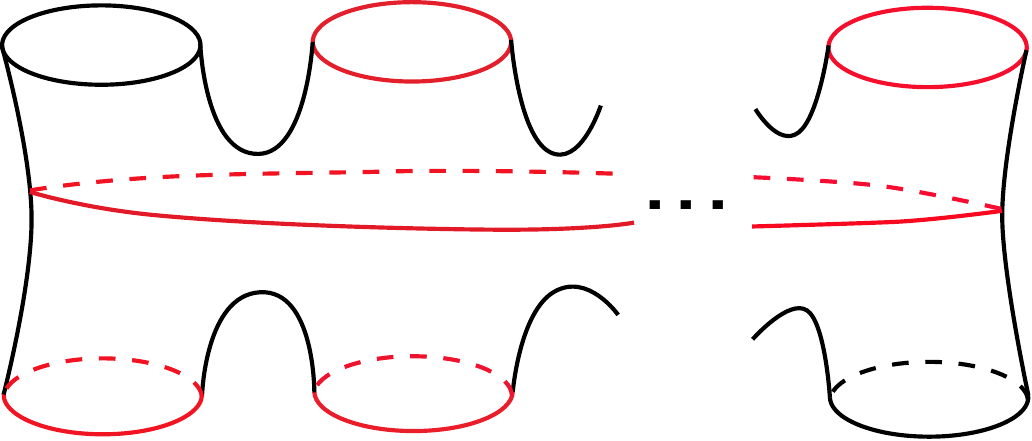}
    \footnotesize
    \caption{
    Orientable cover of a non-orientable surface of genus $0$ with boundary. The lengths of all \textit{red} geodesics tend to zero. 
    }
    \label{F2}
\end{figure}

\begin{figure}[h!]
  \centering
  \def\svgwidth{\columnwidth}
  \includegraphics[scale=0.8]{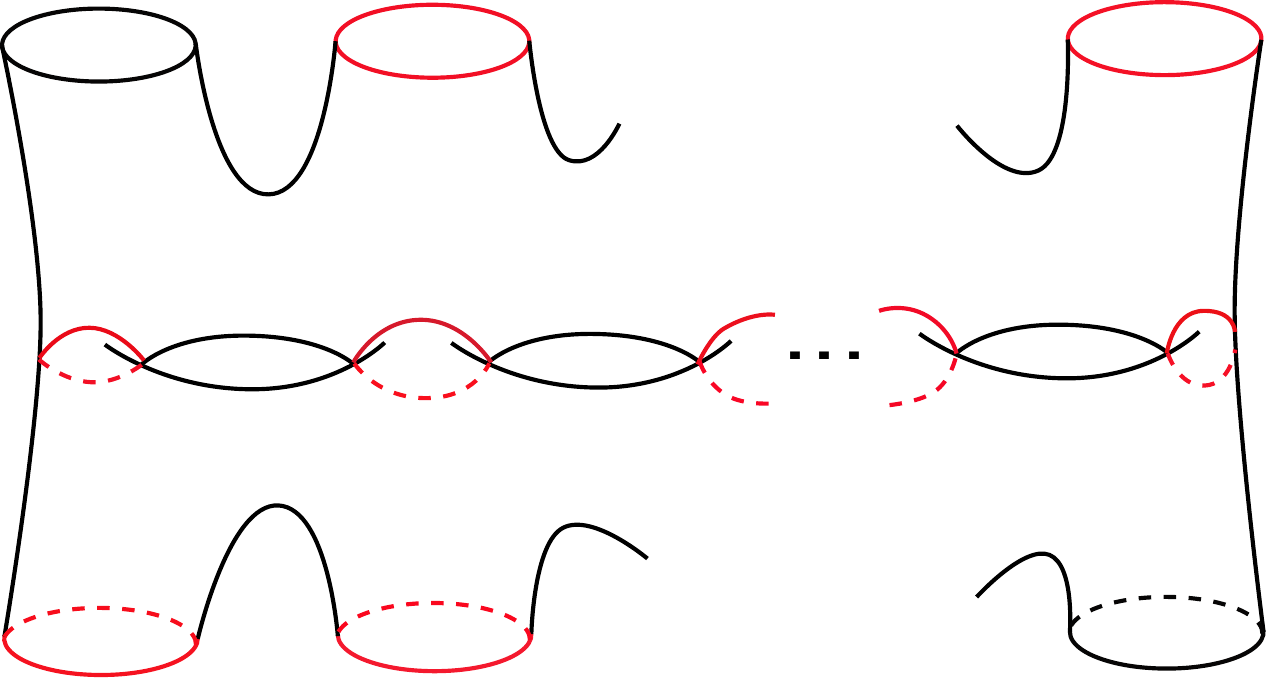}
    \footnotesize
    \caption{
     Orientable cover of a non-orientable surface of genus $\neq 0$ with boundary. The lengths of all \textit{red} geodesics tend to zero. 
      }
      \label{F3}
\end{figure}



\end{proof}


Now we are ready to prove Theorem \ref{disproof}.

\medskip

{\bf Zero Euler characteristic.} Let $\Sigma$ be either the cylinder $\mathcal C$ or the M\"obius band $\mathbb{MB}$.  Then this case immediately follows from Theorem~\ref{conf&conv} by Remark~\ref{main_remark}. Indeed, if $\{c_n\}$ denotes a degenerating sequence of conformal classes on $\Sigma$ then by Theorem~\ref{conf&conv}:
$$
I^\sigma_k(\Sigma) \leq \lim_{n\to\infty}\sigma^*_k(\Sigma,c_n)=2\pi k.
$$
But $I^\sigma_k(\Sigma) \geq 2\pi k$ by \eqref{el soufi}. Thus $I^\sigma_k(\Sigma)=\lim_{n\to\infty}\sigma^*_k(\Sigma,c_n)=2\pi k$ and the degenerating sequence $\{c_n\}$ is minimizing. 

\medskip

{\bf Negative Euler characteristic.}
By Lemma~\ref{metric sequences} there exists a sequence of conformal classes $\{c_n\}$ such that the limiting space $\widehat{\Sigma_\infty}$ is the disc. Then by Theorem~\ref{conf&conv} we have
$$
\lim_{n\to\infty}\sigma^*_k(\Sigma,c_n) = \max_{\sum k_j=k}\sum\sigma^*_{k_j}(\mathbb{D}^2).
$$
Moreover, we know that $\sigma^*_k(\mathbb{D}^2) = 2\pi k$. Hence,
$$
I^\sigma_k(\Sigma) \leq \lim_{n\to\infty}\sigma^*_k(\Sigma,c_n) = 2\pi k.
$$
Finally, by~\eqref{el soufi} one has $I^\sigma_k(\Sigma) \geq 2\pi k$ whence $I^\sigma_k(\Sigma)=2\pi k$ which completes the proof.

\section{Appendix} 
\label{appendix}

\subsection{A well-posed problem.} \label{appendix1}

In this section we consider the problem
\begin{gather}
\label{toprove}
\begin{cases}
\Delta u=0&\text{in $M$},\\
u=g&\text{on $D$},\\
\frac{\partial u}{\partial n}=0&\text{on $N$},
\end{cases}
\end{gather}
where $(M,h)$ is a Riemannian manifold with boundary such that $\overline D\cup \overline N=\partial M$ and $D$ has positive capacity.

Let $G$ be a smooth function such that $G_{|_D}=g$ and consider the function $v=G-u$. Then substituting $u=G-v$ into \eqref{toprove} implies: 
\begin{gather}
\label{toprove1}
\begin{cases}
\Delta v=\Delta G&\text{in $M$},\\
v=0&\text{on $D$},\\
\frac{\partial u}{\partial n}=\frac{\partial G}{\partial n}&\text{on $N$}.
\end{cases}
\end{gather}

We introduce the space $H^1_D(M,h)$ as the closure in $H^1-$norm of $C^\infty-$functions vanishing on $D$. For a function $u \in H^1_D(M,h)$ we have the following coercivity inequality:
\begin{gather}\label{coercivity}
||u||_{L^2(M,h)} \leq C||\nabla u||_{L^2(M,h)},
\end{gather}
with the best constant $C=\frac{1}{\sqrt{\lambda^{DN}_1(M,h)}}$, where $\lambda^{DN}_1(M,h)$ is the first non zero eigenvalue of the mixed problem
\begin{gather}
\begin{cases}
\Delta u=\lambda u&\text{in $M$},\\
u=0&\text{on $D$},\\
\frac{\partial u}{\partial n}=0&\text{on $N$}.
\end{cases}
\end{gather}
By the Lax-Milgram theorem and by virtue of the inequality \eqref{coercivity} the problem \eqref{toprove1} admits a unique solution on the space $H^1_D(M,h)$. Thus, problem \eqref{toprove} also has a solution. Moreover, it is easy to see that this solution is unique. 

Our aim now is the following lemma.

\begin{lemma} \label{finally}
Let $u$ satisfy the problem \eqref{toprove}.
Then one has
\begin{gather*}
||u||_{H^1(M,h)} \leq C||g||_{H^{1/2}(D,h)}.
\end{gather*}
\end{lemma}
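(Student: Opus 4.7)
My plan is to reduce the estimate for $u$ to an estimate for $v = G - u$, as in the set-up preceding the statement, and then to combine the variational characterization of $v$ with the coercivity inequality~\eqref{coercivity}.

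First I would invoke a bounded right inverse of the trace operator on $D$ to choose the auxiliary smooth (or $H^1$) function $G$ so that $G|_D = g$ and
\[
\|G\|_{H^1(M,h)} \leq C \|g\|_{H^{1/2}(D,h)}.
\]
Such a bounded extension exists by the standard trace theorem on Lipschitz domains applied to $D$; the hypothesis that $D$ has positive capacity (and is relatively open in $\partial M$) ensures both this step and the validity of~\eqref{coercivity} with a finite constant, i.e.\ $\lambda^{DN}_1(M,h) > 0$.

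Next, by construction $v = G - u$ lies in $H^1_D(M,h)$ and the weak formulation of problem~\eqref{toprove1} reads
\[
\int_M \langle \nabla v, \nabla \varphi \rangle_h \, dv_h = \int_M \langle \nabla G, \nabla \varphi \rangle_h \, dv_h \qquad \forall \, \varphi \in H^1_D(M,h),
\]
which I would derive by testing the harmonicity of $u$ against $\varphi$, integrating by parts, and using the Neumann condition $\partial u / \partial n = 0$ on $N$ together with $\varphi|_D = 0$. Testing with $\varphi = v$ and applying Cauchy--Schwarz yields $\|\nabla v\|_{L^2(M,h)} \leq \|\nabla G\|_{L^2(M,h)}$. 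Then~\eqref{coercivity} gives $\|v\|_{L^2(M,h)} \leq C \|\nabla v\|_{L^2(M,h)}$, so $\|v\|_{H^1(M,h)} \leq C \|G\|_{H^1(M,h)}$.

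Assembling the pieces via the triangle inequality,
\[
\|u\|_{H^1(M,h)} \leq \|G\|_{H^1(M,h)} + \|v\|_{H^1(M,h)} \leq C \|G\|_{H^1(M,h)} \leq C \|g\|_{H^{1/2}(D,h)},
\]
which is the claimed bound. The proof is essentially a routine application of the Lax--Milgram framework already deployed in Section~\ref{appendix1}; the only place where one has to be slightly careful is in justifying the weak formulation with mixed boundary data, and in checking that the coercivity constant remains finite, for which the positive-capacity assumption on $D$ is essential.
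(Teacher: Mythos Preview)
Your proof is correct and follows essentially the same route as the paper: both use the weak formulation of~\eqref{toprove}, test against $u-G\in H^1_D(M,h)$ to control the gradient, and invoke the coercivity inequality~\eqref{coercivity} to upgrade to an $H^1$ bound in terms of $\|G\|_{H^1}$. The only cosmetic difference is that you fix a bounded extension $G$ with $\|G\|_{H^1}\leq C\|g\|_{H^{1/2}(D)}$ at the outset, whereas the paper proves $\|u\|_{H^1}\leq C\|G\|_{H^1}$ for \emph{arbitrary} extensions $G$ and then appeals to a separate sub-lemma establishing that $\inf_{G|_D=g}\|G\|_{H^1}$ is equivalent to $\|g\|_{H^{1/2}(D)}$; your ordering is slightly more direct.
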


\begin{proof}
The weak formulation of \eqref{toprove} reads
$$
\int_M\langle\nabla u, \nabla v\rangle dv_h=0,~\forall v\in H^1_D(M,h).
$$
Let $G$ be any continuation of the function $g$ into $M$, i.e. $G\in H^1(M,h)$ is any function such that $G_{|_D}=g$. Then substituting $v=u-G$ in the previous identity yields 
\begin{gather*}
0=\int_M\langle\nabla u, \nabla u-\nabla G\rangle dv_h=\int_M|\nabla u|^2dv_h-\int_M\langle\nabla u, \nabla G\rangle dv_h,
\end{gather*}
whence 
\begin{gather}
\label{I1}
\int_M|\nabla u|^2dv_h=\int_M\langle\nabla u, \nabla G\rangle dv_h \leq \frac{1}{2}\int_M|\nabla u|^2dv_h+\frac{1}{2}\int_M|\nabla G|^2dv_h.
\end{gather}
Further, it is easy to see that
$$||u||_{L^2(M,h)} \leq ||u-G||_{L^2(M,h)}+||G||_{L^2(M,h)}.$$
Moreover, since $u-G\in H^1_D(M,h)$ one has
$$
||u-G||_{L^2(M,h)} \leq C||\nabla u -\nabla G||_{L^2(M,h)} \leq C(||\nabla u||_{L^2(M,h)}+||\nabla G||_{L^2(M,h)}).
$$
Substituting it in the previous inequality we get
\begin{gather}
\label{I2}
||u||_{L^2(M,h)} \leq C(||\nabla u||_{L^2(M,h)}+||\nabla G||_{L^2(M,h)})+||G||_{L^2(M,h)}.
\end{gather}
Plugging \eqref{I1} in \eqref{I2} yields 
\begin{gather}
\label{I3}
||u||_{L^2(M,h)} \leq C||G||_{H^1(M,h)}.
\end{gather}
Finally \eqref{I1} and \eqref{I3} imply
\begin{gather}
\label{l4}
||u||_{H^1(M,h)} \leq C||G||_{H^1(M,h)}
\end{gather}
for any function $G\in H^1(M,h)$ such that $G_{|_D}=g$. 
\begin{lemma}\label{inf}
The norms $$\inf_{G\in H^1(M,h),~G_{|_D}=g}||G||_{H^1(M,h)}~\text{and}~||g||_{H^{1/2}(D,h)}$$ are equivalent.
\end{lemma}
\begin{proof}
By the trace inequality there exists a positive constant $C_1$ such that for every $G\in H^1(M,h)$ one has
\begin{gather*}
||g||_{H^{1/2}(D,h)} \leq C_1||G||_{H^1(M,h)},
\end{gather*}
which implies:
\begin{gather}
\label{l5}
||g||_{H^{1/2}(D,h)} \leq C_1\inf_{G\in H^1(M,h),~G_{|_D}=g}||G||_{H^1(M,h)};
\end{gather}
Further, we construct a continuation $G'\in H^1(M,h)$ of $g$ with the property that there exists a positive constant $C_2$ such that for every $g\in H^{1/2}(D,h)$ one has: 
\begin{gather}
\label{l6}
||G'||_{H^1(M,h)} \leq C_2||g||_{H^{1/2}(D,h)}.
\end{gather}
Let $\tilde g$ be any continuation of $g$ on $\partial M$ such that $||\tilde g||_{H^{1/2}(N,h)} \leq ||g||_{H^{1/2}(D,h)}$. Therefore, $||\tilde g||_{H^{1/2}(\partial M,h)} \leq \sqrt{2}||g||_{H^{1/2}(D,h)}<\infty$ and $\tilde g\in H^{1/2}(\partial M,h)$. Then we take the harmonic continuation of $\tilde g$ into $M$ as $G'$. By \cite[Proposition 1.7]{Taylor} there exists a positive constant that $C_3$ such that:
$$
||G'||_{H^1(M,h)} \leq C_3||\tilde g||_{H^{1/2}(\partial M,h)}.
$$
Since $||\tilde g||_{H^{1/2}(\partial M,h)} \leq \sqrt{2}||g||_{H^{1/2}(D,h)}$ we get~\eqref{l6} with $C_2=\sqrt{2}C_3$.

Therefore, \eqref{l5} and \eqref{l6} imply:
$$
C_2^{-1}||G'||_{H^1(M,h)} \leq ||g||_{H^{1/2}(D,h)} \leq C_1\inf_{G\in H^1(M,h),~G_{|_D}=g}||G||_{H^1(M,h)},
$$
whence
\begin{gather*}
C_2^{-1}\inf_{G\in H^1(M,h),~G_{|_D}=g}||G||_{H^1(M,h)} \leq ||g||_{H^{1/2}(D,h)} \leq \\ \leq C_1\inf_{G\in H^1(M,h),~G_{|_D}=g}||G||_{H^1(M,h)},
\end{gather*}
since $$||G'||_{H^1(M,h)} \geq\inf_{G\in H^1(M,h),~G_{|_D}=g}||G||_{H^1(M,h)}.$$ And lemma follows.
\end{proof}
Finally, taking the infimum over all $G\in H^1(M,h)$ such that $G_{|_D}=g$ in~\eqref{l4} and using Lemma~\ref{inf} complete the proof.
\end{proof}

\subsection{Proofs of propositions of Section \ref{analysis}.}\label{appendix2}
This section contains the proofs of propositions in section \ref{analysis} analogous to propositions  in \cite[Section 4]{karpukhin2019friedlander} whose adaptation to the Steklov setting is rather technical. 

\begin{proof}[Proof of Lemma \ref{identity2}]
Let $h^m\in[h]$ be a maximizing sequence of metrics for $\sigma^{N*}_k(\Omega, \partial^S\Omega, [h])$ and $g^m\in[g]$ be a discontinuous metric on $\Sigma$ defined as $g|_{\Omega_i} = h_i$. By the variational characterization of eigenvalues for all $k$ one has $\sigma_k(\Sigma,g^m) \geq\sigma^N(\Omega,h^m)$ since the set of test functions for the Steklov-Neumann eigenvalues $C^0(\Sigma,\{\Omega_i\})$ is larger than the set $C^0(\Sigma)$ of test functions for $\sigma_k(\Sigma,g^m)$. Using the fact that $L_{g^m}(\partial \Sigma)=\sum_iL_{h^m}(\partial^S \Omega_i) \geq L_{g^m}(\partial^S \Omega_i)$ for any $i$ and taking the limit as $m\to\infty$ we get
$$
\sigma^*_k(\Sigma,\{\Omega_i\},[g]) \geq\sigma_k^{N*}(\Omega, \partial^S\Omega, [h]).
$$
Finally by Lemma~\ref{identity} one gets
$$
\sigma^*_k(\Sigma,[g]) \geq \sigma^{N*}_k(\Omega,\partial^S\Omega, [h]).
$$
\end{proof}

\begin{proof}[Proof of Proposition \ref{subdomain}]
The proof is similar for both cases. The obvious analog of Lemma~\ref{liminf} for the second case holds since its proof follows the exactly same arguments as the proof of Lemma~\ref{liminf}. For that reason we only provide the proof of Proposition \ref{subdomain} for the first case. 

Take a maximizing sequence of metrics $\{h_i~ |~ h_i \in [g|_{\Omega}]\}$ for the functional  $\sigma^{N*}_k(\Omega, \partial^S\Omega, [g])$, i.e. 

\begin{align*}
\lim_{i\to\infty}\bar{\sigma}^{N}_k(\Omega,\partial^S\Omega, h_i)=\sigma^{N*}_k(\Omega,\partial^S\Omega, [g])
\end{align*}

Let $h_i=f_i g|_{\Omega}$, where $f_i \in C^\infty_+(\bar\Omega)$. We then define the metric $\widetilde{h_i}=\widetilde{f_i} g$ on $\Sigma$, where $\widetilde{f_i}$ is any positive continuation of the function $f_i$ into $\Omega^c$. It enables us to consider the metric $\rho_\delta \widetilde{h_i}$, where as before
 \begin{align*}
\rho_\delta=
 \begin{cases}
 1&\text{in $\Omega$},\\
 \delta&\text{in $\Sigma\setminus\Omega$}.
 \end{cases}
\end{align*}
Lemma \ref{liminf} implies

\begin{align*}
\liminf_{\delta \to 0} \sigma_k(\rho_\delta \widetilde{h_i}) \geq \sigma^N_k(\Omega,\partial^S\Omega, h_i). 
\end{align*}
Moreover, $L_{\rho_\delta\widetilde{h_i}}(\partial \Sigma)\to L_{h_i}(\partial^S\Omega)$.
By Lemma \ref{identity} we have

\begin{align*}
\sigma^*_k(\Sigma, [g])=\sigma^*_k(\Sigma,\{\Omega,\Sigma\setminus\Omega\},[g]) \geq \liminf_{\delta\to 0}\bar\sigma_k(\rho_\delta \widetilde{h_i}) \geq\bar\sigma^N_k(\Omega,\partial^S\Omega, h_i).
\end{align*}
Therefore, passing to the limit as $i \to \infty$ one gets,

\begin{align*}
\sigma^*_k(\Sigma, [g]) \geq \sigma^{N*}_k(\Omega,\partial^S\Omega, [g]).
\end{align*}

\end{proof}

\begin{proof}[Proof of Corollary \ref{Neumann cor2}]
We show that 
$$
\sigma^*_k(M, [g]) \leq \liminf_{n \to \infty}\sigma^{N*}_k(M \setminus K_n, \partial M \setminus \partial K_n, [g]).
$$
Let $g^m$ be a maximizing sequence for the functional $\sigma^*_k(M, [g])$. For a fixed $m$ we consider geodesic balls $B_{\epsilon_n}(p_i)$ of radius $\epsilon_n\to 0$ in metric $g^m$ centred at the points $p_1,\ldots,p_l \in M$ such that $K_n \subset \cup^l_{i=1}B_{\epsilon_n}(p_i)$. We see that $M\setminus \cup^l_{i=1}B_{\epsilon_n}(p_i) \subset M\setminus K_n$. Then by Proposition~\ref{subdomain} one has
\begin{gather}\label{420}
\sigma^{N*}_k(M \setminus K_n, \partial M \setminus \partial K_n, [g]) \geq \\ \geq\sigma^{N*}_k(M\setminus  \cup^l_{i=1}B_{\epsilon_n}(p_i), \partial M\setminus \cup^l_{i=1}\partial B_{\epsilon_n}(p_i), [g]) \geq \\ \geq
\bar\sigma^{N}_k(M\setminus  \cup^l_{i=1}B_{\epsilon_n}(p_i), \partial M\setminus \cup^l_{i=1}\partial B_{\epsilon_n}(p_i), g^m).
\end{gather}
Note that $L(\partial M \setminus \cup^l_{i=1}\partial B_{\epsilon_n}(p_i), g^m)\to L(\partial M, g^m)$ as $n\to\infty$ and by Lemma~\ref{Neumann conv} one has $\sigma^{N}_k(M\setminus  \cup^l_{i=1}B_{\epsilon_n}(p_i), \partial M\setminus \cup^l_{i=1}\partial B_{\epsilon_n}(p_i), g^m)\to\sigma_k(M,g^m)$. Hence, $\bar\sigma^{N}_k(M\setminus  \cup^l_{i=1}B_{\epsilon_n}(p_i), \partial M\setminus \cup^l_{i=1}\partial B_{\epsilon_n}(p_i), g^m) \to \bar\sigma_k(M,g^m)$ as $n\to\infty$. 
Taking $\liminf_{n\to\infty}$ in~\eqref{420} one then gets
$$
\liminf_{n\to\infty}\sigma^{N*}_k(M \setminus K_n, \partial M \setminus \partial K_n, [g]) \geq \bar\sigma_k(M,g^m).
$$
Passing to the limit as $m\to\infty$ we get the desired inequality.

The inequality
$$
\limsup_{n \to \infty}\sigma^{N*}_k(M \setminus K_n, \partial M \setminus \partial K_n, [g]) \leq \sigma^*_k(M, [g])
$$
follows from Proposition~\ref{subdomain}. This completes the proof.
\end{proof}

\begin{proof}[Proof of Lemma~\ref{disconnected}]
Essentially the idea of the proof comes from the paper \cite{WK94}. We denote by $\partial^S\Omega$ the part of the boundary with the Steklov boundary condition. We also call $\partial^S\Omega$ "Steklov boundary" and $L_g(\partial^S\Omega)$ "the length of Steklov boundary" in metric $g$. 

{\bf Inequality $\geq$.} 

Fix the indices $k_i>0$ satisfying $\sum k_i=k$ and consider a maximizing sequence of metrics $\{g_i^m\}$ such that  $\bar\sigma^N_{k_i}(\Omega_i,\partial^S\Omega_i, g^m_i)\to\sigma^{N*}_{k_i}(\Omega_i,\partial^S\Omega_i, [g_i])$. One can assume that $\sigma^N_{k_i}(\Omega_i,\partial^S\Omega_i, g^m_i)=\sigma^{N*}_k(\Omega,\partial^S\Omega, [g])$. Then, one has
$$
L_{g^m_i}(\partial^S\Omega_i)\to \frac{\sigma^{N*}_{k_i}(\Omega_i,\partial^S\Omega_i, [g_i])}{\sigma^{N*}_{k}(\Omega,\partial^S\Omega, [g])}
$$

Let $\{g^m\}$ be a sequence of metrics on $\Omega$ defined as $g^m|_{\Omega_i}=g^m_i$. Then for large enough $m$ one has that $\sigma^N_k(\Omega,\partial^S\Omega, g^m) = \sigma^{N*}_k(\Omega,\partial^S\Omega, [g])$, since the spectrum of disjoint union is the union of spectra of each component. By definition of $\sigma^{N*}_k(\Omega, \partial^S\Omega, [g])$ we also have
$$
\sigma^{N*}_k(\Omega,\partial^S\Omega, [g])L_{g^m}(\partial^S\Omega)=\sigma^N_k(\Omega,\partial^S\Omega, g^m)L_{g^m}(\partial^S\Omega) \leq \sigma^{N*}_{k}(\Omega,\partial^S\Omega, [g]),
$$
i.e. $L_{g^m}(\partial^S\Omega) \leq 1$. Thus, one has
$$
1 \geq L_{g^m}(\partial^S\Omega) = \sum_iL_{g^m_i}(\partial^S\Omega_i)\to \frac{\sum_i \sigma^{N*}_{k_i}(\Omega_i,\partial^S\Omega_i, [g_i])}{\sigma^{N*}_{k}(\Omega,\partial^S\Omega, [g])}.
$$
Passing to the limit $m\to\infty$ yields the inequality.

{\bf Inequality $\leq$.} 

Assume the contrary, i.e. 
\begin{equation}
\label{assumption}
\sigma^{N*}_k(\Omega, \partial^S\Omega, [g]) > \max_{\sum\limits_{i=1}^s k_i=k,\,\,\,k_i>0}\,\,\sum_{i=1}^s\sigma^{N*}_{k_i}(\Omega_i, \partial^S\Omega_i, [g_i]).
\end{equation}
Consider a maximizing sequence of metrics $\{g^m\}$ of unit total length of Steklov boundary such that $\sigma^N_{k}(\Omega,\partial^S\Omega, g^m)\to\sigma^{N*}_{k}(\Omega,\partial^S\Omega, [g])$. Let $g_i^m$ be a restriction of $g^m$ to $\Omega_i$ and $d^m_i$ be the largest number satisfying $\sigma^N_{d^m_i}(\Omega_i,\partial^S\Omega_i, g_i^m)<\sigma^{N*}_k(\Omega,\partial^S\Omega,[g])$ and $\limsup_{m\to\infty}\sigma^N_{d^m_i}(\Omega_i, \partial^S\Omega_i, g_i^m)<\sigma^{N*}_k(\Omega,\partial^S\Omega, [g])$. Let $L^m_i$ denote $L_{g^m_i}(\partial^S\Omega_i)$.
Then we have  $d^m_i \leq k$ and $L^m_i \leq 1$.
Therefore, up to a choice of a subsequence one can assume that $d^m_i = d_i$ does not depend on $m$ and $L^m_i\to L_i$ as $m\to\infty$.

We claim that $\sum_i(d_i+1) \geq k+1$. Otherwise, by~\eqref{assumption} and definition of $d_i$ we have
\begin{gather*}
\sigma^{N*}_k(\Omega,\partial^S\Omega, [g])\sum_i L_i \leq\sum_i\limsup_{m\to\infty}\bar\sigma^N_{d_i+1}(\Omega_i,\partial^S\Omega_i, g^m_i) \leq\\ \leq\sum_i\sigma^{N*}_{d_i+1}(\Omega_i,\partial^S\Omega_i, [g])<\sigma^{N*}_k(\Omega,\partial^S\Omega, [g]).
\end{gather*}
Moreover,  $\sum_i L_i=1$ since $g^m$ are of unit Steklov boundary length. 
Thus, we arrive at $\sigma^{N*}_k(\Omega, \partial^S\Omega, [g])<\sigma^{N*}_k(\Omega,\partial^S\Omega, [g]),$ which is a contradiction.

Therefore, the inequality $\sum(d_i+1) \geq k+1$ holds. Since the spectrum of a union is a union of spectra, we have $$\sigma^N_k(\Omega, \partial^S\Omega, g^m)\in\bigcup_i\{\sigma_0(\Omega_i,g^m_i),\ldots,\sigma_{d_i}(\Omega_i,g^m_i)\},$$ hence 
\begin{gather*}
\sigma^{N*}_k(\Omega,\partial^S\Omega, g)=\limsup_{m\to\infty}\sigma^N_k(\Omega, \partial^S\Omega, g^m) \leq\max_i\limsup_{m\to\infty}\sigma_{d_i}(\Omega_i,g^m_i)<\\<\sigma^{N*}_k(\Omega,\partial^S\Omega, [g]).
\end{gather*}
Since $g^m$ are of unit Steklov boundary length we arrive at a contradiction.
\end{proof}

\begin{proof}[Proof of Lemma~\ref{omega_i}]
Fix indices $k_i \geq 0$ such that $\sum_{i=1}^{s'} k_i=k$ and set $I = \{i\,|\,k_i>0\}$. Let $\Omega_1 = \cup_{i\in I}\overline\Omega_i\subset \Sigma,~\partial^S\Omega_1 = \cup_{i\in I}\partial^S\Omega_i,~(\Omega_2,h) = \sqcup_{i\in I}(\overline\Omega_i,g_{\overline\Omega_i})$ and $\partial^S\Omega_2 = \sqcup_{i\in I}\partial^S\Omega_i$. One gets
\begin{gather*}
\sigma^*_k(\Sigma,[g]) \geq \sigma^{N*}_k(\Omega_1,\partial^S\Omega_1, [g]) \geq \sigma^{N*}_k(\Omega_2, \partial^S\Omega_2, [h]) \geq \\ \geq\sum_{i\in I} \sigma^{N*}_{k_i}(\Omega_i,\partial^S\Omega_i, [g])=\sum^{s'}_{i=1} \sigma^{N*}_{k_i}(\Omega_i,\partial^S\Omega_i, [g]),
\end{gather*}
where we used  in order: Proposition~\ref{subdomain}, Lemma~\ref{identity2} and Lemma~\ref{disconnected} and the fact that $\sigma^{N*}_0(\Omega_j,\partial^S\Omega_j, [g])=0$ for any $j$ in the last equality.
\end{proof}

\subsection{Proof of Lemma \ref{post2}.}\label{appendix3}
Fix $\varepsilon>0$. An application of Corollary~\ref{Neumann cor2} to a compact exhaustion of $\Sigma^\infty_j$ yields the existence of a compact set $K\subset \Sigma^\infty_j\subset \widehat{\Sigma_j^\infty}$ such that 
$$
|\sigma^*_r(\widehat{\Sigma_j^\infty},[\widehat{h_\infty}]) - \sigma^{N*}_r(K, \partial^SK, [\widehat{h_\infty}])|<\varepsilon,
$$  
where $\partial^SK=K\cap \partial\Sigma^\infty_j \neq \O$. Since $\check\Omega_j^n$ exhaust $\Sigma^\infty_j$, then for all large enough $n$ one has $K\subset\check\Omega_j^n$.
Then, by Proposition~\ref{subdomain}
$$
\sigma^{N*}_{r}(\check\Omega_j^n,\partial^S\check\Omega_j^n, [(\Psi^n)^*h_n]) \geq \sigma^{N*}_{r}(K, \partial^SK,[(\Psi^n)^*h_n]).
$$
Taking $\liminf$ of both sides in the above inequality and using Proposition~\ref{N-cont} yields
$$
\liminf_{n\to\infty}\sigma^{N*}_{r}(\check\Omega_j^n,\partial^S\check\Omega_j^n, [(\Psi^n)^*h_n]) \geq \sigma^{N*}_{r}(K,\partial^SK, [\widehat{h_\infty}])> \sigma^*_r(\widehat{\Sigma_j^\infty},[\widehat{h_\infty}])-\varepsilon.
$$
Since $\varepsilon$ is arbitrary, this completes the proof.

\bibliography{mybib}
\bibliographystyle{alpha}

\end{document}